\theoremstyle{plain}
\newtheorem{theorem}{Theorem}[section]
\newtheorem{lemma}[theorem]{Lemma}
\newtheorem{corollary}[theorem]{Corollary}
\theoremstyle{definition}
\newtheorem{definition}{Definition}[section]
\theoremstyle{remark}
\newtheorem*{remark}{Remark}
\renewcommand{\d}[1]{\ensuremath{\operatorname{d}\!{#1}}}
\title{Arbitrarily High-order Trapezoidal Rules for Functions with Fractional Singularities in Two Dimensions}
\author[1]{Senbao Jiang}
\author[1]{Xiaofan Li}
\affil[1]{Department of Applied Mathematics, Illinois Institute of Technology}
\date{\today}
\begin{document}

\maketitle

\begin{abstract}
  In this paper, we introduce and analyze arbitrarily high-order quadrature rules for evaluating the two-dimensional singular integrals of the forms \begin{align*}
      I_{i,j} = \int_{\mathbb{R}^2}\phi(x)\frac{x_ix_j}{|x|^{2+\alpha}} \d x, \quad 0< \alpha < 2
  \end{align*} where $i,j\in\{1,2\}$ and $\phi\in C_c^N$ for $N\geq 2$. This type of singular integrals and its quadrature rule appear in the numerical discretization of fractional Laplacian in non-local Fokker-Planck Equations in 2D. The quadrature rules are trapezoidal rules equipped with correction weights for points around singularity. We prove the order of convergence is $2p+4-\alpha$, where $p\in\mathbb{N}_{0}$ is associated with total number of correction weights. Although we work in 2D setting, we formulate definitions and theorems in $n\in\mathbb{N}$ dimensions when appropriate for the sake of generality. We present numerical experiments to validate the order of convergence of the proposed modified quadrature rules.
\end{abstract}

\section{Introduction}
In this article we design and analyze arbitrarily high-order quadrature rules for a class of weakly singular integrals in two dimensions, with an isolated singularity at the origin. Such weakly singular integrals arises naturally in non-local partial differential equations involving the fractional Laplacian. 

There are different methods of numerical integration for weakly singular integrals, such as singularity subtraction and singularity removing by change of co-ordinates, see for example \cite{10.2307/2157208} \cite{Bruno2001AFH} \cite{Atkinson2004QUADRATUREOS} \cite{1514571} \cite{Sidi2005ApplicationOC} \cite{Mousavi2009GeneralizedDT}. The approach we use here is to modify the trapezoidal rule around singularity in order to achieve high-order convergence rate. This method is efficient as it introduces correction weights for the trapezoidal rule near the singularity. The correction weights are pre-computed. Once being stored, the modified trapezoidal rule can be easily set up. It is well-known that the trapezoidal rule is spectrally accurate for smooth functions with compact support or periodic functions, while it is second order accurate for other functions. There are two main sources of errors: one comes from singularities and the other is from the boundaries. We focus on the error caused by singularity in this paper.

Rokhlin \cite{ROKHLIN199051} was first to propose singularity-corrected trapezoidal rule and Alpert \cite{Alpert1990RapidlyConvergentQF}, Kapur and Rokhlin \cite{doi:10.1137/S0036142995287847} further improves the method. Their quadrature rules are designed for functions of the form $f(x) = \phi(x) s(x) + \Psi(x)$ or $f(x) = \phi(x) s(x)$ in 1D, where $\phi(x),\Psi(x)$ are regular functions and $s(x)$ is a singular function with isolated singularity such as $s(x) = |x|^{\gamma},\gamma>-1$ or $s(x) = \log(x)$. Aguilar and Chen \cite{AGUILAR20021031, AGUILAR2005625} designed singularity-boundary-corrected trapezoidal rule for singular kernel $s(x) = \log(x)$ in 2D and $s(x) = \sfrac{1}{|x|}$ in 3D.

The method we used for constructing increasingly accurate modified trapezoidal rule is to require the modified rule exactly evaluate the integral of the product of some monomials with increasing degree and the singular kernel. This gives rise to a linear system of equations for the correction weights. The more weights one is using, the larger the linear system is. In general, the weights obtained from solving the linear systems depend on the mesh size $h$, as in Keast and Lyness \cite{Keast1979OnTS}, Duan and Rokhlin \cite{doi:10.1137/S0036142995287847}. In the latter paper, the authors are able to derive analytical formula for the weights. Analytic formula are not feasible for the weights relying on mesh size, especially when the number of weights becomes large. In studying weakly singular integral involving singular kernels of the forms $s(x) = |x|^\gamma,\gamma>-1$ in 1D and $s(x) = \sfrac{1}{|x|}$ in 2D, Marin \textit{et al} \cite{MarinTornberg2014} designed a modified trapezoidal rule and managed to obtain the weights independent of the mesh size, by letting $h\to 0$. Those weights can then be pre-computed and stored. They provided convergence analysis for the weights in 1D and the numerical evidence of the convergence in 2D. They also gave a rigorous proof for the rate of convergence of the quadrature rule in 1D and conditional proof for the two dimensional counterpart. In fact, the difficulty for proving the linear systems of the correction weights has a well-defined solution when $h\to 0$ lies in showing the linear systems are non-singular as the size of the systems varies with the number of the correction weights. The same authors also applied the quadrature rule to study the fundamental solution of the Stokes equations in \cite{MARIN2012215}. 

In \cite{HansenHa}, Ha investigates numerically non-local Fokker-Planck equation in two dimensions  \begin{align}
    u_t(x,t) & = -\partial_1(b_1(x)u(x,t)) - \partial_2(b_2(x)u(x,t)) + \frac{1}{2}\sigma_{11}^2\partial_1^2u(x,t) + \frac{1}{2}\sigma_{22}^2\partial_2^2u(x,t) \\
    & + C_{2,\alpha}\int_{\mathbb{R}^2\setminus\{0\}}\frac{u(x,t) - u(x+y,t)}{|y|^{2+\alpha}}\d y, \ (t,x)\in [0,1]\times \Omega\\
    u(x,t) & \equiv 0, \ x\not \in \Omega.
\end{align} Here, 
\begin{align}
(-\Delta)^{\frac{\alpha}{2}}u(x) &\coloneqq C_{2, \alpha}\int_{\mathbb{R}^2\setminus\{0\}}\frac{u(x) - u(x+y)}{|y|^{2+\alpha}}\d y, \quad \alpha\in(0,2), \\
\end{align} is the fractional Laplacian and \begin{align}
    C_{d,\alpha} &\coloneqq \frac{2^{\alpha}\Gamma(\frac{\alpha+d}{2})}{\pi^{\frac{d}{2}}|\Gamma(-\frac{\alpha}{2})|},\ d\in\mathbb{N}.
\end{align} Numerical discretization of fractional Laplacian boils down to designing accurate quadrature rules for the weakly singular integrals $I_{i,j} = \int_{\mathbb{R}^2}\phi(x) \frac{x_ix_j}{|x|^{2+\alpha}}\d x$ , where $\phi$ is a regular function and $i,j\in\{1,2\}$. Ha \cite{HansenHa} adapted the simplest form of the modified trapezoidal rule of Marin et al \cite{MarinTornberg2014} for the issue. The quadrature rule is effective in evaluating the fractional Laplacian, however there is no rigorous convergence analysis or comprehensive numerical investigation for the rule itself. For more details please refer to \cite{HansenHa}. For a comprehensive review of fractional Laplacian please refer to \cite{LISCHKE2020109009} and references therein.

In this work, we propose the complete version of the modified trapezoidal rule for the two dimensional weakly singular integrals of the forms:\begin{align} \label{singularint}
    I_{i,j} &= \int_{\mathbb{R}^2}\phi(x) \frac{x_ix_j}{|x|^{2+\alpha}}\d x,\quad \alpha\in(0,2).
\end{align} Here, $\phi(x)$ is a function with compact support satisfying some smoothness criteria and $i,j\in\{1,2\}$. We call $I_{11},I_{22}$ the \emph{on-diagonal} weakly singular integrals and $I_{12},I_{21}$ the \emph{off-diagonal} weakly singular integrals. We provide theoretical analysis of the proposed quadrature rules and verify them by numerical results. We prove the quadrature rule has the order of convergence $2p+4-\alpha,\ p\geq 0$ for on-diagonal case and $2p+2-\alpha,\ p\geq 2$ for off-diagonal case, where $p$ is associated with total number of correction weights. Our correction weights are independent with the mesh size. We are able to show the linear systems of the correction weights are non-singular for arbitrary $p$. By using our method one could give unconditional affirmation to the conditional proof of convergence rate in \cite{MarinTornberg2014}. In the proof of the order of convergence of our quadrature rule, we generalize the ideas used in the proofs in \cite{MarinTornberg2014}. 

We organize the paper as follows. In \cref{sectionCTR}, we introduce the modified trapezoidal rules for on/off-diagonal weakly singular integrals and their associated matrix formulations for determining correction weights. We prove some preliminary results and the main theorems in \cref{sectionTA}. We address the technical non-singularity problem arising from matrix formulation of section \cref{sectionCTR} in \cref{sectionNSCM}. In final \cref{sectionNR} numerical results for order of convergence and associated correction weights are presented for both cases. In our presentation, we elaborate on on-diagonal case and outline those off-diagonal counterparts.

\section{The Corrected Trapezoidal Rules} \label{sectionCTR}

\subsection{Notations}
Through-out this paper, we use $h$ for mesh size, lowercase letters such as $x,y,z$ for either scalar or vector and bold uppercase letters such as $\boldsymbol{A},\boldsymbol{B}$ for matrices. The Euclidean norms $|x|\coloneqq \sqrt{\sum_{i = 1}^n x_i^2}$ and $|x|_1:=\sum_{i=1}^n|x_i|$. The natural number with zero $\mathbb{N}_0\coloneqq \{0,1,2,\cdots\}$.
Multi-index notations appeared frequently in this paper. In particular, we use $p\leq q$ for $p = (p_1,\cdots,p_n),\ q = (q_1,\cdots,q_n)$ if and only if $p_i\leq q_i$ for all $i$ and $x^{\gamma} \coloneqq \prod_j x_j^{\gamma_j}$ for any multi-index $\gamma = (\gamma_1,\cdots,\gamma_n)$. 
\subsection{On-diagonal Corrected Trapezoidal Rule}
For any compactly supported function $f$ on $\mathbb{R}^n$, the punctured-hole trapezoidal rule is defined by \begin{align}\label{Punctured_Hole_Trapez}
    T_h^0[f]\coloneqq h^n\sum_{\beta\in\mathbb{Z}^n\setminus\{0\}} f(\beta h).
\end{align} Let $f=\phi\cdot s$, where $\phi\in C_c(\mathbb{R}^n)$ and $s$ is a singular function of the form \begin{align}\label{SingularFunctionGeneral}
    s(x)\coloneqq \frac{x_1^2}{|x|^{n+\alpha}}, \quad \forall x\in\mathbb{R}^n,\ \text{with}\ 0<\alpha<2.
\end{align} We have chosen $s$ to be associated with $x_1$, the results associated with this $s$ can be easily extended to other $x_i$'s. We call such $s$ on-diagonal singular kernel. Let $p\in\mathbb{N}_0$, we introduce a corrected trapezoidal rule in $\mathbb{R}^2$ \begin{align}\label{FirstCorrectedTrapezRule}
Q_h^p[f]\coloneqq T_h^0[f]+a(h)\sum_{\beta\in\mathcal{L}_p}\bar{\omega}_{\beta}\phi(\beta h),
\end{align} where $\bar{\omega}_{\beta}$'s are the correction weights to be defined, $\mathcal{L}_p = \{\beta\in\mathbb{Z}^2:|\beta|_1\leq p\}$. $a(h)$ is the leading error term between the singular integral and punctured-hole trapezoidal rule and it turns out to be $h^{2-\alpha}$. To see this, we can roughly evaluate \begin{align*}
    \int_{(-h,h)^2}|\phi(x)|s(x)\ \d x &\sim \int_{B(0,h)}|\phi(x)|s(x)\ \d x \\
    & \lesssim \int_{B(0,h)}s(x)\ \d x \lesssim \int_0^h r^{1-\alpha}\ \d r \sim h^{2-\alpha},
\end{align*} where $\sim$ means on the same order as $h\to 0$. In \cref{sectionTA} we will show rigorously that $a(h)=h^{2-\alpha}$.

We now elaborate on how to determine the correction weights $\bar{\omega}_{\beta}$. For each $h>0$ we require the corrected trapezoidal rule $Q_h^p$ in \cref{FirstCorrectedTrapezRule} with weights $\omega_{\beta}(h)$ evaluate the following integrals exactly \begin{align}\label{FirstCoeffLinearSystem}
    \int_{\mathbb{R}^2} g(x)s(x)x^{2\xi}\d x &=   T_h^0[g\cdot s\cdot x^{2\xi}]  \nonumber \\ 
     & +a(h)\sum_{\beta\in\mathcal{L}_p}\omega_{\beta}(h)(\beta h)^{2\xi}g(\beta h), \quad \forall \xi\in\mathbb{N}_0^2:|\xi|_1\leq p,
\end{align} where $g$ is a radially symmetric, smooth function with compact support such that $g(x) \not \equiv 0$. By symmetry of the integrand $g\cdot s$, we impose same symmetry on weights \begin{align}
    \bar{\omega}_{\beta_1,\beta_2} = \bar{\omega}_{|\beta_1|,|\beta_2|} , \quad \forall\beta = (\beta_1,\beta_2)\in \mathcal{L}_p.
\end{align} \Cref{fig:On_diag_grid} illustrates the grid of the correction weights for $p = 6$ after imposing symmetry. We say that the grid of the on-diagonal correction weights for $p = N, N\in \mathbb{N}_0$ is made up of $N+1$ correction layers, each layer is the set of points $\beta\in \mathbb{N}^2$ such that $|\beta|_1 = k,\ k = 0,\cdots,N$.
\begin{figure}[h]
    \centering
    \includegraphics[width = 10cm]{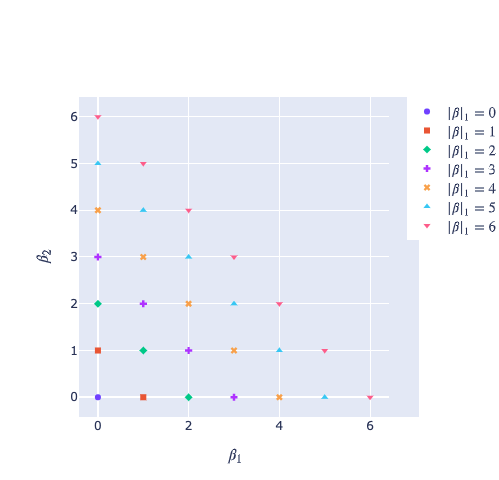}
    \caption{The location of the on-diagonal grid of correction weights after imposing symmetry when $p = 6$.}
    \label{fig:On_diag_grid}
\end{figure}

To facilitate further analysis, we define \begin{align}
    \mathcal{I}_p &\coloneqq \{(q-m,m):0\leq q\leq p,\ 0\leq m\leq q\}, \label{Ip_on} \\ 
    \mathcal{G}_{q-m}^m & \coloneqq \{(\pm (q-m),\pm m),(\mp (q-m),\pm m)\}, \quad \text{for each }(q-m,m)\in\mathcal{I}_p,\\
     \mathcal{G} & \coloneqq \{\mathcal{G}_{q-m}^m:(q-m,m)\in\mathcal{I}_p\}.
    \end{align} It is clear that $\mathcal{I}_p = \{\xi\in\mathbb{N}_0^2:|\xi|_1\leq p\}$ and $|\mathcal{G}| =|\mathcal{I}_p|=N_p$, where $N_p \coloneqq \frac{(p+1)(p+2)}{2}$. We occasionally use $\mathcal{G}_\gamma$ to denote $\mathcal{G}_{q-m}^m$ where $\gamma=(q-m,m)\in\mathcal{I}_p$ for the sake of notational convenience. We write the system of equations \cref{FirstCoeffLinearSystem} as $N_p\times N_p$ linear system for the weights $\omega_{\gamma}$ \begin{align} \label{RawLinearSystem}
        a(h)\sum_{\gamma\in\mathcal{I}_p}\omega_{\gamma}(h)\sum_{\beta\in\mathcal{G}_{\gamma}}(\beta h)^{2\xi}g(\beta h)= \int_{\mathbb{R}^2} g(x)s(x)x^{2\xi}\, \d x - T_h^0[g\cdot s\cdot x^{2\xi}], \quad \xi\in\mathcal{I}_p.
\end{align} and re-write \cref{FirstCorrectedTrapezRule} into \begin{align} \label{SecondCorrectedTrapezRule}
    Q_h^p[f] = T_h^0[f] + A_h^p[\phi]
\end{align}where \begin{align} \label{A^p}
    A^p_h[\phi]  = a(h)\sum_{q=0}^p\sum_{m=0}^{q}\bar{\omega}^m_{q-m}\sum_{\beta \in \mathcal{G}^m_{q-m}}\phi(\beta h) = a(h)\sum_{\gamma\in\mathcal{I}_p}\bar{\omega}_\gamma\sum_{\beta\in\mathcal{G}_\gamma}\phi(\beta h). 
\end{align} 
To formulate \cref{RawLinearSystem} in matrix form, we re-index the set $\mathcal{I}_p=(\gamma_i)_{1\leq i\leq N_p}$. There are more than one way to index the set $\mathcal{I}_p$ and the indexing plays an important role in proving the linear system has unique solution. We specify the indexing later. By one-to-one correspondence between $\mathcal{G}$ and $\mathcal{I}_p$, we index $\mathcal{G}$  by $(\mathcal{G}_j\coloneqq \mathcal{G}_{\gamma_j})_{1\leq j\leq N_p}$, for each $\gamma_j\in\mathcal{I}_p, 1\leq j\leq N_p$. We denote $\boldsymbol{K},\boldsymbol{G}\in\mathbb{R}^{N_p\times N_p}$ such that for all $1\leq i,j\leq N_p$ \begin{align}\label{MatrixK}
    K_{i,j} = \sum_{\beta\in\mathcal{G}_j}\beta^{2\xi_i},\ G_{i,j} = g(\beta h)\delta_{i,j},\ \beta\in\mathcal{G}_j.
\end{align} We call $\boldsymbol{K}$ the coefficient matrix. We note that by radial symmetry, $g(\beta h)$ is constant over each $\mathcal{G}_j$, thus the choice of $\beta$ within $\mathcal{G}_j$ is arbitrary. Now, let \begin{align}
     \omega(h) = (\omega_1(h),\dots,\omega_{N_p}(h))^T,
\end{align} the linear system \cref{RawLinearSystem} becomes \begin{align}\label{MatrixLinearSystem}
    \boldsymbol{K}\boldsymbol{G}(h)\omega(h) = c(h),
\end{align} where the right-hand side of \cref{MatrixLinearSystem} \begin{align}
    c_i(h) &= \frac{1}{h^{2|\xi_i|_1}a(h)}\left(\int g(x)s(x)x^{2\xi_i}\d x - T_h^0[g\cdot s\cdot x^{2\xi_i}]\right),\quad 1\leq i\leq N_p, \label{RHSofLinearSystem} \\
    c(h) &= (c_1(h),\dots,c_{N_p}(h))^T.
\end{align} We solve the linear system \cref{MatrixLinearSystem} for each $h>0$ and the coefficients $\bar{\omega}^m_{q-m}:(q-m,m)\in\mathcal{I}_p$ are the limits of the solution of the linear system \cref{MatrixLinearSystem}: $\bar{\omega}=\lim_{h\to 0}\omega(h)$, provided the following claims hold \begin{itemize}
    \item the limit of right-hand side of equation \cref{RHSofLinearSystem} exists,
    \item the limit of $\boldsymbol{G}(h)$ exists, denoted by $\boldsymbol{G}$,
    \item matrices $\boldsymbol{K}$ and $\boldsymbol{G}$ are non-singular.
\end{itemize} The first two claims are answered in next section while the non-singularity of $\boldsymbol{K}$ is addressed in section \cref{sectionNSCM}.

\subsection{Off-diagonal Corrected Trapezoidal Rule}
In this subsection, we construct corrected trapezoidal rule for the off-diagonal singular kernel. In $n$ dimensions, the off-diagonal singular kernel is defined by \begin{align}\label{eq:off singular kernel}
    s_{12}(x) = \frac{x_1x_2}{|x|^{n+\alpha}},\ 0<\alpha<2.
\end{align} In the absence of ambiguity, we recycle the notations used for the on-diagonal case. Due to similarity with on-diagonal case, we will simplify the description for off-diagonal case.

In two dimensions, we assign a correction weight $\omega_\beta$ on each grid points $\beta\coloneqq(\beta_1,\beta_2)\in\mathbb{Z}^2$ such that $\beta_1\beta_2\not = 0$. Imposing symmetry of $s_{12}$ on correction weights, we have \begin{align}
    \bar{\omega}_{\beta_1,\beta_2}  = \text{sgn}\{\beta_1\beta_2\}\bar{\omega}_{|\beta_1|,|\beta_2|},\quad \bar{\omega}_{\beta_1,\beta_2}  = \bar{\omega}_{\beta_2,\beta_1}.
\end{align} \Cref{fig:Off_diag_grid} illustrates the grid of the correction weights for $p = 6$ after imposing symmetry. We say that the grid of the off-diagonal correction weights for $p = N, N\geq 2$ is made up of $N-1$ correction layers, each layer is the set of points $\beta\in \mathbb{N}^2$ such that $|\beta|_1 = k,\ k = 2,\cdots,N$ and $\beta_1\geq\beta_2$.
\begin{figure}[h]
    \centering
    \includegraphics[width = 10cm]{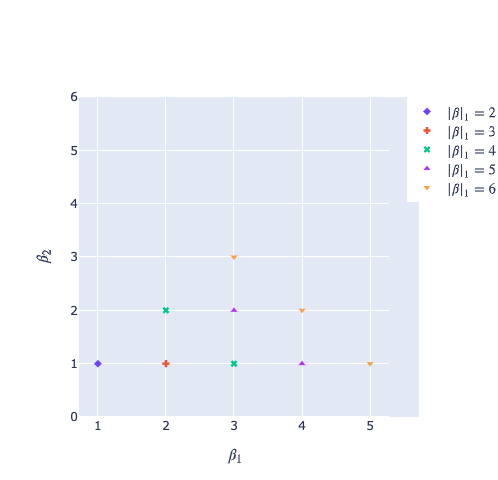}
    \caption{The location of the off-diagonal grid of correction weights after imposing symmetry when $p = 6$.}
    \label{fig:Off_diag_grid}
\end{figure}

For all $p\in\mathbb{N}, p\geq 2$, we denote \begin{align}
    \mathcal{G}_{q-m,m}  = & \big\{ (\pm(q-m),\pm m),(\mp(q-m),\pm m), \nonumber \\
    & (\pm m,\pm(q-m)),(\mp m,\pm(q-m))\big\},\ 2\leq q\leq p,\ 1\leq m \leq \left\lfloor\frac{q}{2}\right\rfloor, \label{eq:G_off} \\
    \mathcal{I}_p  = & \left\{(q-m,m):2\leq q\leq p, 1\leq m\leq \left\lfloor\frac{q}{2}\right\rfloor \right\}. \label{I_p off}
    \end{align} We define the modified trapezoidal rule for a regular function $\phi$ in $\mathbb{R}^2$ as \begin{align}
    Q_h^1[\phi\cdot s_{12}] & \coloneqq T_h^0[\phi\cdot s_{12}], \nonumber \\
    Q_{h}^p[\phi\cdot s_{12}] &\coloneqq T_h^0[\phi\cdot s_{12}] + h^{2-\alpha}\sum_{(q-m,m)\in\mathcal{I}_p}\bar{\omega}_{q-m,m} \sum_{\beta\in\mathcal{G}_{q-m,m}}\text{sgn}(\beta_1\beta_2)\phi(\beta h), \nonumber \\
    & \eqqcolon T_h^0[\phi\cdot s_{12}] + A_h^p[\phi],\ \text{if } p \geq 2. \label{CorrectedTrapz_off_diag}
\end{align} where $T_h^0[f]$ means punctured-hole trapezoidal rule applied on function $f$ defined at \cref{Punctured_Hole_Trapez}. To determine the correction weights, we introduce the linear system: for each $\xi\in\mathcal{I}_p$ \begin{multline}\label{LinearSysOffDiagonal_firstform}
     h^{2-\alpha}\sum_{(q-m,m)\in\mathcal{I}_p}\omega_{q-m,m}(h)\sum_{\beta\in\mathcal{G}_{q-m,m}}\text{sgn}(\beta_1\beta_2)(\beta h)^{2\xi-e_1-e_2}g(\beta h)  \\
    = \int_{\mathbb{R}^2\setminus\{0\}}g(x)s_{12}(x)x^{2\xi-e_1-e_2}\ \d x - T_h^0[g\cdot s_{12}\cdot x^{2\xi-e_1-e_2}],
\end{multline} where $g$ is a smooth, radially symmetric, compactly supported function on $\mathbb{R}^2$. We index $\mathcal{I}_p = (\gamma_j)_{1\leq j\leq N_p}, N_p:=|\mathcal{I}_p|$, which will be specified later. Then by using the same indexing we can write $\{\mathcal{G}_{q-m,m}:2\leq q\leq p, 1\leq m\leq \lfloor\frac{q}{2}\rfloor\} = \{\mathcal{G}_{\gamma_j}\}_j$. In this case the matrix $\boldsymbol{K}\in\mathbb{R}^{N_p\times N_p}$ is re-defined as\begin{align}
    K_{i,j} = \sum_{\beta\in\mathcal{G}_{\gamma_j}}\text{sgn}(\beta_1\beta_2)\beta^{2\xi_i-e_1-e_2}, \label{MatrixK_12}
\end{align} For the off-diagonal kernel $s_{12}$, the linear system \cref{MatrixLinearSystem} has a new $\boldsymbol{K}$ and $c(h)$ \begin{align}
    \boldsymbol{K}\boldsymbol{G}(h)\omega(h) &= c(h), \label{LinearSysOff_Second} \\
    c_j(h) &= \frac{1}{h^{2|\xi_j|_1-\alpha}}\left(\int g\cdot s_{12}\cdot x^{2\xi_j-e_1-e_2}\ \d x - T_h^0[g\cdot s_{12}\cdot x^{2\xi_j-e_1-e_2}]\right).\label{eq:c_h off}
\end{align} In \cref{sectionTA,,sectionNSCM} we will address the existence of the limit $\lim_{h\to 0}c(h)$, the convergence of $\lim_{h\to 0}\omega(h)$ and non-singularity of coefficient matrix, respectively.

\section{The Analysis of Orders of Convergence} \label{sectionTA}
\subsection{On-diagonal Analysis}
In this subsection, we show the convergence order of corrected trapezoidal rule $Q^p_h$ for on-diagonal singular integral in \cref{SecondCorrectedTrapezRule} is $2p+4-\alpha$ via the following main theorem:
\begin{theorem}\label{MainTheorem}
Let $p\in\mathbb{N}_0$. Given $\phi\in C_c^{N}(\mathbb{R}^2)$ such that $N\geq 2p+4$ and the on-diagonal singular kernel $s(x) = \frac{x_1^2}{|x|^{2+\alpha}},\ x\in\mathbb{R}^2$, $0<\alpha<2$. Denote $f \coloneqq \phi \cdot s$, let $g\in C_c^{\infty}(\mathbb{R}^2)$ such that $g$ is radially symmetric, $g(0)=1$ and $\partial^{k} g(0)=0$ for all $|k|_1 =1,\dots, 2p+1$. Let $\omega(h)$ be the solution of \cref{MatrixLinearSystem} for this $g$, then $\omega(h)$ converges to some $\bar{\omega}\in\mathbb{R}^{N_p}$ such that \begin{align}  
    |\omega(h)-\bar{\omega}|_2= O(h^{2p+2}).
\end{align} Moreover, there exists $C>0$ such that \begin{align} \label{AccuracyofCorrectedTrapezRule}
    \left|Q_h^p[f] - \int_{\mathbb{R}^2}f(x)\ \d x\right|\leq C h^{2p+4-\alpha}.
\end{align}
\end{theorem}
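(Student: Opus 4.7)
The plan is to combine a Taylor expansion of $\phi$ about the origin with an Euler--Maclaurin-type expansion for the punctured-hole trapezoidal rule applied to $\phi\cdot s$. The correction weights $\bar\omega$ will be interpreted as exactly cancelling the $N_p$ error functionals $\partial^{2\xi}\phi(0)$ with $|\xi|_1\le p$ that appear in that expansion, so that the first uncancelled term is already of order $h^{2p+4-\alpha}$. Accordingly I would first prove, as a preliminary lemma in \cref{sectionTA}, an expansion of the shape
\[
T_h^0[\phi s] - \int_{\mathbb{R}^2}\phi s\, \d x \;=\; \sum_{k=0}^{p} h^{2k+2-\alpha}\sum_{|\xi|_1 = k} \mathcal{A}_\xi\, \partial^{2\xi}\phi(0) \;+\; O(h^{2p+4-\alpha})
\]
valid for $\phi\in C_c^N(\mathbb{R}^2)$, $N\ge 2p+4$, in which the $\mathcal{A}_\xi$ are explicit constants depending only on $s$; the absence of odd-derivative contributions reflects the invariance of $s$ under each coordinate reflection.

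Granted this expansion, the convergence $\omega(h)\to\bar\omega$ at rate $O(h^{2p+2})$ follows by applying it to each test integrand $g\cdot s\cdot x^{2\xi_i}$. The hypotheses $\partial^{k} g(0)=0$ for $1\le |k|_1\le 2p+1$ imply that the Taylor data of $g(x)\,x^{2\xi_i}$ at $0$ agree with those of the bare monomial $x^{2\xi_i}$ up to order $2|\xi_i|_1+2p+1$, so each component of the right-hand side $c(h)$ stabilises with error $O(h^{2p+2})$. Combined with $\boldsymbol{G}(h)=\boldsymbol{I}+O(h^{2p+2})$ (again from the vanishing derivatives of $g$) and the non-singularity of $\boldsymbol{K}$ proved in \cref{sectionNSCM}, inverting $\boldsymbol{K}\boldsymbol{G}(h)\omega(h)=c(h)$ gives $|\omega(h)-\bar\omega|_2 = O(h^{2p+2})$.

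With the rate of $\omega(h)\to\bar\omega$ in hand, I would Taylor-expand $\phi = \phi_T + R$ to degree $2p+3$ with $|R(x)|\le C|x|^{2p+4}$ and decompose $\phi s = (\phi_T g)s + \psi s$, where $\psi := \phi - \phi_T g$ is compactly supported. By linearity of $Q_h^p$ in the regular factor, $Q_h^p[(\phi_T g)s]$ splits into a finite sum of $Q_h^p[g x^\gamma s]$ contributions. Monomials $x^\gamma$ with an odd component contribute nothing because the reflection symmetry of $g\cdot s$ is matched by the imposed symmetry of $\bar\omega$; for $\gamma = 2\xi$ with $|\xi|_1\le p$ the linear system \cref{RawLinearSystem} evaluates the integrals exactly, and the substitution $\omega(h)\mapsto\bar\omega$ costs $a(h)\cdot O(h^{2p+2})=O(h^{2p+4-\alpha})$; the single border case $|\xi|_1=p+1$ is handled directly by the Euler--Maclaurin expansion, whose first non-vanishing term is already of order $h^{2p+4-\alpha}$ since the Taylor data of $g\,x^{2\xi}$ at $0$ vanish below order $2p+2$. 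The remainder $\psi$ inherits $\partial^{2\eta}\psi(0)=0$ for $|\eta|_1\le p$ from the hypotheses on $g$, so the expansion forces $T_h^0[\psi s]-\int\psi s = O(h^{2p+4-\alpha})$, while the correction contribution $h^{2-\alpha}\sum_\beta \bar\omega_\beta\psi(\beta h)$ is of the same order since $\psi(\beta h)=O(h^{2p+2})$. Summing all contributions delivers \cref{AccuracyofCorrectedTrapezRule}.

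The main technical obstacle is the Euler--Maclaurin expansion itself. Unlike its one-dimensional counterpart, a two-dimensional expansion with the anisotropic kernel $x_1^2/|x|^{2+\alpha}$ requires reconciling the Cartesian grid $h\mathbb{Z}^2$ with the radial structure of $s$, and one must identify exactly which linear combinations of derivatives $\partial^{2\xi}\phi(0)$ appear at each order $h^{2k+2-\alpha}$. Verifying that these $N_p$ error functionals are linearly independent on the relevant space of Taylor data --- so that they are precisely what the $N_p$ weights $\bar\omega$ can annihilate --- is essential, and dovetails with the non-singularity of $\boldsymbol{K}$ addressed in \cref{sectionNSCM}.
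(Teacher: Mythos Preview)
Your proposal is essentially the paper's proof: the decomposition $\phi = \phi_T g + \psi$ is exactly the paper's $\tilde\phi = P_\phi\cdot g$, your three error contributions are its $E_1,E_2,E_3$, and your route to $|\omega(h)-\bar\omega|_2 = O(h^{2p+2})$ via $c(h)\to\mathfrak{C}(\alpha)$, $\boldsymbol{G}(h)\to\boldsymbol{I}$ and the invertibility of $\boldsymbol{K}$ is identical.

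The only difference is packaging. You posit a single Euler--Maclaurin expansion valid for general $\phi\in C_c^N$, whereas the paper never states such a result; instead it proves two targeted special cases --- \cref{SecondMainThm} for the test integrands $g\,s\,x^{2\xi}$ (with the sharper remainder $O(h^{2p+4-\alpha+2|\xi|_1})$, which is what actually forces each $c_i(h)$ to converge at the uniform rate $O(h^{2p+2})$ after dividing by $h^{2|\xi_i|_1+2-\alpha}$) and \cref{corollary:estimate of trapez alone} for functions whose derivatives at $0$ vanish through order $2p+1$. Both are obtained by the technique you allude to but do not name: Poisson summation applied to $f\cdot\Psi(\cdot/h)$ with a smooth radial cut-off $\Psi$, followed by repeated integration by parts on the Fourier side to extract the $h^{2|\xi|_1+2-\alpha}$ scaling. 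Your general expansion is equivalent to the conjunction of these two lemmas (and indeed follows from them by Taylor-expanding $\phi$), so there is no substantive divergence. One minor simplification: Taylor-expanding $\phi$ only to degree $2p+1$, as the paper does, already makes $\psi$ vanish to order $2p+2$ and removes your border case $|\xi|_1=p+1$ entirely.
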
 
We will introduce some definitions, lemmas and a theorem before we prove the main theorem \cref{MainTheorem}. 
\begin{definition}
Given $f\in L^1(\mathbb{R}^n)$, we define \begin{align}
    \widehat{f} (\xi) = \int_{\mathbb{R}^n}f(x)\exp{(-2\pi i \xi\cdot x)}\d x.
\end{align} $\widehat{f}$ is the Fourier transform of $f$.
\end{definition} We state the Poisson Summation formula for later use, for a proof, see \cite{grafakos2014classical}.
\begin{lemma}[Poisson Summation Formula]\label{PoissonSummationFormula}
Let $f$ be a continuous function on $\mathbb{R}^n$ which satisfies \begin{align}
    |f(x)|\leq \frac{C}{(1+|x|)^{n+\delta}} \quad \text{for some}\ C,\delta>0 \ \text{and for all}\ x\in\mathbb{R}^n,
\end{align} and whose Fourier transform $\widehat{f}$ restricted on $\mathbb{Z}^n$ satisfies \begin{align} \label{CondPoissonSummation}
    \sum_{m\in\mathbb{Z}^n}|\widehat{f}(m)|<\infty,
\end{align} then \begin{align} \label{PoissonSummationFormula3}
    \sum_{m\in\mathbb{Z}^n}\widehat{f}(m) = \sum_{k\in\mathbb{Z}^n}f(k).
\end{align}
\end{lemma}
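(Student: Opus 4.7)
The plan is a classical periodization argument. First I would introduce the periodized function $F(x)\coloneqq \sum_{k\in\mathbb{Z}^n} f(x+k)$ and verify that the series converges absolutely and uniformly on compact subsets of $\mathbb{R}^n$, so that $F$ is a continuous $\mathbb{Z}^n$-periodic function. The decay hypothesis $|f(x)|\leq C(1+|x|)^{-(n+\delta)}$ provides the needed majorant: on a bounded region one dominates the tail $\sum_{|k|>R}|f(x+k)|$ by a convergent sum of the form $\sum_{|k|>R}(1+|k|)^{-(n+\delta)}$, which can be estimated by an integral test.

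The next step is to compute the Fourier coefficients of $F$ on the torus $[0,1]^n$ and to identify them with the values $\widehat{f}(m)$. Using the uniform convergence of the periodized series together with $f\in L^1(\mathbb{R}^n)$ (a consequence of the decay), I would swap sum and integral via Fubini and translate via $y=x+k$, noting $e^{-2\pi i m\cdot k}=1$ for $m,k\in\mathbb{Z}^n$:
\[
\int_{[0,1]^n} F(x)e^{-2\pi i m\cdot x}\,dx = \sum_{k\in\mathbb{Z}^n}\int_{[0,1]^n+k} f(y)e^{-2\pi i m\cdot y}\,dy = \widehat{f}(m).
\]
Thus the Fourier coefficients of $F$ coincide with $\{\widehat{f}(m)\}_{m\in\mathbb{Z}^n}$.

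To conclude, I would invoke Fourier inversion on the torus. By hypothesis $\sum_m|\widehat{f}(m)|<\infty$, so the trigonometric series $G(x)\coloneqq \sum_{m\in\mathbb{Z}^n}\widehat{f}(m)e^{2\pi i m\cdot x}$ converges absolutely and uniformly and defines a continuous $\mathbb{Z}^n$-periodic function. Since $F$ and $G$ are both continuous and share the same Fourier coefficients, the uniqueness theorem for Fourier series of continuous functions on $\mathbb{T}^n$ forces $F\equiv G$. Evaluating at $x=0$ gives $\sum_{k\in\mathbb{Z}^n}f(k)=F(0)=G(0)=\sum_{m\in\mathbb{Z}^n}\widehat{f}(m)$, which is the desired identity.

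The main obstacle is the final identification $F\equiv G$: in general, a continuous function is not determined pointwise by its Fourier series. The assumption $\sum_m|\widehat{f}(m)|<\infty$ is precisely what sidesteps this by guaranteeing uniform convergence of the Fourier series to a continuous limit, after which uniqueness reduces to the statement that trigonometric polynomials are dense in $C(\mathbb{T}^n)$ (via Fej\'er's theorem or the Stone--Weierstrass theorem). Without absolute summability one would have to appeal to Ces\`aro means or almost-everywhere convergence, which is avoided by the stated hypotheses.
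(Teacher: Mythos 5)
Your argument is correct, and it is the standard periodization proof: the paper itself does not prove this lemma but simply cites \cite{grafakos2014classical}, where essentially this same argument (periodize $f$, identify the Fourier coefficients of the periodization with $\widehat{f}(m)$ via Fubini, then use the absolute summability hypothesis and uniqueness of Fourier coefficients for continuous functions on the torus) is given. All the delicate points --- uniform convergence of $\sum_k f(x+k)$ from the decay hypothesis, the interchange of sum and integral, and why $\sum_m|\widehat{f}(m)|<\infty$ is needed to upgrade from equality of Fourier coefficients to pointwise equality at $x=0$ --- are correctly handled in your write-up.
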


It is not hard to verify that if $f\in C_c^{n+1}(\mathbb{R}^n)$, then $f$ satisfies the hypotheses of Poisson Summation formula, in fact, 
\begin{align}
     |f(x)| \leq \left(\frac{1+|R|}{1+|x|}\right)^{n+1}\|f\|_{\infty}, \ \text{where}\ supp(f)\subset B_R(0)\ \text{for some}\ R>0, 
\end{align}
and for fixed $m\in \mathbb{Z}^{n}\setminus\{0\}$, choose $|m_j| = \max_{i = 1,\cdots, n}|m_i|$, then by using integration by part with respect to $x_j$ $(n+1)$-times and the property of being compactly supported, we obtain
\begin{align}
    \widehat{f}(m) = (-1)^{n+1}\int_{\mathbb{R}^n}(\partial^{n+1}_{j}f)(x)\frac{\exp\{-2\pi i x \cdot m\}}{(-2\pi i m_j)^{n+1}}\ \d x,
\end{align} note that $|m| \leq \sqrt{n}|m_j|$, we immediately have 
\begin{align}
    |\widehat{f}(m)|\leq \left(\frac{\sqrt{n}}{2\pi}\right)^{n+1}\frac{\max_{|l|_1 = n+1}\|\partial^l f\|_{\mathcal{L}_1}}{|m|^{n+1}}.
\end{align} The right hand side of above inequality is summable over $\mathbb{Z}^n\setminus\{0\}$. Thus the condition \cref{CondPoissonSummation} is satisfied.

Now, we let $f \coloneqq g\cdot s$ where $g\in C_c^\infty(\mathbb{R}^n)$ and $s(x)$ is defined in \cref{SingularFunctionGeneral}. Introducing a radially symmetric cut-off function $\Psi\in C^\infty(\mathbb{R}^n)$ such that \begin{align}\label{Cut-offFunction}
    \Psi(x) = \left\{ \begin{array}{cc}
        0, & |x|\leq \frac{1}{2} \\
        1, & |x|\geq 1
    \end{array}\right. .
\end{align} A concrete example of such cut-off function is \begin{align}
    \Psi(x) = \left\{ \begin{array}{cc}
        0, & |x|\leq \frac{1}{2} \\
        \exp(\frac{-36}{9 - 16(1 - |x|^2)^2}+4), & \frac{1}{2}\leq |x|\leq 1 \\
        1, & |x|\geq 1
    \end{array}\right. .
\end{align} By the property of $\Psi$, $s(x)\Psi\left(\frac{x}{h}\right)$ can be continuously extend to the origin by letting $s(0)\Psi(0) = \lim_{x\to 0}s(x)\Psi\left(\frac{x}{h}\right)=0$. Therefore, \begin{align}
    T_h^0[f] & = h^n\sum_{\beta\in\mathbb{Z}^n\setminus\{0\}} f(\beta h)  \nonumber \\
    & = h^n\sum_{\beta\in\mathbb{Z}^n\setminus\{0\}} f(\beta h)\Psi(\beta) + h^n \underbrace{f(0)\Psi(0)}_{=0} = T_h\left[f(\cdot)\Psi\left(\frac{\cdot}{h}\right) \right].
\end{align} Hence we can split \begin{align} \label{SplitbyPsi}
    \int f(x)\d x - T_h^0[f] & =  \int f(x)\left(1-\Psi\left(\frac{x}{h}\right)\right)\d x \nonumber \\
    & + \int f(x)\Psi\left(\frac{x}{h}\right)\d x - T_h\left[f(\cdot) \Psi\left(\frac{\cdot}{h}\right)\right].
\end{align} We now state and prove \cref{SecondMainThm}, which establishes the existence of the limit $\lim_{h\to 0}c(h)$ from \cref{RHSofLinearSystem} and plays an important role in the proof of \cref{MainTheorem}. Before doing so, we need the following lemma.
\begin{lemma} \label{FirstLemmainErrorBound}
Let $\eta\in \mathbb{N}_0^n$, the partial derivatives of order $k$ $(\in\mathbb{N}_0^n)$ of $x\rightarrow \frac{x^{\eta} x_1^2}{|x|^{n+\alpha}}$, $x \in \mathbb{R}^n$ are given as \begin{align}\label{ErrorBoundLemma1}
    \partial^{k}\frac{x^{\eta} x_1^2}{|x|^{n+\alpha}} = \frac{1}{|x|^{n-2+\alpha - |\eta|_1+|k|_1}}P_{k,\eta}\left(\frac{x}{|x|}\right),
\end{align} where $P_{k,\eta}$ is a polynomial in $n$-variables with $\text{deg}P_{k,\eta} = 2 + |\eta|_1 + |k|_1$.
\end{lemma}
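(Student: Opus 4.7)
The plan is to prove \Cref{FirstLemmainErrorBound} by induction on $|k|_1$, exploiting the fact that the class of functions of the form $|x|^{-m}P(x/|x|)$ with $P$ a polynomial of prescribed degree is closed under partial differentiation, with $m$ and $\deg P$ each incrementing by exactly one per derivative.

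For the base case $|k|_1=0$, I would rewrite
\begin{align*}
\frac{x^{\eta}x_1^2}{|x|^{n+\alpha}} = \frac{1}{|x|^{n-2+\alpha-|\eta|_1}}\left(\frac{x}{|x|}\right)^{\eta}\!\!\left(\frac{x_1}{|x|}\right)^{\!2},
\end{align*}
so that $P_{0,\eta}(y)=y^{\eta}y_1^{2}$ is a polynomial of degree $|\eta|_1+2$, matching the claim.

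For the inductive step, suppose the formula holds for some $k$, so that the derivative equals $|x|^{-m}P(x/|x|)$ with $m=n-2+\alpha-|\eta|_1+|k|_1$ and $d\coloneqq \deg P = 2+|\eta|_1+|k|_1$. Decomposing $P=\sum_{j=0}^{d}P_j$ into its homogeneous components (each $P_j$ homogeneous of degree $j$), I would use $|x|^{-m}P_j(x/|x|) = P_j(x)/|x|^{m+j}$ and differentiate each summand by the product rule:
\begin{align*}
\partial_i \frac{P_j(x)}{|x|^{m+j}} = \frac{\partial_i P_j(x)}{|x|^{m+j}} - (m+j)\frac{x_i P_j(x)}{|x|^{m+j+2}}.
\end{align*}
Recognizing that $\partial_i P_j$ is homogeneous of degree $j-1$ and $y_iP_j$ is homogeneous of degree $j+1$, both terms collapse to $|x|^{-(m+1)}$ times a polynomial in $x/|x|$. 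Summing over $j$ yields $\partial_i\partial^{k}\frac{x^{\eta}x_1^{2}}{|x|^{n+\alpha}} = |x|^{-(m+1)}\widetilde{P}(x/|x|)$ with
\begin{align*}
\widetilde{P}(y) = \sum_{j=0}^{d}\bigl[(\partial_i P_j)(y) - (m+j)\,y_i P_j(y)\bigr],
\end{align*}
which is a polynomial of degree at most $d+1$, and the $|x|$ exponent has incremented by $1$, as required.

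The only substantive point is to argue $\deg\widetilde{P}$ equals $d+1$, not something smaller. The only summand of $\widetilde P$ that could contribute in degree $d+1$ is $-(m+d)\,y_iP_d(y)$, since $\partial_i P_j$ has degree $\leq d-1$ for every $j$ and $y_iP_j$ has degree $\leq d+1$ with equality iff $j=d$. Thus it suffices to check $m+d\neq 0$; but $m+d = n+\alpha+2|k|_1>0$ because $n\geq 1$ and $\alpha>0$, so the leading contribution does not vanish and $\deg\widetilde P = d+1 = 2+|\eta|_1+|k|_1+1$. This closes the induction, and the degree-tracking at the leading level is really the only step that requires any care; everything else is bookkeeping with the product rule.
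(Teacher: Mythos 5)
Your proof is correct, and it follows the same overall blueprint as the paper (induction on $|k|_1$, with the identical base case $P_{0,\eta}(y)=y^{\eta+2e_1}$), but the inductive step is carried out by a genuinely different computation. The paper differentiates the representation $|x|^{-m}P_{k,\eta}(x/|x|)$ directly, applying the chain rule to the composition $P_{k,\eta}(x/|x|)$; this produces the three-term recursion $P_{k+e_i,\eta}(y)=-(n-2+\alpha-|\eta|_1+|k|_1)\,y_iP_{k,\eta}(y)+(1-y_i^2)P^{(i)}_{k,\eta}(y)-\sum_{j\neq i}y_iy_jP^{(j)}_{k,\eta}(y)$, and the exact-degree claim is left as ``one can check'' even though all three terms have degree $d+1$ and could in principle cancel at top degree. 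You instead split $P$ into homogeneous components, rewrite each piece as $P_j(x)/|x|^{m+j}$, and use only the product rule; this makes the degree bookkeeping transparent, and your verification that the top-degree part is exactly $-(m+d)\,y_iP_d(y)$ with $m+d=n+\alpha+2|k|_1>0$ supplies precisely the non-cancellation argument the paper omits. So your route buys a cleaner and fully explicit justification of $\deg P_{k,\eta}=2+|\eta|_1+|k|_1$ (only an upper bound, or boundedness of $P_{k,\eta}$ on the unit sphere, is actually used downstream), at the cost of the extra homogeneous-decomposition step, while the paper's direct chain-rule recursion gives a concrete closed-form recursion for $P_{k,\eta}$ but asserts the degree count without detail.
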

\begin{proof}
For $k=0$, \begin{align}\label{Lemma2.4Case0}
    \partial^{0}\frac{x^{\eta} x_1^2}{|x|^{n+\alpha}} = \frac{1}{|x|^{n-2+\alpha - |\eta|_1}}\left(\frac{x}{|x|}\right)^{\eta + 2e_1},
\end{align} which shows \cref{Lemma2.4Case0} is true for $k=0$ with $P_{0,\eta}(x) = x^{\eta+2e_1}$. Suppose for $k\in\mathbb{N}_0^n$ the equation \cref{ErrorBoundLemma1} is satisfied. We define $P_{k,\eta}^{(j)}\coloneqq \partial_{x_j}P_{k,\eta}$ with $j=1,\cdots,n$. Note that $P_{k,\eta}^{(j)}$ is a polynomial with one degree less than $P_{k,\eta}$. Now, \begin{align}
    \partial^{k+e_i}\left\{ \frac{x^{\eta+2e_1}}{|x|^{n+\alpha}}\right\} & = \partial_i\left\{ \frac{1}{|x|^{n-2+\alpha - |\eta|_1+|k|_1}}P_{k,\eta}\left(\frac{x}{|x|}\right)\right\} \nonumber \\
    & = -(n-2+\alpha-|\eta|_1+|k|_1)\frac{1}{|x|^{n-1+\alpha-|\eta|_1+|k|_1}}\frac{x_i}{|x|}P_{k,\eta}\left(\frac{x}{|x|}\right) \nonumber \\
    & + \frac{1}{|x|^{n-1+\alpha - |\eta|_1+|k|_1}}\left[ P_{k,\eta}^{(i)}\left(\frac{x}{|x|}\right) \left(1-\frac{x_i^2}{|x|^2}\right) - \sum_{j\not = i}P_{k,\eta}^{(j)}\left(\frac{x}{|x|}\right)\frac{x_i x_j}{|x|^2} \right] \nonumber \\
    & = \frac{1}{|x|^{n-1+\alpha-|\eta|_1+|k|_1}}\Bigg[ -(n-2+\alpha-|\eta|_1+|k|_1)\frac{x_i}{|x|}P_{k,\eta}\left(\frac{x}{|x|}\right) \nonumber \\
    & + \left(1-\frac{x_i^2}{|x|^2}\right)P_{k,\eta}^{(i)}\left(\frac{x}{|x|}\right) - \sum_{j\not = i}\frac{x_i x_j}{|x|^2}P^{(j)}_{k,\eta}\left( \frac{x}{|x|}\right) \Bigg] \nonumber \\
    & \eqqcolon \frac{1}{|x|^{n-2+\alpha-|\eta|_1+|k|_1+1}} P_{k+e_i,\eta}\left(\frac{x}{|x|}\right).
\end{align} One can check that $\text{deg}(P_{k+e_i,\eta})=3+|\eta|_1+|k|_1$. The proof is then completed by induction.
\end{proof}
In this work we follow the convention that the generic constant $C$ may vary from line to line in the proof.
\begin{theorem} \label{SecondMainThm}
Let $n\in\mathbb{N}$, $p\in\mathbb{N}_0$ and $M =\max(4p+4,n+1)$. Assume $g\in C_c^{N}(\mathbb{R}^n):N\geq M$ such that $\partial^{k}g(0) = 0$ for all $|k|_1=1,\dots,2p+1$. For any $\xi\in\mathbb{N}_0^n$ with $|\xi|_1\leq p$, we have \begin{align}
    \left| \int_{\mathbb{R}^n} g\cdot s \cdot x^{2\xi}\ \d x - T_h^0[g\cdot s\cdot x^{2\xi}] - g(0)h^{2|\xi|_1+2-\alpha}c(2\xi,\alpha)\right|\leq C h^{2p+4-\alpha + 2|\xi|_1},
\end{align} where $c(\xi,\alpha)\in\mathbb{R}$ and the constant $C$ depends only on $g$ and $p$.
\end{theorem}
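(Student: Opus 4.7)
The plan is to start from the decomposition \cref{SplitbyPsi}, writing $\int f\,\d x - T_h^0[f]$ as a near-field piece $\int f(x)(1-\Psi(x/h))\,\d x$ and a far-field piece $\int f\Psi(x/h)\,\d x - T_h[f\Psi(\cdot/h)]$. The announced main term $g(0)h^{2|\xi|_1+2-\alpha}c(2\xi,\alpha)$ will arise as a sum $c(2\xi,\alpha)=c_{\mathrm{near}}+c_{\mathrm{far}}$ of two constants, one from each piece, with both remainders shown to be $O(h^{2p+4+2|\xi|_1-\alpha})$.

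For the near-field, the hypothesis $\partial^{k}g(0)=0$ for $1\leq|k|_1\leq 2p+1$ gives $g(x)=g(0)+R(x)$ with $|R(x)|\leq C|x|^{2p+2}$. The $g(0)$ piece, after the substitution $y=x/h$ and the homogeneity of $s\cdot x^{2\xi}$, equals $g(0)h^{2+2|\xi|_1-\alpha}c_{\mathrm{near}}(2\xi,\alpha)$ with $c_{\mathrm{near}}(2\xi,\alpha):=\int s(y)y^{2\xi}(1-\Psi(y))\,\d y$; this integral is finite since $(1-\Psi)$ localizes to $|y|\leq 1$ where $sy^{2\xi}\sim|y|^{2+2|\xi|_1-n-\alpha}$ is integrable for $\alpha<2$. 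The $R$ piece, cut off by $1-\Psi(x/h)$ to $|x|\leq h$, is bounded directly by $\int_0^h r^{2p+3+2|\xi|_1-\alpha}\,\d r = O(h^{2p+4+2|\xi|_1-\alpha})$.

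For the far-field, I would rescale $y=x/h$ to factor out the common prefactor $h^{2+2|\xi|_1-\alpha}$, reducing the problem to $\int\phi_h(y)\,\d y - \sum_{\beta\in\mathbb{Z}^n}\phi_h(\beta)$ with $\phi_h(y)=g(hy)s(y)y^{2\xi}\Psi(y)\in C_c^N$; since $\phi_h(0)=0$, \cref{PoissonSummationFormula} converts this to $-\sum_{m\neq 0}\widehat{\phi_h}(m)$. Writing $g(hy)=g(0)\chi(hy)+\tilde g(hy)$ for a fixed smooth cutoff $\chi$ equal to $1$ on a neighborhood of $\mathrm{supp}(g)$ splits the far-field into two contributions. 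For $\chi(h\cdot)\psi$, with $\psi(y):=s(y)y^{2\xi}\Psi(y)$, an $L$-fold integration by parts in the coordinate $y_j$ of largest $|m_j|$ combined with the decay bounds from \cref{FirstLemmainErrorBound} for the derivatives of $sy^{2\xi}$ will show $\widehat{\chi(h\cdot)\psi}(m)\to\widehat\psi_{\mathrm{reg}}(m)$ at rate $O(|m|^{-L}h^{L-2-2|\xi|_1+\alpha})$, producing a main-term contribution $g(0)h^{2+2|\xi|_1-\alpha}c_{\mathrm{far}}(2\xi,\alpha)$ up to an $O(h^L)$ error. For $\tilde g(h\cdot)\psi$, the Taylor-type bound $|(\partial^\mu\tilde g)(hy)|\leq C(h|y|)^{2p+2-|\mu|}$ on $\{|hy|\leq 1\}$, the boundedness of $\partial^\mu\tilde g$ on $\{|hy|\geq 1\}$, Leibniz's rule, and again \cref{FirstLemmainErrorBound}, together with splitting the $y$-integration into $\{|y|\leq 1/h\}$ and $\{|y|\geq 1/h\}$, yield the sharp estimate $\|\partial^{L}_{y_j}[\tilde g(h\cdot)\psi]\|_{L^1}\leq Ch^{2p+2}$ for $L\geq 2p+4+2|\xi|_1-\alpha$; then $|\widehat{\tilde g(h\cdot)\psi}(m)|\leq C|m|^{-L}h^{2p+2}$ and summing over $m\neq 0$ (legitimate for $L>n$) delivers the required remainder $O(h^{2p+4+2|\xi|_1-\alpha})$ after multiplying by the rescaling prefactor. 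Setting $c(2\xi,\alpha):=c_{\mathrm{near}}+c_{\mathrm{far}}$ completes the argument.

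The hard part will be the sharp $L^1$ estimate $\|\partial^{L}_{y_j}[\tilde g(h\cdot)\psi]\|_{L^1}\leq Ch^{2p+2}$, which upgrades the naive $O(1)$ bound by a full $h^{2p+2}$. Obtaining this improvement requires balancing three ingredients simultaneously: the high-order vanishing of $\tilde g$ at the origin (which forces the regularity hypothesis $N\geq\max(4p+4,n+1)$), the exact power-law structure of the derivatives of $sy^{2\xi}$ furnished by \cref{FirstLemmainErrorBound}, and the $h$-dependence of the rescaled cutoffs $\chi(h\cdot)$ and $\Psi$; splitting the $y$-integration into $\{|hy|\leq 1\}$ and $\{|hy|\geq 1\}$, so that the correct bound for $\tilde g$ is applied in each region, is the key technical maneuver that delivers the sharp rate.
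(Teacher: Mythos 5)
Your proposal is correct and follows essentially the same route as the paper's proof: the split \cref{SplitbyPsi} by the cutoff $\Psi$, Poisson summation on the far field, repeated integration by parts in the coordinate of the largest frequency component using \cref{FirstLemmainErrorBound}, and the vanishing of the derivatives of $g$ at the origin to gain the extra factor $h^{2p+2}$, exactly as in \cref{TechnicalLemma}. The only differences are cosmetic: you rescale $y=x/h$ before applying Poisson summation and absorb the paper's explicit far-tail term (its $T_{m,3}$) into the auxiliary cutoff $\chi$ via $g(hy)=g(0)\chi(hy)+\tilde g(hy)$, which leads to the same estimates and the same constant $c(2\xi,\alpha)$.
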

\begin{proof}
    Let $f = g\cdot s_{\xi}$ where $s_{\xi}(x) = s(x)x^{2\xi}$. By the property of $\Psi$ in \cref{Cut-offFunction}, the first term in \cref{SplitbyPsi} can be computed by \begin{align}\label{Splitterm1}
    & \int f(x)\left(1-\Psi\left(\frac{x}{h}\right)\right) \d x = \int_{B(0,h)} g(x)s_{\xi}(x)\left(1-\Psi\left(\frac{x}{h}\right)\right) \d x \nonumber \\
    & = h^{n}\int_{|x|\leq 1} g(hx)s_{\xi}(hx)\left(1-\Psi(x)\right) \d x 
    = h^{2|\xi|_1+2-\alpha}\int_{|x|\leq 1} g(hx)s_{\xi}(x)\left(1-\Psi(x)\right) \d x.
\end{align} From the assumptions on $g$ and $2p+2|\xi|_1+3-\alpha > 0$, Taylor's theorem and $n$-dimensional spherical co-ordinate transform, with radial direction denoted by $r$, we have \begin{align} \label{Splitterm2}
    &\left|\int_{|x|\leq 1}(g(hx)-g(0))s_{\xi}(x)(1-\Psi(x))\d x \right| \nonumber \\
    &= \left|\int_{|x|\leq 1}\left(g(hx)-\sum_{\substack{k\in\mathbb{N}_0^n,\\|k|_1\leq 2p+1}}\frac{\partial^{k}g(0)}{k!\partial x^{k}}(hx)^{k}\right)s_{\xi}(x)(1-\Psi(x))\d x\right|  \nonumber\\
    &\leq h^{2p+2} \sum_{\substack{k\in\mathbb{N}_0^n,\\|k|_1= 2p+2}}\int_{|x|\leq 1}\left|\frac{\partial^{k}g(\rho)}{k!\partial x^{k}}\right||x|^{2p+2|\xi|_1-n+4-\alpha}|1-\Psi(x)| \d x  \nonumber \\
    &\leq C \max_{|k|_1=2p+2}\|\partial^{k}g\|_{\infty}\ h^{2p+2}\int_0^1 r^{2p+2|\xi|_1+3-\alpha}\ \d r \leq  C\ h^{2p+2},
\end{align} where $\rho$ is some point between $hx$ and $0$ . Combining \cref{Splitterm1} and \cref{Splitterm2}, we have \begin{align} \label{FirstPartEstimate}
    \left|\int f(x)\left(1-\Psi\left(\frac{x}{h}\right)\right)\d x - g(0)h^{2|\xi|_1+2-\alpha}\int_{|x|\leq 1}s_{\xi}(x)(1-\Psi(x))\d x\right| 
    \leq C h^{2p+2|\xi|_1+4-\alpha}.
    \end{align} We define a dilation operator $(\tau^a\theta)(x)\coloneqq \theta(ax)$ where $a>0$ and $\theta$ is any measurable function on $\mathbb{R}^n$. Noting that $f_{\Psi,h}\coloneqq f(\cdot)\Psi(\frac{\cdot}{h})\in C_c^{N}(\mathbb{R}^n)$, then so is $\tau^hf_{\Psi,h}$. Hence, we apply Poisson Summation formula \cref{PoissonSummationFormula} to $\tau^hf_{\Psi,h}$ and get
    \begin{align} \label{PoissonSummationFormula2}
        \sum_{\beta\in\mathbb{Z}^n}(\tau^hf_{\Psi,h})(\beta) 
         = \sum_{k\in\mathbb{Z}^n}\widehat{\tau^hf_{\Psi,h}}(k),
    \end{align} thus \begin{align}\label{DecompositionOfT_h}
        T_h[f_{\Psi,h}] &= T_h\left[f(\cdot)\Psi(\frac{\cdot}{h})\right]  
         = h^n\sum_{\beta\in\mathbb{Z}^n}f(h\beta)\Psi(\beta)  
         = h^n\sum_{\beta\in\mathbb{Z}^n}(\tau^hf_{\Psi,h})(\beta)  \nonumber \\
        & = h^n \sum_{k\in\mathbb{Z}^n}\widehat{\tau^hf_{\Psi,h}}(k) 
         = h^n\sum_{k\in\mathbb{Z}^n} h^{-n}\widehat{f_{\Psi,h}}\left(\frac{k}{h}\right) \nonumber \\
        & = \int_{\mathbb{R}^n} f(x)\Psi\left(\frac{x}{h}\right)\d x + \sum_{k\in\mathbb{Z}^n\setminus\{0\}}\widehat{f_{\Psi,h}}\left(\frac{k}{h}\right).
    \end{align}
    Denote $I_{\xi}(h,k)\coloneqq \widehat{f_{\Psi,h}}\left(\frac{k}{h}\right)$, from \cref{PoissonSummationFormula2} and \cref{DecompositionOfT_h} we know 
    \begin{align}
        \sum_{k\in\mathbb{Z}^n\setminus\{0\}} |I_{\xi}(h,k)| &< \infty, \\
        T_h[f_{\Psi,h}] - \int_{\mathbb{R}^n} f(x)\Psi\left(\frac{x}{h}\right) \ \d x & = \sum_{k\in\mathbb{Z}^n\setminus\{0\}} I_{\xi}(h,k). \label{TrapzIntegralDiff}
    \end{align}
    The following lemma provides an error estimate for each $I_{\xi}(h,k)$. 
    \begin{lemma}\label{TechnicalLemma}
Assume all conditions stated in \Cref{SecondMainThm}. For each $k\in\mathbb{Z}^n\setminus\{0\}$, let $k_j=\max_{l=1,\cdots,n}|k_l|$.
There exists a function $W(\xi,k)$, independent of $h\ \text{and}\ g$, and constant $C$, depending on $g$ and $M$, such that \begin{align}
    \left|I_{\xi}(h,k) - g(0)W(\xi,k)h^{2|\xi|_1+2-\alpha}(2\pi k_j)^{-M}\right|\leq C |k|^{-M}(h^{2p+4-\alpha+2|\xi|_1}+h^{M}).
\end{align}
\end{lemma} \begin{proof}[Proof of \cref{TechnicalLemma}]
     Denote $\hat{p} = 2p+1$. By using $\exp(i\lambda z) = \frac{1}{(i\lambda)^M}\frac{\d\empty^M}{\d z^M}(\exp(i\lambda z)),\ \lambda,z\in\mathbb{R}\setminus\{0\}$ and integration by parts repeatedly, \begin{align}
        I_{\xi}(h,k) &= \int_{\mathbb{R}^n}\exp\left(-\frac{2\pi i}{h}k\cdot x\right) g(x)s_{\xi}(x)\Psi\left(\frac{x}{h}\right)\d x \nonumber \\
        &= \left(\frac{-ih}{2\pi k_j}\right)^{M}\int_{\mathbb{R}^n}\partial^M_j\left(\Psi\left(\frac{x}{h}\right)g(x)s_{\xi}(x)\right)\exp\left(\frac{-2\pi i}{h}k\cdot x\right)\d x.
    \end{align} Here, $i$ is the imaginary unit and $\partial_j$ means taking derivative with respect to $x_j$. Define \begin{align}
        W(\xi,k)\coloneqq (-i)^{M}\int_{\mathbb{R}^n}\partial^M_j\left(\Psi \cdot s_{\xi}\right)(x)\exp\left(-2\pi i k \cdot x\right)\d x.
    \end{align} In the rest of the proof we drop the $\xi$ in first argument of $W$ for convenience. We note that $W$ does not depend on $h$ or $g$ and $\partial_j^{l}\Psi\in C_c^{\infty}$ for any $l>0$. By the condition on $M$ we know $M-(2|\xi|_1 + 2-\alpha) > M-(2p+2) \geq 2p+2 > 1$. It follows from \cref{FirstLemmainErrorBound} that \begin{align}
        \int_{|x|>1}|\partial^{M}_j s_{\xi}(x)|\d x &= \int_{|x|>1}\frac{1}{|x|^{n-2+\alpha-2|\xi|_1 + M}}\left|P_{M,\xi}\left(\frac{x}{|x|}\right)\right|\d x \nonumber \\
        &\leq C\int_1^{\infty}r^{1-\alpha+2|\xi|_1-M}\d r <\infty.
    \end{align} Therefore, \begin{align}\label{EstimateW(k)}
        |W(k)| \leq C\left(\int_{\mathbb{R}^n}\frac{\Psi(x)}{|x|^{n-2+\alpha-2|\xi|_1+M}}\d x +\sum_{0<l\leq M}\int_{\mathbb{R}^n}\partial^{l}_j \Psi(x)\partial^{M-l}_j s_{\xi}(x)\d x \right)<\infty.
    \end{align} The estimate \cref{EstimateW(k)} shows the boundedness of $W$ is independent of $k$. Now we re-scale the integral \begin{multline}\label{RescaledW(k)}
        W(k)  = (-i)^{M} h^{M-n}\int_{\mathbb{R}^n}\partial^{M}_j\left(\Psi\left(\frac{x}{h}\right)s_{\xi}\left(\frac{x}{h}\right)\right)\exp\left(\frac{-2\pi i}{h}k \cdot x\right)\d x  \\
         = (-i)^{M} h^{M+\alpha-2-2|\xi|_1}\int_{\mathbb{R}^n}\partial^{M}_j\left(\Psi\left(\frac{x}{h}\right)s_{\xi}(x)\right)\exp\left(\frac{-2\pi i}{h}k\cdot x\right)\d x.
    \end{multline} Then, denote $r(x)\coloneqq \Psi(\sfrac{x}{h})(g(x)-g(0))$, from \cref{RescaledW(k)}
    \begin{align} \label{EstI_xi}
        &I_{\xi}(h,k) - g(0)W(k)h^{2|\xi|_1+2-\alpha}(2\pi k_j)^{-M}  \nonumber \\
        &= \left(\frac{-i h}{2\pi k_j}\right)^{M} \int_{\mathbb{R}^n}\partial^{M}_j \left\{r(x)s_{\xi}(x)\right\}\exp\left(\frac{-2\pi i}{h}k\cdot x\right)\ \d x. 
    \end{align} It can be seen that for all $0\leq m\leq M$, \begin{align}
        \partial^{m}_jr(x) = \left\{ \begin{array}{ccc}
            0 &,& 0\leq |x|\leq \frac{h}{2},\\
            \sum_{0\leq l\leq m}\binom{m}{l}\frac{1}{h^{m-l}}\partial^{m-l}_j\Psi\left(\frac{x}{h}\right)(\partial^{l}_jg(x)-\delta_{0,l}g(0)) &,& \frac{h}{2}\leq |x|\leq h \\
            \partial^{m}_jg(x)-\delta_{0,m}g(0) &,& |x|>h
        \end{array}\right. .
    \end{align} Here we used Kronecker delta $\delta_{k_1,k_2}=\mathbbm{1}_{k_1}(k_2)$. Let $\text{supp}(g)\subseteq B(0,R)$ for some $R>1$, then we have \begin{align}\label{partial^j[r(x) - g(0)Psi(x/h)]}
        & \int_{\mathbb{R}^n}\partial^{m}_jr(x)\partial_j^{M-m}s_{\xi}(x)\exp\left(\frac{-2\pi i}{h} k\cdot x\right)\d x \nonumber \\
        & = \sum_{0\leq l\leq m}\frac{\binom{m}{l}}{h^{m-l}} \int_{\frac{h}{2}\leq |x|\leq h}\partial^{m-l}_j \Psi\left(\frac{x}{h}\right)\Big[\partial^{l}_jg(x)-\delta_{0,l}g(0)\Big]\partial^{M-m}_j s_{\xi}(x)\exp\left(\frac{-2\pi i}{h} k\cdot x\right)\d x  \nonumber \\
        & + \int_{h<|x|\leq R}\Big[\partial^{m}_jg(x) - \delta_{0,m}g(0)\Big]\partial^{M-m}_j s_{\xi}(x)\exp\left(\frac{-2\pi i}{h} k\cdot x\right)\d x  \nonumber\\
        & - \delta_{0,m}g(0)\int_{|x|>R}\partial^{M-m}_j s_{\xi}(x) \exp\left(\frac{-2\pi i}{h} k\cdot x\right)\d x. 
    \end{align} Let $T_{m,1},T_{m,2},T_{m,3}$ be the first, second and third terms in RHS of \cref{partial^j[r(x) - g(0)Psi(x/h)]}. Since $(\partial^{l}_jg)(0)=0$ for all $l=1,\cdots,\hat{p}$, we have for all $l\leq \hat{p}$ \begin{align}
        |\partial^{l}_j g(x)-(\partial^{l}_jg)(0)|\leq C \max_{\beta\in\mathbb{N}_0^n:|\beta|_1=\hat{p}+1}\|\partial^{\beta}g\|_{\infty}|x|^{\hat{p}+1-l}.
    \end{align} It follows that for all $0\leq m\leq M$ \begin{align} \label{partial_deriv_g_estimate}
        |\partial^{m}_jg(x)-\delta_{0,m}(\partial^{m}_jg)(0)|\leq C |x|^{\max\{\hat{p}+1-m,0\}}.
    \end{align} For all $0\leq m\leq M$ and $0\leq l\leq m$, we estimate each term in $T_{m,1}$, from \cref{partial_deriv_g_estimate} and \cref{FirstLemmainErrorBound}, \begin{align}\label{FirstEstinExtraConvOrder}
        & \frac{1}{h^{m-l}}\left|\int_{\frac{h}{2}\leq |x|\leq h}\partial^{m-l}_j \Psi\left(\frac{x}{h}\right)\Big[\partial^{l}_j g(x)-\delta_{0,l}g(0)\Big]\partial^{M-m}_j s_{\xi}(x)\exp\left(\frac{-2\pi i}{h} k\cdot x\right)\d x\right|  \nonumber \\
        & \leq \frac{C}{h^{m-l}} h^n \int_{\frac{1}{2}\leq |x|\leq 1}|h x|^{-(n-2+\alpha-2|\xi|_1+M-m)+\max\{\hat{p}+1-l,0\}}\d x  \nonumber\\
        & \leq C h^{\hat{p}+3-\alpha+2|\xi|_1-M}\int_{\frac{1}{2}}^1 r^{\hat{p}+2-\alpha+2|\xi|_1+m+l-M}\d r  \nonumber \\
        & \leq C h^{\hat{p}+3-\alpha+2|\xi|_1-M}, 
    \end{align} where we used $h^{\max\{\gamma,0\}}\leq h^{\gamma}$ if $0<h\leq 1$. It follows that $|T_{m,1}|\leq C h^{\hat{p}+3-\alpha+2|\xi|_1-M}$.
    By a similar computation as in \cref{FirstEstinExtraConvOrder}, we have \begin{align}\label{SecondEstinExtraConvOrder}
        |T_{m,2}| & = \left|\int_{h<|x|\leq R}\Big[\partial^{m}_jg(x)-\delta_{0,m}g(0)\Big]\partial^{M-m}_j s_{\xi}(x)\exp\left(\frac{-2\pi i}{h} k\cdot x\right)\d x\right|  \nonumber \\
        & \leq C\int_{h<|x|\leq R} |x|^{\max\{\hat{p}+1-m,0\}-(n-2+\alpha-2|\xi|_1+M-m)}\d x \nonumber \\
        & \leq C\int_h^R r^{\max(\hat{p}+1-m, 0) + 1 - \alpha + 2|\xi|_1 + m - M}\d r  \nonumber \\
        & \leq C\int_{h}^1 r^{\hat{p}+2-\alpha+2|\xi|_1-M}\d r + C \nonumber \\
        & \leq C (h^{\hat{p}+3-\alpha+2|\xi|_1-M}+1). 
    \end{align} 
    We used $M - (\hat{p} + 2 - \alpha + 2|\xi|_1) \geq 1 + \alpha$ in the last inequality.
    Lastly, we estimate \begin{align}\label{ThirdEstinExtraConvOrder}
        |T_{m,3}|= \left|\int_{|x|\geq R}\partial^{M}_j s_{\xi}(x)\exp\left(\frac{-2\pi i}{h} k\cdot x\right)\ \d x\right|\leq C\int_{|x|\geq R}\frac{\d x}{|x|^{n-2+\alpha-2|\xi|_1+M}} <\infty.
    \end{align} Putting together the results from \cref{FirstEstinExtraConvOrder} to \cref{ThirdEstinExtraConvOrder} and using $|k|\leq \sqrt{n}|k_j|$, we obtain from \cref{EstI_xi} \begin{align}
        &|I_{\xi}(h,k)-g(0)W(\xi,k)h^{2|\xi|_1+2-\alpha}(2\pi k_j)^{-M}| \nonumber \\
        &\leq C\left(\frac{h}{|k_j|}\right)^M\sum_{m=0}^M (|T_{m,1}| + |T_{m,2}| + |T_{m,3}|) \nonumber \\
        &\leq C \left(\frac{h}{|k_j|}\right)^{M}(h^{\hat{p}+3-\alpha+2|\xi|_1-M}+1) \nonumber \\
        &\leq C|k_j|^{-M}(h^{2p+4-\alpha+2|\xi|_1}+h^{M}) \leq C|k|^{-M}(h^{2p+4-\alpha+2|\xi|_1}+h^{M}). \end{align}
\end{proof} 

We define \begin{align}
    c(\xi,\alpha) &= c_1(\xi,\alpha) - c_2(\xi,\alpha), \label{WeightsAll}\\
    \text{where} \quad c_1(\xi,\alpha) &\coloneqq \int_{|x|\leq 1}s_{\xi}(x)(1-\Psi(x))\d x<\infty, \label{Weights1}\\
    c_2(\xi,\alpha) &\coloneqq \sum_{\substack{k\in\mathbb{Z}^n\setminus\{0\},\\ k_j = \max_l|k_l|}}W(\xi,k)(2\pi k_j)^{-M}<\infty. \label{Weights2}
\end{align} \Cref{Weights2} holds since for each $k\in\mathbb{Z}^n\setminus\{0\}$, $|k|\leq \sqrt{n}|k_j|$ and so \begin{align*}
    \sum_{k\not= 0}|k_j|^{-M}\lesssim \sum_{k\not = 0}|k|^{-M}\leq \sum_{k\not = 0}|k|^{-(n+1)}<\infty.
\end{align*} Using \cref{SplitbyPsi,FirstPartEstimate,DecompositionOfT_h,TrapzIntegralDiff}, \crefrange{WeightsAll}{Weights2} and \Cref{TechnicalLemma}, we obtain \begin{align}
    & \left|\int_{\mathbb{R}^n} f(x)\d x - T_h^0[f] - g(0)h^{2-\alpha+2|\xi|_1}c(\xi,\alpha)\right|  \nonumber \\
    & \leq \left|\int_{\mathbb{R}^n} f(x)\Big(1-\Psi\Big(\frac{x}{h}\Big)\Big)\ \d x - g(0)h^{2-\alpha+2|\xi|_1}c_1(\xi,\alpha)\right| \nonumber \\
    & + \left|\int_{\mathbb{R}^n} f(x)\Psi\Big(\frac{x}{h}\Big)\d x - T_h\Big[f(\cdot)\Psi\Big(\frac{\cdot}{h}\Big)\Big] + g(0)h^{2-\alpha+2|\xi|_1}c_2(\xi,\alpha)\right|  \nonumber \\
    & \overset{\cref{TrapzIntegralDiff}}{\leq} \left|\int_{\mathbb{R}^n} f(x)\Big(1-\Psi\Big(\frac{x}{h}\Big)\Big)d x - g(0)h^{2-\alpha+2|\xi|_1}c_1(\xi,\alpha)\right| 
     + \left|g(0)h^{2-\alpha+2|\xi|_1}c_2(\xi,\alpha) -\sum_{k\in\mathbb{Z}^n\setminus\{0\}}I_{\xi}(h,k)\right|  \nonumber \\
    & \overset{\cref{FirstPartEstimate},\cref{Weights2}}{\leq} Ch^{2p+4-\alpha+2|\xi|_1} + \sum_{\substack{k\in\mathbb{Z}^n,k\not = 0,\\ k_j = \max_l|k_l|}}|I_{\xi}(h,k) - g(0)h^{2-\alpha+2|\xi|_1}W(\xi,k)(2\pi k_j)^{-M}| \nonumber \\
    & \leq  Ch^{2p+4-\alpha+2|\xi|_1} + C(h^{2p+4-\alpha+2|\xi|_1}+h^{M})\sum_{\substack{k\in\mathbb{Z}^n,k\not = 0, \\k_j = \max_l|k_l|}}|k_j|^{-M} \nonumber \\
    & \leq Ch^{2p+4-\alpha+2|\xi|_1} + Ch^{2p+4-\alpha+2|\xi|_1}\sum_{k\in\mathbb{Z}^n,k\not = 0}|k|^{-(n+1)}
    \leq  Ch^{2p+4-\alpha + 2|\xi|_1}
\end{align} This concludes the proof.
\end{proof} 
Before we prove the \Cref{MainTheorem}, we need to know the order of convergence of the punctured-hole trapezoidal rule \cref{Punctured_Hole_Trapez}.
\begin{corollary}\label{corollary:estimate of trapez alone}
Let $p\in\mathbb{N}_0$. Assume $g\in C_c^N(\mathbb{R}^n)$ such that $N\geq \max\{2p+4,n+1\}$ and $(\partial^kg)(0) = 0$ for all $k\in\mathbb{N}_0^n:|k|_1\leq 2p+1$. Then \begin{align*}
    \left|\int_{\mathbb{R}^n}g\cdot s\d x - T_h^0[g\cdot s]\right|\leq Ch^{2p+4-\alpha}.
\end{align*}
\end{corollary}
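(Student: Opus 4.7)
The plan is to recognize that this corollary is essentially the $\xi = 0$ case of \Cref{SecondMainThm}, combined with the observation that the hypothesis of the corollary is slightly stronger than that of the theorem, in a way that kills the leading error term.

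Specifically, I would apply \Cref{SecondMainThm} with $\xi = 0\in\mathbb{N}_0^n$, so that $|\xi|_1 = 0$ and $s_\xi = s$. The hypothesis on $g$ in the corollary, namely $(\partial^k g)(0) = 0$ for every multi-index $k$ with $|k|_1 \leq 2p+1$, includes the case $|k|_1 = 0$, which gives $g(0) = 0$. In contrast, \Cref{SecondMainThm} only required the vanishing of derivatives of orders $|k|_1 = 1,\ldots,2p+1$, and in its conclusion kept a term $g(0)h^{2-\alpha}c(0,\alpha)$.

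The regularity requirement $N \geq \max\{2p+4, n+1\}$ matches the threshold $M = \max(4p+4, n+1)$ of \Cref{SecondMainThm} exactly when $\xi = 0$ (since then $M$ reduces to $\max(2p+4, n+1) \leq \max(4p+4,n+1)$, but note that plugging $\xi=0$ into the proof of \Cref{SecondMainThm} only uses derivatives up to order $M = \max(2p+4,n+1)$; indeed $\hat p + 3 - \alpha + 2|\xi|_1 - M$ and related estimates only demand $M \geq 2p+4$ when $|\xi|_1 = 0$). Thus the hypotheses of \Cref{SecondMainThm} are satisfied with this choice of $\xi$.

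Substituting into the conclusion of \Cref{SecondMainThm} and using $g(0) = 0$ to annihilate the $g(0)h^{2-\alpha}c(0,\alpha)$ correction term yields
\begin{align*}
    \left|\int_{\mathbb{R}^n} g\cdot s\, \d x - T_h^0[g\cdot s]\right| \leq Ch^{2p+4-\alpha},
\end{align*}
which is exactly the claim. There is no real obstacle here; the only small point to verify is that the derivative order $N \geq 2p+4$ in the corollary suffices to carry through the integration-by-parts argument in \Cref{TechnicalLemma} when $\xi = 0$, which is the case since the bound $M = \max(2p+4, n+1)$ is precisely what appears there.
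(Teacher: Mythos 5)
Your proposal is correct and takes essentially the same route as the paper: its own proof simply re-runs the argument of \Cref{SecondMainThm} with $\xi=0$ and with $M$ replaced by $N\geq\max\{2p+4,n+1\}$, using $g(0)=0$ to eliminate the leading term $g(0)h^{2-\alpha}c(0,\alpha)$. Your observation that the theorem cannot be cited verbatim (its hypothesis is $N\geq\max(4p+4,n+1)$) but that the $\xi=0$ case of its proof only requires derivatives up to order $\max(2p+4,n+1)$ is exactly the point the paper exploits.
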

\begin{proof}
    The proof is rather similar to that of \Cref{SecondMainThm}. We only present the essential parts. Let $\Psi$ be the cut-off function introduced in \cref{Cut-offFunction} and $\hat{p}=2p+1$. By \cref{SplitbyPsi,,TrapzIntegralDiff} for $\xi = 0$, we have \begin{align}
        \int_{\mathbb{R}^n}g(x)s(x)\d x - T_h^0[g\cdot s] &= \int_{\mathbb{R}^n}g(x)s(x)\left(1-\Psi\left(\frac{x}{h}\right)\right)\d x - \sum_{k\in\mathbb{Z}^n\setminus\{0\}}I_0(h,k), \label{Split term in corollary}\\ 
        \text{where }I_0(h,k) &= \int_{\mathbb{R}^n}\exp\left(\frac{-2\pi i}{h}k\cdot x\right)g(x)s(x)\Psi\left(\frac{x}{h}\right)\d x. \nonumber 
    \end{align} By using $g(0) = 0$ and property of $\Psi$, we have $|g(x)|\leq C|x|^{2p+2}$ and \begin{align}
        &\left|\int_{\mathbb{R}^n}g(x)s(x)\left(1-\Psi\left(\frac{x}{h}\right)\right)\d x\right| \leq C \int_{|x|\leq h} |x|^{2p+4-n-\alpha}\d x \nonumber 
        \\
        &\leq C\int_{0}^h r^{2p+3-\alpha}\d r\leq Ch^{2p+4-\alpha}. \label{Est Term 1 in Corollary}
    \end{align} We now consider $I_0(h,k)$. Fix $k\in\mathbb{Z}^n,k\not= 0$, let $k_j = \max_{i=1}^n|k_i|$. Repeated integration by parts with respect to $x_j$ yields \begin{align*}
        I_0(h,k) = \left(\frac{-i h}{2\pi} \right)^N\frac{1}{k_j^N}\int_{\mathbb{R}^n}\partial_j^N\left(\Psi\left(\frac{x}{h}\right)g(x)s(x)\right)\exp\left(\frac{-2\pi i}{h}k\cdot x\right)\d x.
    \end{align*} For a fixed $m$ with $0\leq m\leq N$, let $r(x) = \Psi(\sfrac{x}{h})g(x)$ and \begin{align*}
        I_m = \int_{\mathbb{R}^n}\partial_j^m r(x)\partial_j^{N-m}s(x)\exp\left(\frac{-2\pi i}{h}k\cdot x\right)\d x.
    \end{align*} Since \begin{align*}
        \partial^{m}_jr(x) = \left\{ \begin{array}{ccc}
            0 &,& 0\leq |x|\leq \frac{1}{2}\\
            \sum_{0\leq l\leq m}\binom{m}{l}\frac{1}{h^{m-l}}\partial^{m-l}_j\Psi\left(\frac{x}{h}\right)\partial^{l}_jg(x)&,& \frac{h}{2}\leq |x|\leq h \\
            \partial^{m}_jg(x)&,& |x|>h
        \end{array}\right. ,
    \end{align*} we have \begin{align*}
        |I_m| & \leq C \sum_{0\leq l\leq m}\frac{1}{h^{m-l}}\left|\int_{\frac{h}{2}\leq |x|\leq h}\partial^{m-l}_j \Psi\left(\frac{x}{h}\right)\partial^{l}_j g(x)\partial^{N-m}_j s(x)\exp\left(\frac{-2\pi i}{h} k\cdot x\right)\d x \right| \\
        & + \left|\int_{h<|x|<R}\partial^{m}_jg(x)\partial^{N-m}_j s(x)\exp\left(\frac{-2\pi i}{h} k\cdot x\right)\d x\right|,
    \end{align*} where $supp(g)\subset B(0,R)$. The argument of $|I_m|\leq C(h^{2p+4-\alpha-N} + 1)$ follows exactly \cref{FirstEstinExtraConvOrder,,SecondEstinExtraConvOrder} by replacing $M$ with $N$ and $\xi$ with $0$. Since $N\geq\max(2p+4,n+1)$, we obtain \begin{align}
        |I(h,k)|&\leq C\frac{1}{|k_j|^N}(h^{2p+4-\alpha} + h^N)\leq C \frac{1}{|k_j|^N}h^{2p+4-\alpha}, \nonumber \\
        \sum_{k\in\mathbb{Z}^n,k\not = 0}|I(h,k)|&\leq C h^{2p+4-\alpha}\sum_{\substack{k\in\mathbb{Z}^n,k\not = 0,\\ k_j=\max_l|k_l|}}\frac{1}{|k_j|^{n+1}}\leq Ch^{2p+4-\alpha}. \label{Est Term 2 in Corollary}
    \end{align} Combining \cref{Est Term 1 in Corollary,,Est Term 2 in Corollary}, by \cref{Split term in corollary} we have \begin{align}
        \left|\int_{\mathbb{R}^n}g\cdot s\d x - T_h^0[g\cdot s]\right|\leq Ch^{2p+4-\alpha}.
    \end{align}
\end{proof}

We are in the position to prove our \Cref{MainTheorem}.
\begin{proof}[Proof of  \cref{MainTheorem}]
We first show the limit of the solution $\omega(h)$ of the linear system \cref{MatrixLinearSystem} exists, i.e. $\lim_{h\to 0}\omega(h) = \bar{\omega}$. To this end, we apply \Cref{SecondMainThm} to each $c_i(h)$ in \cref{RHSofLinearSystem} and let $a(h) = h^{2-\alpha}$, yielding \begin{align}\label{Conv_of_c(h)}
        c_i(h) & = \frac{1}{h^{2|\xi_i|_1+2-\alpha}}\left(\int g(x)s(x)x^{2\xi_i}\ \d x - T_h^0[g\cdot s\cdot x^{2\xi_i}]\right)  \\
        & = \frac{1}{h^{2|\xi_i|_1+2-\alpha}} \left(g(0)h^{2|\xi_i|_1+2-\alpha}c(\xi_i,\alpha) + O(h^{2p+4-\alpha+2|\xi_i|_1})\right) \nonumber \\
        & = c(\xi_i,\alpha) + O(h^{2p+2})\underset{h\to 0}{\longrightarrow} c(\xi_i,\alpha). \nonumber 
    \end{align} Define \begin{align}\label{Const_C_on}
        \mathfrak{C}(\alpha) = (c(\xi_1,\alpha),\dots,c(\xi_{N_p},\alpha)).
    \end{align} It is also clear that \begin{align}
        \boldsymbol{G}(h) = \text{diag}(g(\xi_1h),\dots,g(\xi_{N_p}h)))\underset{h\to 0}{\longrightarrow} \boldsymbol{I}_{N_p\times N_p},
    \end{align} and so \begin{align}
        \boldsymbol{G}(h)^{-1} = \text{diag}(g(\xi_1h)^{-1},\dots,g(\xi_{N_p}h)^{-1}))\underset{h\to 0}{\longrightarrow} \boldsymbol{I}_{N_p\times N_p}.
    \end{align} In addition, $\|\boldsymbol{G}(h)^{-1}\|_2 = \max\{|g(\xi_kh)|^{-1}:1\leq k\leq N_p\}$ is bounded in a neighbourhood of origin. Define $\bar{\omega}$ to be the solution of the linear system \begin{align}\label{MatrixLinearSystem3}
        \boldsymbol{K}\bar{\omega} = \mathfrak{C}(\alpha).
    \end{align} 
    Assuming $\boldsymbol{K}$ is non-singular, which will be proved in next section, from \cref{MatrixLinearSystem} and \cref{MatrixLinearSystem3}, we obtain \begin{align}
        |\omega(h) - \bar{\omega}|_2 & = |\boldsymbol{G}(h)^{-1}\boldsymbol{K}^{-1}c(h) - \boldsymbol{K}^{-1}\mathfrak{C}(\alpha)|_2 \nonumber \\
        & = |\boldsymbol{G}(h)^{-1}(\boldsymbol{I}-\boldsymbol{G}(h))\boldsymbol{K}^{-1}c(h) + \boldsymbol{K}^{-1}(c(h)-\mathfrak{C}(\alpha))|_2 \nonumber \\
        & \leq \|\boldsymbol{G}(h)^{-1}\|_2 \ \|\boldsymbol{I}-\boldsymbol{G}(h)\|_2\ \|\boldsymbol{K}^{-1}\|_2 \ |c(h)|_2 + \|\boldsymbol{K}^{-1}\|_2\ |c(h)-\mathfrak{C}(\alpha)|_2. \label{Conv_Omega1}
    \end{align} A Taylor expansion of $g$ around the origin gives \begin{align}
        |1- g(\xi_k h)|\leq Ch^{2p+2}, \quad k=1,\dots,N_p, 
    \end{align} and hence \begin{align}
        \|\boldsymbol{I} - \boldsymbol{G}(h)\|_2 = \max\{|1-g(\xi_k h)|:k=1,\dots,N_p\} \leq Ch^{2p+2}. \label{Conv_I_minus_G(h)}
    \end{align} From \cref{Conv_of_c(h)}, \cref{Conv_Omega1} and \cref{Conv_I_minus_G(h)}, we therefore have \begin{align}\label{Conv_omega(h)}
        |\omega(h) - \bar{\omega}|_2\leq Ch^{2p+2},
    \end{align} where the constant $C$ depends only on $g$ and $p$.
    
    We now prove the accuracy of the corrected trapezoidal rule \cref{AccuracyofCorrectedTrapezRule}. We denote the Taylor polynomial of $\phi$ with degree $2p+1$ at the origin by \begin{align}
        P_{\phi}(x)=\sum_{\substack{\xi\in\mathbb{N}_0^2,\\ 0\leq |\xi|_1\leq 2p+1}}\frac{x^{\xi}}{\xi!}\frac{\partial^{\xi}\phi(0)}{\partial x^{\xi}},
    \end{align} and define $\tilde{\phi}=P_{\phi}\cdot g$. It is clear that \begin{align}\label{PhisTaylor}
    \partial^{k}[\phi(x)-\tilde{\phi}(x)]|_{x=0}=0,\quad \forall\ k\in\mathbb{N}_0^2:|k|_1=0,1,\dots,2p+1.
\end{align} Let $f\coloneqq \phi\cdot s$ and $I[f]\coloneqq \int_{\mathbb{R}^2} f(x)\d x$, we split $Q_h^p[f]-I[f]$ as \begin{align}
    Q_h^p[f]-I[f] & = A_h^p[\phi-\tilde{\phi}]+(T_h^0-I)[(\phi-\tilde{\phi})\cdot s]+ (Q_h^p-I)[\tilde{\phi}\cdot s] \nonumber \\
    & \eqqcolon E_1+E_2+E_3 .
\end{align} To estimate $E_1$, from \cref{PhisTaylor} and Taylor's theorem we have \begin{align*}
    |\phi(x)-\tilde{\phi}(x)|\leq \max_{\substack{x\in\mathbb{R}^2,\\ k\in\mathbb{N}_0^2,\\|k|_1=2p+2}}\left(\frac{|\partial^k(\phi - \tilde{\phi})(x)|}{k!}\right)\ |x|^{2p+2}.
\end{align*} Then from \cref{A^p} we get \begin{align}\label{E1Est}
    |E_1| = |A_h^p[\phi-\tilde{\phi}]| 
    \leq Ch^{2-\alpha}\max_{1\leq j\leq N_p}|\bar{\omega}_j|\max_{\xi\in\mathcal{G}_j:1\leq j\leq N_p}|\phi(\xi h)-\tilde{\phi}(\xi h)| 
    \leq Ch^{2p+4-\alpha}.
\end{align} In order to bound $E_2$, we apply  \Cref{corollary:estimate of trapez alone} to $\phi-\tilde{\phi}$, giving rise to \begin{align}\label{E2Est}
    |(T_h^0-I)[(\phi-\tilde{\phi})\cdot s]|\leq Ch^{2p+4-\alpha}.
\end{align} Finally, it remains to estimate $E_3$. Since \begin{align}\label{PreE_3}
    |E_3|&= |Q^p_h[\tilde{\phi}\cdot s] - I[\tilde{\phi}\cdot s]| \nonumber \\
    &\leq \sum_{\substack{\xi\in\mathbb{N}_0^2,\\0\leq|\xi|_1\leq 2p+1}}\frac{1}{\xi!}\left|\frac{\partial^{\xi}\phi(0)}{\partial x^{\xi}}\right||Q_h^p[g\cdot s\cdot x^{\xi}] - I[g\cdot s\cdot x^{\xi}]| \nonumber \\
    &\leq C\sum_{\substack{\xi\in\mathbb{N}_0^2,\\0\leq|\xi|_1\leq 2p+1}}|T_h^0[g\cdot s\cdot x^{\xi}] - I[g\cdot s\cdot x^{\xi}]+A^p_h[g\cdot x^{\xi}]|.
\end{align} We claim that $T_h^0[g\cdot s\cdot x^{\xi}]$, $I[g\cdot s\cdot x^{\xi}]$ and $A^p_h[g\cdot x^{\xi}]$ vanish when at least one of $\xi_1$ or $\xi_2$ is odd. To see this, by radial symmetry of $g$ and $\frac{1}{|x|^{2+\alpha}}$,  \begin{align} \label{OddpowerI}
    & I[g\cdot s\cdot x^{\xi}]  = \int_{\mathbb{R}^2}\frac{g(x)}{|x|^{2+\alpha}}x^{\xi+2e_1}\d x  \\
    & = \int_{[0,\infty)^2}\frac{g(x)}{|x|^{2+\alpha}}[x^{\xi+2e_1}+(-x_1)^{\xi_1+2}(x_2)^{\xi_2}+(x_1)^{\xi_1+2}(-x_2)^{\xi_2}+ (-x_1)^{\xi_1+2}(-x_2)^{\xi_2}] \d x\nonumber \\
    & = \int_{[0,\infty)^2}g(x)s(x)x^{\xi}[1+(-1)^{\xi_1}+(-1)^{\xi_2}+(-1)^{|\xi|_1}]\d x. \nonumber 
\end{align} It is easy to see that \cref{OddpowerI} vanish if at least one of $\xi_1$ or $\xi_2$ is odd. The argument for $T_h^0[g\cdot s\cdot x^{\xi}]=0$ under same condition for $\xi$ closely resembles above. Recall that $g(\xi h)$ is constant over each $\mathcal{G}_{q-m}^m$ and if $(a_1,a_2)\in\mathcal{G}_{q-m}^m$ then $(\pm a_1,\pm a_2),(\mp a_1,\pm a_2)\in\mathcal{G}_{q-m}^m$ for all $(q-m,m)\in\mathcal{I}_p$. Denote $g_{q-m,m} = g(\xi h),\ \forall \xi\in\mathcal{G}_{q-m}^m$. By \cref{A^p} we have \begin{align}
    & A^p_h[g\cdot x^{\xi}] = a(h)\sum_{(q-m,m)\in\mathcal{I}_p}\omega_{q-m}^m g_{q-m,m} \sum_{\beta\in\mathcal{G}_{q-m}^m}(\beta h)^{\xi}  \\
    &= a(h)\sum_{(q-m,m)\in\mathcal{I}_p}\omega_{q-m}^m g_{q-m,m} h^{|\xi|_1} \sum_{\beta\in\mathcal{G}_{q-m}^m}\beta^{\xi}\frac{1+(-1)^{\xi_1}+(-1)^{\xi_2}+(-1)^{|\xi|_1}}{4} \nonumber\\
    &= 0, \nonumber
\end{align} if at least one of $\xi_1$ or $\xi_2$ is odd. Therefore, we can re-write \cref{PreE_3} as \begin{align}\label{Pre2E3}
    |E_3| &\leq C\sum_{\substack{0\leq|\xi|_1\leq p,\\ \xi\in\mathbb{N}_0^2}}|T_h^0[g\cdot s\cdot x^{2\xi}] - I[g\cdot s\cdot x^{2\xi}]+A^p_h[g\cdot x^{2\xi}]|.
\end{align} By \cref{RawLinearSystem,,Conv_omega(h)} and $a(h) = h^{2-\alpha}$, $E_3$ can be estimated by \begin{align}\label{E3Est}
    |E_3|& \leq C a(h)\sum_{(q-m,m)\in\mathcal{I}_p}|\omega^m_{q-m}(h)-\bar{\omega}_{q-m}^m|\left(\sum_{\beta\in\mathcal{G}_{q-m}^m}|g(\beta h)||\beta h|^{2|\xi|_1}\right)  \\
    & \leq Ch^{2-\alpha}|\omega(h) - \bar{\omega}|_2 
    \leq Ch^{2p+4-\alpha}. \nonumber
\end{align}
Putting together \cref{E1Est,,E2Est,,E3Est}, we have proved \begin{align}
     |Q_h^p[f] - I[f]|\leq Ch^{2p+4-\alpha}.
\end{align}
\end{proof} 

The following corollary provides a Taylor's expansion of $c(h)$ as in \cref{RHSofLinearSystem}, it is used in Richardson extrapolation for numerical computation of correction weights. Recall that $\mathfrak{C}(\alpha) = \lim_{h\to 0}c(h)$ in \cref{Const_C_on}.
\begin{corollary}\label{Taylor_of_c(h)_on}
Let $p,k\in\mathbb{N}_0$. For any $g\in C_c^\infty(\mathbb{R}^2)$ such that $g(0) = 1$, $\partial^lg(0) = 0,\ l\in\mathbb{N}_0^2$ with $0\not = |l|_1\leq 2p+1$, we have\begin{align}
    c(h) = \mathfrak{C}(\alpha) + h^{2p+2}\sum_{j=0}^k C_jh^{2j},
\end{align} where $C_j$'s are constants independent of $h$.
\end{corollary}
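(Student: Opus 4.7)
The plan is to refine the analysis of \Cref{SecondMainThm} by carrying Taylor's theorem for $g$ at the origin to higher order. I interpret the identity as the asymptotic expansion
\begin{align*}
c(h) = \mathfrak{C}(\alpha) + h^{2p+2}\sum_{j=0}^k C_j h^{2j} + O(h^{2p+2k+4}),
\end{align*}
where the constants $C_0,\dots,C_k \in \mathbb{R}^{N_p}$ are independent of $h$. Using $\partial^l g(0) = 0$ for $1 \leq |l|_1 \leq 2p+1$, I would write $g(x) = 1 + \sum_{2p+2\leq |l|_1\leq 2(p+k+1)} \frac{\partial^l g(0)}{l!}x^l + R(x)$ with $|R(x)|\leq C|x|^{2(p+k+1)+1}$, and substitute this refined expansion into both pieces of the splitting \cref{SplitbyPsi} used in the proof of \Cref{SecondMainThm}.

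For the local piece, the rescaling in \cref{Splitterm1} reduces the integral to $h^{2|\xi|_1+2-\alpha}\int_{|x|\leq 1}g(hx)s_\xi(x)(1-\Psi(x))\d x$, and substituting the Taylor expansion produces a finite sum of terms $h^{2|\xi|_1+2-\alpha+|l|_1}\int_{|x|\leq 1}x^l s_\xi(x)(1-\Psi(x))\d x$ plus a remainder of order $h^{2|\xi|_1+2-\alpha+2p+2k+3}$ controlled exactly as in \cref{Splitterm2}. The key parity observation is that $s_\xi(x)(1-\Psi(x))$ is even in every coordinate, so $x^l$ integrates to zero against it unless every $l_i$ is even; consequently $|l|_1$ must be even, giving contributions only at powers $h^{2m}$ with $m\in\{0,p+1,\dots,p+k+1\}$. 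For the Poisson-sum piece, I would repeat the integration-by-parts argument of \Cref{TechnicalLemma} with the extended Taylor expansion of $g$ in place of the splitting $g(x) = g(0) + (g(x)-g(0))$. Each monomial $x^l s_\xi(x)$ is homogeneous of degree $|l|_1+2|\xi|_1+2-n-\alpha$, so the rescaling \cref{RescaledW(k)} yields a scaling factor $h^{2|\xi|_1+2-\alpha+|l|_1}$ and a Fourier coefficient $W_l(\xi,k)(2\pi k_j)^{-M}$ bounded uniformly by $C|k|^{-M}$ via \Cref{FirstLemmainErrorBound} applied to $x^l s_\xi(x)$ in place of $s_\xi(x)$. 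Odd-$|l|_1$ contributions vanish after pairing the $k$-th and $(-k)$-th terms of $\sum_{k\in\mathbb{Z}^n\setminus\{0\}}$, since the exponentials combine into $2\cos(2\pi k\cdot x/h)$, which is even in $x$ and therefore annihilates any monomial with some odd $l_i$. The Taylor remainder $R$ is treated exactly as $g(x)-g(0)$ was in the original \Cref{TechnicalLemma}, contributing an error $O(h^{2p+2k+4+2|\xi|_1-\alpha}|k|^{-M})$ that is summable in $k$.

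Dividing each component by $h^{2|\xi_i|_1+2-\alpha}$ as in \cref{RHSofLinearSystem} and collecting terms yields the claimed expansion for each $c_i(h)$, and hence for the vector $c(h)$. The main obstacle will be the bookkeeping in the Poisson-sum piece: one must verify that the coefficients generated by each additional Taylor monomial, once summed against $(2\pi k_j)^{-M}$ over $k\neq 0$, produce genuine finite constants independent of $h$, and that the parity cancellation is global rather than termwise. Both points follow from the $k\leftrightarrow -k$ pairing together with \Cref{FirstLemmainErrorBound}, in the same spirit as the original proofs.
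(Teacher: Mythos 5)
Your proposal is correct in substance, but it takes a genuinely different route from the paper. The paper proves the expansion by induction on $k$, using linearity of $g\mapsto c(h)$ rather than re-entering the Fourier analysis: it picks an auxiliary $g^*\in C_c^\infty(\mathbb{R}^2)$ with $g^*(0)=2$ and $\partial^l g^*(0)=0$ for $0\neq|l|_1\leq 2(p+k)+3$, applies \Cref{SecondMainThm} with $p$ replaced by $p+k+1$ to $g^*$ (so its difference quotient equals $2\,\mathfrak{C}(\alpha)$ up to $O(h^{2p+2k+4})$ with no intermediate terms), applies the inductive hypothesis to $\bar g=g^*-g$, and subtracts; \Cref{SecondMainThm} is thus reused as a black box. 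You instead re-run the proof of \Cref{SecondMainThm} with the Taylor polynomial of $g$ carried to order $2(p+k+1)$, tracking each monomial $x^l$ in both the local piece and the Poisson-sum piece. What your route buys is explicit formulas for the coefficients $C_j$ (integrals of $x^{l}\,s(x)x^{2\xi}(1-\Psi(x))$ over the unit ball plus lattice sums of the rescaled quantities $W_l(\xi,k)(2\pi k_j)^{-M}$), whereas the paper's induction only yields their existence; the price is the bookkeeping you anticipate, and three points should be made explicit when writing it up: (i) the number $M$ of integrations by parts must be taken even and growing with $k$ (roughly $M>2|\xi|_1+|l|_1+2-\alpha$ for the largest $|l|_1$ and $M\geq 2p+2k+4$), which is harmless since $g\in C_c^\infty$ and $\Psi\in C^\infty$; (ii) the splitting into monomials must be performed \emph{after} the $M$-fold integration by parts, exactly as the paper treats the $g(0)$ term in \Cref{TechnicalLemma}, because $x^l s_\xi(x)\Psi(x/h)$ is not integrable at infinity for large $|l|_1$, and the outer-region terms analogous to $T_{m,3}$ now involve polynomial rather than constant tails and must be re-estimated (they still only contribute $O(h^M)$); (iii) the $k\leftrightarrow-k$ pairing annihilates only monomials of odd total degree $|l|_1$, not every monomial containing an odd exponent as your wording suggests --- but this suffices, since even-$|l|_1$ monomials with an odd component produce only even powers of $h$ and are simply absorbed into the constants $C_j$, while coordinatewise parity does kill them in the local piece.
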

\begin{proof}
    If $k=0$, this is proved in the  \Cref{MainTheorem}. Assume corollary holds for some $k\in\mathbb{N}$. We prove the corollary holds for $k+1$. Fix $\xi_i\in\mathcal{I}_p,\ i\in\{1,\dots,N_p\}$. It is suffices to show $c_i(h) = C(\xi_i,\alpha) + h^{2p+2}O(\sum_{j=0}^{k+1} h^{2j})$. To this end, let $g^*\in C_c^\infty(\mathbb{R}^2)$ such that $g^*(0) = 2$ and $\partial^lg^*(0) = 0,\ l\in\mathbb{N}_0^2:0\not = |l|_1\leq 2(p+k)+3$, then by  \Cref{SecondMainThm} we have \begin{align}\label{Formula1_small_corollary_on}
        \left|\frac{1}{h^{2|\xi_i|_1+2-\alpha}}\left[\int_{\mathbb{R}^2}g^*(x)s(x)x^{2\xi_i}\d x - T_h^0[g^*\cdot s\cdot x^{2\xi_i}] \right] - 2C(\xi_i,\alpha) \right|\lesssim h^{2p+2+2k+2}.
    \end{align} Let $\bar{g} = g^* - g$, then $\bar{g}(0) = 1,\ \partial^l \bar{g}(0) = 0$ for all $l\in\mathbb{N}^2_0:0\not = |l|_1\leq 2p+1$. Applying inductive hypothesis to $\bar{g}$ we have \begin{align}\label{Formula2_small_corollary_on}
        \left|\frac{1}{h^{2|\xi_i|_1+2-\alpha}}\left[\int_{\mathbb{R}^2}\bar{g}(x)s(x)x^{2\xi_i}\d x - T_h^0[\bar{g}\cdot s\cdot x^{2\xi_i}] \right]  - C(\xi_i,\alpha)\right|\leq h^{2p+2}O\left(\sum_{j=0}^k h^{2j}\right).
    \end{align} Summing \cref{Formula1_small_corollary_on} with \cref{Formula2_small_corollary_on}, 
    \begin{align}
        |c_i(h) - C(\xi_i,\alpha)| & =\left| \frac{1}{h^{2|\xi_i|_1+2-\alpha}}\left[\int_{\mathbb{R}^2}g(x)s(x)x^{2\xi_i}\d x - T_h^0[g\cdot s\cdot x^{2\xi_i}]\right] - C(\xi_i,\alpha)\right| \\
        & \leq h^{2p+2}O\left(\sum_{j=0}^{k+1} h^{2j}\right). \nonumber
    \end{align} This completes the induction.
\end{proof}

\subsection{Off-diagonal Analysis}
In this subsection we prove order of convergence of corrected trapezoidal rule for off-diagonal singular integral \cref{CorrectedTrapz_off_diag} is $2p+2-\alpha, \ p\geq 1$. Note that in off-diagonal case, it turns out that the correction weights for $\xi = (0,0), (0,1)$ are zero and hence the correction for punctured-hole trapezoidal rule is automatic at these grid points. As a consequence, the number of correction layers is $p-1$, compared with $p+1$ in the on-diagonal case. We omit the proofs of \Cref{FirstLemma_off,,OffdiagSecondThm,,corollary:order trapez off}, as they are almost identical with that of \Cref{FirstLemmainErrorBound,,SecondMainThm,,corollary:estimate of trapez alone}, respectively.
\begin{lemma} \label{FirstLemma_off}
For all $k,\eta\in\mathbb{N}_0^n$, we have \begin{align}
    \partial^{k}\left\{\frac{x^{\eta+e_1+e_2}}{|x|^{n+\alpha}}\right\} = \frac{1}{|x|^{n+\alpha-2-|\eta|_1+|k|_1}}P_{k,\eta}\left(\frac{x}{|x|}\right),
\end{align} where $P_{k,\eta}$ is a polynomial with $\deg{P_{k,\eta}} = 2+|k|_1+|\eta|_1$.
\end{lemma}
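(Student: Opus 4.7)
The plan is to prove this lemma by induction on $|k|_1$, paralleling the proof of \Cref{FirstLemmainErrorBound} very closely. The base case $k=0$ is immediate: writing
$$\frac{x^{\eta+e_1+e_2}}{|x|^{n+\alpha}} = \frac{1}{|x|^{n-2+\alpha-|\eta|_1}}\left(\frac{x}{|x|}\right)^{\eta+e_1+e_2}$$
identifies $P_{0,\eta}(y) = y^{\eta+e_1+e_2}$, a homogeneous polynomial of degree $|\eta|_1 + 2 = 2 + |\eta|_1 + |0|_1$, as claimed.

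For the inductive step I assume the representation holds for some $k\in\mathbb{N}_0^n$ and differentiate in the direction $e_i$. The product rule gives two contributions: differentiating the radial factor $|x|^{-(n-2+\alpha-|\eta|_1+|k|_1)}$ yields a multiplier $-(n-2+\alpha-|\eta|_1+|k|_1)\,x_i/|x|^2$, while differentiating $P_{k,\eta}(x/|x|)$ via the chain rule, together with the identity $\partial_{x_i}(x_j/|x|) = \delta_{ij}/|x| - x_i x_j/|x|^3$, introduces one extra factor of $1/|x|$. Factoring out $|x|^{-(n-2+\alpha-|\eta|_1+|k+e_i|_1)}$ leaves a function of $x/|x|$ of the form
$$P_{k+e_i,\eta}(y) \;=\; -(n-2+\alpha-|\eta|_1+|k|_1)\,y_i\,P_{k,\eta}(y) \;+\; P^{(i)}_{k,\eta}(y) \;-\; y_i\sum_{j}y_j\,P^{(j)}_{k,\eta}(y),$$
where $P^{(j)}_{k,\eta} \coloneqq \partial_j P_{k,\eta}$ has degree one less than $P_{k,\eta}$.

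A quick bookkeeping check finishes the induction: the first and third terms have degree $(2+|\eta|_1+|k|_1)+1 = 2+|\eta|_1+|k+e_i|_1$, while the middle term has strictly lower degree $1+|\eta|_1+|k|_1$, so $\deg P_{k+e_i,\eta} = 2 + |\eta|_1 + |k+e_i|_1$. The only difference from \Cref{FirstLemmainErrorBound} is that the numerator is $x^{\eta+e_1+e_2}$ rather than $x^{\eta+2e_1}$, which alters only the initial polynomial $P_{0,\eta}$ while leaving the recursion and degree count untouched. There is no genuine obstacle here; the lemma is a routine multi-index computation whose purpose is purely to package the derivatives of the off-diagonal kernel in the same form used in the on-diagonal estimates, so that the proofs of the analogues of \Cref{SecondMainThm} and \Cref{corollary:estimate of trapez alone} go through verbatim.
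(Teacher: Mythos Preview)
Your proof is correct and follows exactly the approach the paper intends: the paper omits the proof of \Cref{FirstLemma_off}, stating that it is ``almost identical'' to that of \Cref{FirstLemmainErrorBound}, and your induction on $|k|_1$---with base case $P_{0,\eta}(y)=y^{\eta+e_1+e_2}$ and the same product/chain-rule recursion---reproduces that argument verbatim. Your recursion for $P_{k+e_i,\eta}$ is algebraically equivalent to the one written out in the proof of \Cref{FirstLemmainErrorBound} (just with the $j=i$ term absorbed into the sum), so nothing further is needed.
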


\begin{theorem}\label{OffdiagSecondThm}
Let $n\in\mathbb{N}$, $p\in\mathbb{N}$ and $M=\max(4p,n+1)$. Assume $g\in C_c^{N}(\mathbb{R}^n):N\geq M$ with $\partial^{k}g(0) = 0$ for all $|k|_1=1,2,\dots,2p-1$. For any $\xi\in\mathbb{N}_0^n:|\xi|_1\leq 2p-2$, we have \begin{align}
    \Big| \int_{\mathbb{R}^n} g\cdot s_{12} \cdot x^{\xi}\d x  
    - T_h^0[g\cdot s_{12}\cdot x^{\xi}]
    - g(0)h^{|\xi|_1+2-\alpha}c(\xi,\alpha)\Big|\leq C h^{2p+2-\alpha+|\xi|_1},
\end{align} where $c(\xi,\alpha)\in\mathbb{R}$ is a constant.
\end{theorem}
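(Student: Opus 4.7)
The plan is to imitate the proof of \Cref{SecondMainThm} nearly verbatim, making the substitutions dictated by (a) the different singular kernel ($s_{12}$ in place of $s$), (b) the weaker vanishing hypothesis on $g$ (derivatives through order $2p-1$ rather than $2p+1$), and (c) the modified choice $M = \max(4p, n+1)$. I would write $f = g\cdot s_{12}\cdot x^\xi$ and split
\begin{align*}
\int f\d x - T_h^0[f] = \int f(x)(1-\Psi(x/h))\d x + \Bigl(\int f(x)\Psi(x/h)\d x - T_h\bigl[f(\cdot)\Psi(\cdot/h)\bigr]\Bigr)
\end{align*}
via the cutoff $\Psi$ of \cref{Cut-offFunction}, exactly as in \cref{SplitbyPsi}.

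For the first (near-origin) piece, I would rescale $x\mapsto hx$---picking up the factor $h^{|\xi|_1 + 2 - \alpha}$ from the $(|\xi|_1 + 2 - n - \alpha)$-homogeneity of $s_{12}\cdot x^\xi$---and Taylor-expand $g$ at $0$ through order $2p-1$. The vanishing hypothesis on $g$ kills every Taylor coefficient except $g(0)$, and the remainder $O(h^{2p}|x|^{2p})$ produces the convergent radial integral $\int_0^1 r^{2p+|\xi|_1+1-\alpha}\d r$. The outcome mirrors \cref{FirstPartEstimate}:
\begin{align*}
\Bigl|\int f(x)(1-\Psi(x/h))\d x - g(0)\,h^{|\xi|_1+2-\alpha}\,c_1(\xi,\alpha)\Bigr|\leq C h^{2p+2-\alpha+|\xi|_1},
\end{align*}
with $c_1(\xi,\alpha) = \int_{|x|\leq 1} s_{12}(x)x^\xi(1-\Psi(x))\d x$.

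For the second (far) piece, I would apply Poisson summation to $\tau^h f_{\Psi,h}\in C_c^N$ to obtain
\begin{align*}
T_h[f_{\Psi,h}] - \int f(x)\Psi(x/h)\d x = \sum_{k\in\mathbb{Z}^n\setminus\{0\}} I_\xi(h,k),\qquad I_\xi(h,k) = \widehat{f_{\Psi,h}}(k/h),
\end{align*}
and establish the off-diagonal analog of \Cref{TechnicalLemma}: for $|k_j| = \max_l|k_l|$,
\begin{align*}
\bigl|I_\xi(h,k) - g(0)W(\xi,k)h^{|\xi|_1+2-\alpha}(2\pi k_j)^{-M}\bigr|\leq C|k|^{-M}\bigl(h^{2p+2-\alpha+|\xi|_1} + h^M\bigr).
\end{align*}
This is proved by integrating by parts $M$ times in $x_j$ and splitting $\partial_j^M(\Psi(x/h)g(x)s_{12}(x)x^\xi)$ into the three pieces $T_{m,1}, T_{m,2}, T_{m,3}$ as in \cref{partial^j[r(x) - g(0)Psi(x/h)]}, now invoking \Cref{FirstLemma_off} in place of \Cref{FirstLemmainErrorBound}. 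Since \Cref{FirstLemma_off} yields the same radial decay rate $|x|^{-(n+\alpha-2-|\xi|_1+|k|_1)}$ as its on-diagonal counterpart, the three estimates pass through verbatim with $\hat p = 2p-1$ substituted for $2p+1$.

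Finally, setting $c(\xi,\alpha) = c_1(\xi,\alpha) - c_2(\xi,\alpha)$ with $c_2(\xi,\alpha) = \sum_{k\neq 0} W(\xi,k)(2\pi k_j)^{-M}$ and summing over $k$ (absolutely convergent because $M\geq n+1$) closes the argument. The only real place not to slip---and what I would flag as the main technical point---is the exponent bookkeeping: the conditions $M\geq 4p$ and $|\xi|_1\leq 2p-2$ must simultaneously guarantee that $W(\xi,k)$ is well defined (i.e.\ $\partial_j^M(\Psi\cdot s_{12}\cdot x^\xi)$ is integrable at infinity, which needs $M > 2+|\xi|_1-\alpha$) and that the exponent $\hat p + 3 - \alpha + |\xi|_1 - M$ produced by the $T_{m,l}$ estimates, after multiplication by $(h/|k_j|)^M$, collapses to the advertised $h^{2p+2-\alpha+|\xi|_1}$. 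Both reduce to $M-|\xi|_1\geq 2p+2$, which is immediate from the hypotheses; beyond this, no further obstacles are expected.
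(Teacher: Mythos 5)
Your proposal is correct and is essentially the paper's own argument: the paper explicitly omits the proof, noting (see the remark after \cref{OffdiagSecondThm}) that it is the verbatim proof of \cref{SecondMainThm} with $M$, $\xi$, and $\hat p$ adjusted and with \cref{FirstLemma_off} replacing \cref{FirstLemmainErrorBound}, which is exactly the substitution scheme you carry out. Your exponent bookkeeping, reducing everything to $M-|\xi|_1\geq 2p+2$ (equivalently $M-(\hat p+2-\alpha+|\xi|_1)\geq 1+\alpha$ with $\hat p=2p-1$), matches the role that $M-(\hat p+2-\alpha+2|\xi|_1)\geq 1+\alpha$ plays in the on-diagonal proof.
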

\begin{remark}
By replacing $M,\xi$ in \cref{SecondMainThm} with the ones given in \cref{OffdiagSecondThm} and replacing \cref{FirstLemmainErrorBound} with \cref{FirstLemma_off}, the proof of \cref{OffdiagSecondThm} is verbatim proof of \cref{SecondMainThm}.
\end{remark}

\begin{corollary}\label{corollary:order trapez off}
Let $n\in\mathbb{N}$, $p\in\mathbb{N}$. Assume $g\in C_c^{N}(\mathbb{R}^n)$ with $N\geq \max(2p+2,n+1)$ such that $\partial^{k}g(0) = 0$ for all $|k|_1=0,1,\dots,2p-1$.Then \begin{align}
    \Big| \int_{\mathbb{R}^n} g\cdot s_{12}\d x  - T_h^0[g\cdot s_{12}]\Big|\leq C h^{2p+2-\alpha}.
\end{align} 
\end{corollary}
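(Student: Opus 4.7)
The plan is to mirror the proof of \Cref{corollary:estimate of trapez alone} with exponent bookkeeping adjusted to reflect the reduced vanishing order on $g$, and with \Cref{FirstLemma_off} taking the place of \Cref{FirstLemmainErrorBound}. Because $s_{12}$ and $s$ share the same near-origin scaling $|x|^{-(n-2+\alpha)}$, the overall structure of the argument transfers without modification. First I would split using the cut-off function $\Psi$ from \cref{Cut-offFunction} and invoke the Poisson summation decomposition as in \cref{DecompositionOfT_h,TrapzIntegralDiff}, yielding
\begin{align*}
\int_{\mathbb{R}^n} g\cdot s_{12}\,\d x - T_h^0[g\cdot s_{12}] = \int_{\mathbb{R}^n} g(x) s_{12}(x)\bigl(1-\Psi(x/h)\bigr)\,\d x - \sum_{k\in\mathbb{Z}^n\setminus\{0\}} I_0(h,k),
\end{align*}
where $I_0(h,k) = \int_{\mathbb{R}^n} \exp\!\bigl(\tfrac{-2\pi i}{h}k\cdot x\bigr) g(x) s_{12}(x) \Psi(x/h)\,\d x$.

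For the inner-region integral, the hypothesis $\partial^{k} g(0)=0$ for $|k|_1\leq 2p-1$ combined with Taylor's theorem yields $|g(x)|\leq C|x|^{2p}$. Since $|s_{12}(x)|\leq |x|^{2-n-\alpha}$, switching to spherical coordinates gives
\begin{align*}
\left|\int_{|x|\leq h} g(x) s_{12}(x)\bigl(1-\Psi(x/h)\bigr)\,\d x\right| \leq C\int_0^h r^{2p+1-\alpha}\,\d r \leq C h^{2p+2-\alpha},
\end{align*}
which already matches the target rate.

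For the Fourier tail, fix $k\in\mathbb{Z}^n\setminus\{0\}$ and pick index $j$ with $|k_j|=\max_l|k_l|$. Repeated integration by parts in $x_j$, $N$ times, gives
\begin{align*}
I_0(h,k) = \left(\frac{-ih}{2\pi k_j}\right)^N \int_{\mathbb{R}^n} \partial_j^N\bigl(\Psi(x/h)g(x) s_{12}(x)\bigr)\exp\!\bigl(\tfrac{-2\pi i}{h}k\cdot x\bigr)\,\d x.
\end{align*}
Applying Leibniz to $\partial_j^N$ and splitting the domain into the annulus $h/2\leq|x|\leq h$ (where derivatives of $\Psi(x/h)$ contribute factors $h^{-(m-l)}$) and the region $h<|x|<R$ (with $\mathrm{supp}\, g\subset B_R(0)$), one uses \Cref{FirstLemma_off} to bound $|\partial_j^{N-m}s_{12}(x)|\leq C|x|^{-(n-2+\alpha+N-m)}$ and the Taylor vanishing of $g$ to bound $|\partial_j^{l} g(x)|\leq C|x|^{\max(2p-l,0)}$. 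The computations of \cref{FirstEstinExtraConvOrder,SecondEstinExtraConvOrder} then transfer verbatim with $M$ replaced by $N$, $\xi$ set to $0$, and each occurrence of $2p+4-\alpha$ replaced by $2p+2-\alpha$, since $g$ vanishes here to order $2p$ rather than $2p+2$. This yields $|I_0(h,k)|\leq C|k_j|^{-N}(h^{2p+2-\alpha}+h^N)\leq C|k|^{-N}h^{2p+2-\alpha}$ by using $N\geq 2p+2$.

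Finally, the assumption $N\geq n+1$ guarantees $\sum_{k\neq 0}|k|^{-N}<\infty$, so summing the tail bound and combining with the inner-region estimate produces the claimed $\bigl|\int g\cdot s_{12}\,\d x - T_h^0[g\cdot s_{12}]\bigr|\leq Ch^{2p+2-\alpha}$. The only real obstacle is the exponent bookkeeping, in particular verifying that the boundary-annulus contribution (the analogue of $T_{m,1}$, scaled by $h^{-(m-l)}$) still balances the bulk contribution (the analogue of $T_{m,2}$) after the shift from $2p+4-\alpha$ to $2p+2-\alpha$; the shifted exponent $\hat{p}+3-\alpha+2|\xi|_1-M$ appearing in \cref{FirstEstinExtraConvOrder} becomes $2p+1-\alpha-N$, which still combines correctly with the prefactor $(h/k_j)^N$ to yield the stated rate. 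Because $s_{12}$ and $s$ share homogeneity near the origin, no new analytical idea is required, which is why the authors choose to omit the proof.
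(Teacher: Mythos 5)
Your proposal is correct and follows essentially the same route the paper has in mind: it omits this proof precisely because it is the argument of \cref{corollary:estimate of trapez alone} repeated with \cref{FirstLemma_off} in place of \cref{FirstLemmainErrorBound} and the vanishing order of $g$ lowered from $2p+2$ to $2p$, which is exactly what you carry out. The only blemish is the final bookkeeping remark: with $\hat{p}=2p-1$ the shifted exponent is $\hat{p}+3-\alpha-N=2p+2-\alpha-N$, not $2p+1-\alpha-N$, a harmless slip since your stated bounds $|I_0(h,k)|\leq C|k_j|^{-N}(h^{2p+2-\alpha}+h^{N})$ and the inner-region estimate already carry the correct exponents.
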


We now state and prove the main theorem for the off-diagonal case.
\begin{theorem}\label{OffdiagThirdThm}
Let $p\in\mathbb{N}$. Given $\phi\in C_c^{N}(\mathbb{R}^2)$ with $N\geq 2p+2$ and the singular function $s_{12}(x) = \frac{x_1x_2}{|x|^{2+\alpha}},\ x\in\mathbb{R}^2$, $0<\alpha<2$. Denote $f \coloneqq \phi \cdot s_{12}$ and let $g\in C_c^{\infty}(\mathbb{R}^2)$ such that $g$ is radially symmetric, $g(0)=1$ and $\partial^{k} g(0)=0$ for all $|k|_1 =1,\dots, 2p-1$. Let $\omega(h)$ be the solution of \cref{LinearSysOff_Second} for this $g$, then $\omega(h)$ converges to some $\bar{\omega}\in\mathbb{R}^{N_p}$ and the convergence rate is given by \begin{align}  
    |\omega(h)-\bar{\omega}|_2= O(h^{2p}).
\end{align} Moreover, there exists $C>0$ such that \begin{align}
    \left|Q_h^p[f] - \int_{\mathbb{R}^2}f(x)\ dx\right|\leq C h^{2p+2-\alpha}.
\end{align}
\end{theorem}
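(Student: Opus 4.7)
The plan is to transplant the proof of \Cref{MainTheorem} to the off-diagonal setting with three adjustments. The Taylor polynomial of $\phi$ about the origin will be taken to degree $2p-1$ (matching the $p-1$ correction layers indexed by $\mathcal{I}_p$ in \cref{I_p off}); the test exponents in the linear system \cref{LinearSysOffDiagonal_firstform} have the shifted form $2\xi - e_1 - e_2$; and we invoke \Cref{OffdiagSecondThm} and \Cref{corollary:order trapez off} in place of their on-diagonal counterparts. The overall structure (convergence of $c(h)$, matrix perturbation argument for $\omega(h)$, splitting of the quadrature error into three pieces) carries over verbatim.

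First I would establish $\lim_{h\to 0}\omega(h)=\bar{\omega}$. For each $\xi_j\in\mathcal{I}_p$, apply \Cref{OffdiagSecondThm} with multi-index $\eta_j \coloneqq 2\xi_j - e_1 - e_2$; since $|\eta_j|_1 = 2|\xi_j|_1 - 2 \leq 2p-2$, the hypothesis is met. Dividing the resulting identity by the normalization $h^{2|\xi_j|_1 - \alpha}$ appearing in \cref{eq:c_h off} and using $g(0)=1$ yields $c_j(h) = c(\eta_j,\alpha) + O(h^{2p})$, so $c(h) \to \mathfrak{C}(\alpha)\coloneqq(c(\eta_1,\alpha),\ldots,c(\eta_{N_p},\alpha))^T$. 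Defining $\bar{\omega}$ by $\boldsymbol{K}\bar\omega = \mathfrak{C}(\alpha)$ (the non-singularity of $\boldsymbol{K}$ is deferred to \cref{sectionNSCM}), the same perturbation argument as in \cref{Conv_Omega1,Conv_I_minus_G(h),Conv_omega(h)}, together with the Taylor bound $|1 - g(\xi_k h)| \leq Ch^{2p}$ available from $\partial^k g(0) = 0$ for $1\leq|k|_1\leq 2p-1$, gives $|\omega(h)-\bar\omega|_2 \leq Ch^{2p}$.

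Next, let $P_\phi$ be the Taylor polynomial of $\phi$ at the origin of degree $2p-1$, and set $\tilde\phi \coloneqq P_\phi \cdot g$, so $\partial^k(\phi-\tilde\phi)(0)=0$ for all $|k|_1 \leq 2p-1$. Decompose
\begin{equation*}
Q_h^p[f] - I[f] = A_h^p[\phi-\tilde\phi] + (T_h^0 - I)[(\phi-\tilde\phi)\cdot s_{12}] + (Q_h^p - I)[\tilde\phi\cdot s_{12}] \eqqcolon E_1 + E_2 + E_3.
\end{equation*}
Taylor's theorem bounds $|(\phi-\tilde\phi)(\beta h)| \leq Ch^{2p}$ uniformly on the finite correction stencil, giving $|E_1| \leq Ch^{2p+2-\alpha}$ after multiplying by $a(h)=h^{2-\alpha}$; \Cref{corollary:order trapez off} applied to $\phi - \tilde\phi$ yields $|E_2| \leq Ch^{2p+2-\alpha}$. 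For $E_3$, expand $\tilde\phi$ monomial by monomial; a parity argument (radial symmetry of $g$ and $1/|x|^{2+\alpha}$, the factor $x_1x_2$ in $s_{12}$, and the $\operatorname{sgn}(\beta_1\beta_2)$ factor in $A_h^p$) forces the integral, the trapezoidal sum, and the correction functional to vanish jointly unless $\xi_1,\xi_2$ are both odd. The surviving monomials are precisely of the form $x^{2\xi'-e_1-e_2}$, $\xi'\in\mathcal{I}_p$, so by \cref{LinearSysOffDiagonal_firstform} each surviving $(Q_h^p - I)[g\cdot x^{2\xi'-e_1-e_2}\cdot s_{12}]$ rewrites as $h^{2-\alpha}\sum_{\gamma\in\mathcal{I}_p}(\bar\omega_\gamma - \omega_\gamma(h))\sum_{\beta\in\mathcal{G}_\gamma}\operatorname{sgn}(\beta_1\beta_2)(\beta h)^{2\xi'-e_1-e_2}g(\beta h)$, bounded by $Ch^{2-\alpha}\cdot|\omega(h)-\bar\omega|_2 \leq Ch^{2p+2-\alpha}$.

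The main obstacle, as in the on-diagonal proof, is the parity bookkeeping in $E_3$: one must verify that the integral, the punctured-hole trapezoidal sum, and the correction functional $A_h^p[g\cdot x^\xi]$ all vanish simultaneously unless $\xi_1,\xi_2$ are both odd. The sign twist $\operatorname{sgn}(\beta_1\beta_2)$ in $A_h^p$ makes the last check slightly more delicate than its on-diagonal analogue, but the fourfold symmetry of each $\mathcal{G}_{q-m,m}$ under $(\beta_1,\beta_2)\mapsto(\pm\beta_1,\pm\beta_2)$ combined with the radial symmetry of $g$ forces the required cancellation.
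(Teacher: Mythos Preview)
Your proposal is correct and follows essentially the same approach as the paper's own proof: the same convergence argument for $\omega(h)$ via \Cref{OffdiagSecondThm} and the matrix perturbation bound, the same $E_1+E_2+E_3$ splitting with the degree-$(2p-1)$ Taylor polynomial, and the same parity reduction followed by the $|\omega(h)-\bar\omega|_2$ estimate for $E_3$. One small bookkeeping point: to reduce the surviving odd--odd monomials $x^{2\xi'-e_1-e_2}$ to precisely those with $\xi'\in\mathcal{I}_p$ (where $\xi'_1\geq\xi'_2$), you also need the swap symmetry $(x_1,x_2)\leftrightarrow(x_2,x_1)$ of $s_{12}$, $g$, and the eight-point stencil $\mathcal{G}_{q-m,m}$, not only the sign flips you mention; the paper invokes this explicitly.
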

\begin{proof}
    We first show the limit of the solution $\omega(h)$ of the linear system \cref{LinearSysOff_Second} exists. To this end, we apply  \cref{OffdiagSecondThm} to each $c_i(h)$ in \cref{eq:c_h off}, yielding \begin{align}\label{Conv_c(h)_off}
        c_i(h) & = \frac{1}{h^{2|\xi_i|_1-\alpha}}\left(\int g(x)s(x)x^{2\xi_i-e_1-e_2}\ dx - T_h^0[g\cdot s\cdot x^{2\xi_i-e_1-e_2}]\right) \nonumber \\
        & = \frac{1}{h^{2|\xi_i|_1-\alpha}} \left(g(0)h^{2|\xi_i|_1-\alpha}c(\xi_i,\alpha) + O(h^{2p-\alpha+2|\xi_i|_1})\right) \nonumber \\
        & = c(\xi_i,\alpha) + O(h^{2p}).
    \end{align} Note that we write $c(2\xi_i-e_1-e_2,\alpha)$ as $c(\xi_i,\alpha)$ for convenience. Define \begin{align} \label{Const_C_off}
        \mathfrak{C}(\alpha) & \coloneqq (c(\xi_1,\alpha),\dots,c(\xi_{N_p},\alpha)),
    \end{align} and let $\bar{\omega}$ be the solution of the linear system \begin{align}\label{omega_linear_sys_off}
        \boldsymbol{K}\bar{\omega} = \mathfrak{C}(\alpha).
    \end{align} Assuming $\boldsymbol{K}$ is non-singular and using the arguments from \crefrange{Const_C_on}{Conv_Omega1}, we obtain \begin{align}
        |\omega(h) - \bar{\omega}|_2 
        \leq \|\boldsymbol{G}(h)^{-1}\|_2 \ \|\boldsymbol{I}-\boldsymbol{G}(h)\|_2\ \|\boldsymbol{K}^{-1}\|_2 \ |c(h)|_2 + \|\boldsymbol{K}^{-1}\|_2\ |c(h)-\mathfrak{C}(\alpha)|_2. \label{Conv_omega_off}
    \end{align} A Taylor expansion of $g$ around the origin gives \begin{align}\label{Conv_g_off}
        |1- g(\xi_k h)|\leq C h^{2p}, \quad k=1,\dots,N_p,
    \end{align} and hence \begin{align}
        \|\boldsymbol{I} - \boldsymbol{G}(h)\|_2 = \max\{|1-g(\xi_k h)|:k=1,\dots,N_p\} \lesssim h^{2p}.
    \end{align} From \cref{Conv_omega_off}, \cref{Conv_c(h)_off} and \cref{Conv_g_off} we therefore have \begin{align}\label{Conv_omega(h)_off}
        |\omega(h) - \bar{\omega}|_2\leq C h^{2p}.
    \end{align}
    
    We now prove the the accuracy of the corrected trapezoidal rule \cref{CorrectedTrapz_off_diag}. We denote the Taylor's polynomial of $\phi$ at the origin by \begin{align}
        P_{\phi}(x)=\sum_{\substack{\xi\in\mathbb{N}_0^2,\\ 0\leq |\xi|_1\leq 2p-1}}\frac{x^{\xi}}{\xi!}\frac{\partial^{\xi}\phi(0)}{\partial x^{\xi}},
    \end{align} and define $\tilde{\phi}=P_{\phi}\cdot g$. It is clear that \begin{align}\label{PhisTaylor_off}
    \partial^{k}[\phi(x)-\tilde{\phi}(x)]|_{x=0}=0,\quad k\in\mathbb{N}_0^2:|k|_1=0,1,\dots,2p-1.
\end{align} Writing $I[f]\coloneqq \int_{\mathbb{R}^2} f(x)dx$, we can split $Q_h^p[f]-I[f]$ as \begin{align*}
    Q_h^p[f]-I[f] & = A_h^p[\phi-\tilde{\phi}]+(T_h^0-I)[(\phi-\tilde{\phi})\cdot s_{12}]+ (Q_h^p-I)[\tilde{\phi}\cdot s_{12}] \\
    & \eqqcolon E_1+E_2+E_3 .
\end{align*} To estimate $E_1$, from \cref{PhisTaylor_off} and Taylor's theorem we have \begin{align*}
    |\phi(x)-\tilde{\phi}(x)|\leq \max_{\substack{x\in\mathbb{R}^2,\\ k\in\mathbb{N}_0^2,\\|k|_1=2p}}\left(\frac{|\partial^k(\phi - \tilde{\phi})(x)|}{k!}\right)\ |x|^{2p},
\end{align*} From definition of $A_h^p$ in \cref{CorrectedTrapz_off_diag} we obtain \begin{align}\label{E1Est_off}
    |E_1| = |A_h^p[\phi-\tilde{\phi}]| 
    \leq h^{2-\alpha}\max_{1\leq j\leq N_p}|\bar{\omega}_j|\max_{\xi\in\mathcal{G}_j:1\leq j\leq N_p}|\phi(\xi h)-\tilde{\phi}(\xi h)| 
    \leq h^{2p+2-\alpha}.
\end{align} To bound $E_2$, we apply \cref{corollary:order trapez off} to $\phi-\tilde{\phi}$, giving rise to \begin{align}\label{E2Est_off}
    |E_2| = |(T_h^0-I)[(\phi-\tilde{\phi})\cdot s_{12}]|\leq C h^{2p+2-\alpha}.
\end{align} Finally, it remains to estimate $E_3$. Since \begin{align}\label{PreE_3_off}
    |E_3|&= |Q^p_h[\tilde{\phi}\cdot s_{12}] - I[\tilde{\phi}\cdot s_{12}]| \nonumber \\
    &\leq \sum_{\substack{\xi\in\mathbb{N}_0^2,\\0\leq|\xi|_1\leq 2p-1}}\frac{1}{\xi!}\left|\frac{\partial^{\xi}\phi(0)}{\partial x^{\xi}}\right||Q_h^p[g\cdot s_{12}\cdot x^{\xi}] - I[g\cdot s_{12}\cdot x^{\xi}]| \nonumber \\
    &\leq \sum_{\substack{\xi\in\mathbb{N}_0^2,\\0\leq|\xi|_1\leq 2p-1}}|T_h^0[g\cdot s_{12}\cdot x^{\xi}] - I[g\cdot s_{12}\cdot x^{\xi}]+A^p_h[g\cdot x^{\xi}]|.
\end{align} We claim $T_h^0[g\cdot s_{12}\cdot x^{\xi}]$, $I[g\cdot s_{12}\cdot x^{\xi}]$ and $A^p_h[g\cdot x^{\xi}]$ vanish when at least one of $\xi_1$ or $\xi_2$ is even. To see this, by radial symmetry of $g$ and $\frac{1}{|x|^{2+\alpha}}$, we have \begin{align} \label{OddpowerI_off}
    &I[g\cdot s_{12}\cdot x^{\xi}]  = \int_{\mathbb{R}^2}\frac{g(x)}{|x|^{2+\alpha}}x^{\xi+e_1+e_2}\ \d x \nonumber \\
    & = \int_{[0,\infty)^2}\frac{g(x)}{|x|^{2+\alpha}}[x^{\xi+e_1+e_2}+(-x_1)^{\xi_1+1}(x_2)^{\xi_2+1}+(x_1)^{\xi_1+1}(-x_2)^{\xi_2+1}+ (-x_1)^{\xi_1+1}(-x_2)^{\xi_2+1}] \ \d x\nonumber \\
    & = \int_{[0,\infty)^2}g(x)s_{12}(x)x^{\xi}[1-(-1)^{\xi_1}-(-1)^{\xi_2}+(-1)^{|\xi|_1}]\ \d x.
\end{align} It is easy to see that \cref{OddpowerI_off} vanish when at least one of $\xi_1$ or $\xi_2$ is even. Argument for $T_h^0[g\cdot s_{12}\cdot x^{\xi}]=0$ under same condition for $\xi$ closely resembles above. Recall that $g(\xi h)$ is constant over each $\mathcal{G}_{q-m,m}$ and if $(a_1,a_2)\in\mathcal{G}_{q-m,m}$ then $(\pm a_1,\pm a_2),(\mp a_1,\pm a_2), (\pm a_2,\pm a_1), (\mp a_2,\pm a_1)\in\mathcal{G}_{q-m,m}$ for all $(q-m,m)\in\mathcal{I}_p$. Denote $g_{q-m,m} = g(\xi h),\ \text{for any }\xi\in\mathcal{G}_{q-m,m}$, we have \begin{align}
    A^p_h[g\cdot x^{\xi}] &= h^{2-\alpha}\sum_{(q-m,m)\in\mathcal{I}_p}\bar{\omega}_{q-m,m} g_{q-m,m} \sum_{\beta\in\mathcal{G}_{q-m,m}}\text{sgn}(\beta_1\beta_2)(\beta h)^{\xi}, \nonumber \\
    \sum_{\beta\in\mathcal{G}_{q-m,m}}\text{sgn}(\beta_1\beta_2)(\beta h)^{\xi} &= (q-m)^{\xi_1}m^{\xi_2}\Big(1-(-1)^{\xi_1}-(-1)^{\xi_2}+(-1)^{|\xi|_1}\Big) \nonumber\\
    & + (1-\delta_{q-m,m})m^{\xi_1}(q-m)^{\xi_2}\Big(1-(-1)^{\xi_1}-(-1)^{\xi_2}+(-1)^{|\xi|_1}\Big) \nonumber\\
    &= 0,
\end{align} if at least one of $\xi_1$ or $\xi_2$ is even. 

By another round of symmetry argument, it is also clear that for all $\xi\in\mathbb{N}_0^2$, \begin{align}
    I(g\cdot s_{12}\cdot x^{(\xi_1,\xi_2)}) & = I(g\cdot s_{12} \cdot x^{(\xi_2,\xi_1)}),\\
    T_h^0(g\cdot s_{12}\cdot x^{(\xi_1,\xi_2)}) & = T_h^0(g\cdot s_{12}\cdot x^{(\xi_2,\xi_1)}), \\
    A^p_h(g\cdot x^{(\xi_1,\xi_2)}) & = A^p_h(g\cdot x^{(\xi_2,\xi_1)}).
\end{align} When $\xi=0$, using the odd symmetry of $s_{12}$ and property of $A_h^p$, we have \begin{align}
    I(g\cdot s_{12}) & = T_h^0(g\cdot s_{12}) = 0, \\
    A_h^p(g) &= h^{2-\alpha}\sum_{(q-m,m)\in\mathcal{I}_p}\omega_{q-m,m} g_{q-m,m} \underbrace{\sum_{\beta\in\mathcal{G}_{q-m,m}}\text{sgn}(\beta_1\beta_2)}_{=0} = 0.
\end{align} Therefore, we can now re-write \cref{PreE_3_off} as \begin{align}\label{Pre2E3_off}
    |E_3| &\leq \sum_{|\xi|_1 = 0,2,\dots,2p-2}|T_h^0[g\cdot s_{12}\cdot x^{\xi}] - I[g\cdot s_{12}\cdot x^{\xi}]+A^p_h[g\cdot x^{\xi}]|\nonumber \\
    &= \sum_{\xi \in \mathcal{I}_p} |T_h^0[g\cdot s_{12}\cdot x^{2\xi-e_1-e_2}] - I[g\cdot s_{12}\cdot x^{2\xi-e_1-e_2}]+A^p_h[g\cdot x^{2\xi -e_1-e_2}]|.
\end{align} By \cref{LinearSysOff_Second} and \cref{Conv_omega(h)_off}, $E_3$ can be estimated by \begin{align}\label{E3Est_off}
    |E_3|&\leq Ch^{2-\alpha}\sum_{\xi\in\mathcal{I}_p}\sum_{(q-m,m)\in\mathcal{I}_p}|\omega_{q-m,m}(h)-\bar{\omega}_{q-m,m}|\left(\sum_{\beta\in\mathcal{G}_{q-m}^m}|g(\beta h)||\beta h|^{2|\xi|_1-e_1-e_2}\right) \nonumber \\
    & \leq Ch^{2-\alpha}|\omega(h) - \bar{\omega}|_2 \leq Ch^{2p+2-\alpha}.
\end{align}
Putting together \cref{E1Est_off}, \cref{E2Est_off} and \cref{E3Est_off}, we proved \begin{align}
     |Q_h^p[f] - I[f]|\leq C h^{2p+2-\alpha}.
\end{align}
\end{proof}

The following corollary provides a Taylor's expansion of $c(h)$ as in \cref{LinearSysOff_Second}. It is used in Richardson extrapolation for numerical computation of correction weights. Recall that $\mathfrak{C}(\alpha) = \lim_{h\to 0}c(h)$ in \cref{Const_C_off}. We skip the proof that runs almost identically with that of \cref{Taylor_of_c(h)_on}.
\begin{corollary}\label{Taylor_of_c(h)_off}
Let $p,k\in\mathbb{N}_0:p\geq 1$. For any $g\in C_c^\infty(\mathbb{R}^2)$ such that $g(0) = 1$, $\partial^lg(0) = 0,\ l\in\mathbb{N}_0^2$ with $0\not = |l|_1\leq 2p-1$, we have\begin{align}
    c(h) = \mathfrak{C}(\alpha) + h^{2p}\sum_{j=0}^k C_jh^{2j},
\end{align} where $C_j$'s are constants independent of $h$.
\end{corollary}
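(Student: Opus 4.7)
The plan is to mirror the inductive proof of \cref{Taylor_of_c(h)_on} essentially verbatim, with only the exponents and regularity thresholds shifted by $2$ to account for the off-diagonal scaling. I will proceed by induction on $k \in \mathbb{N}_0$.

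For the base case $k = 0$, the estimate $c(h) = \mathfrak{C}(\alpha) + O(h^{2p})$ is exactly what was established in \cref{Conv_c(h)_off} during the proof of \cref{OffdiagThirdThm}, as an immediate consequence of \cref{OffdiagSecondThm} together with the scaling factor $h^{2|\xi_i|_1 - \alpha}$ that appears in \cref{eq:c_h off}. This yields the expansion with the single coefficient $C_0$.

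For the inductive step, I assume the expansion holds at level $k$ and fix a component $c_i(h)$ indexed by $\xi_i \in \mathcal{I}_p$. I will choose a radially symmetric auxiliary $g^* \in C_c^\infty(\mathbb{R}^2)$ with $g^*(0) = 2$ and $\partial^l g^*(0) = 0$ for all $l \in \mathbb{N}_0^2$ with $1 \leq |l|_1 \leq 2(p+k)+1$; such a function is constructed in the standard way from a radial bump. Applying \cref{OffdiagSecondThm} to $g^*$ with $p$ replaced by $p+k+1$ and with the monomial $x^{2\xi_i - e_1 - e_2}$ produces the high-order estimate
\begin{align*}
\left|\frac{1}{h^{2|\xi_i|_1 - \alpha}}\Big[\int_{\mathbb{R}^2} g^* \cdot s_{12}\cdot x^{2\xi_i-e_1-e_2}\d x - T_h^0[g^*\cdot s_{12}\cdot x^{2\xi_i-e_1-e_2}]\Big] - 2\, c(\xi_i,\alpha)\right| \lesssim h^{2p+2(k+1)}.
\end{align*}
Setting $\bar g := g^* - g$, one verifies $\bar g(0) = 1$ and $\partial^l \bar g(0) = 0$ for $1 \leq |l|_1 \leq 2p - 1$, so the inductive hypothesis applies to $\bar g$ and supplies the level-$k$ expansion of the analogous scaled quantity. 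Subtracting the $\bar g$-expansion from the $g^*$-expansion, and using linearity of the integral and of $T_h^0$, isolates the level-$(k+1)$ expansion of $c_i(h)$; the new coefficient $C_{k+1}$ is read off as the appropriate difference of coefficients on the two sides.

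Since every ingredient is already in place---\cref{OffdiagSecondThm} provides expansions of arbitrarily high order, and the two-function subtraction trick is insensitive to on- versus off-diagonal status---no genuine obstacle is anticipated. The only care required is bookkeeping: the prefactor in $c_i(h)$ is $h^{2|\xi_i|_1-\alpha}$ rather than $h^{2|\xi_i|_1+2-\alpha}$, and the vanishing-derivative threshold on $g^*$ at each step of the induction drops by $2$ relative to the on-diagonal analogue. In this sense the proof is a straightforward transcription of that of \cref{Taylor_of_c(h)_on}, which is precisely why the authors remark that it can be skipped.
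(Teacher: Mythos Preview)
Your proposal is correct and follows exactly the route the paper indicates: it transcribes the inductive argument of \cref{Taylor_of_c(h)_on}, shifting the vanishing-derivative threshold on $g^*$ from $2(p+k)+3$ down to $2(p+k)+1$ and the resulting exponent from $h^{2p+2+2(k+1)}$ to $h^{2p+2(k+1)}$, then subtracting the $\bar g$-expansion to isolate the level-$(k+1)$ term. This is precisely why the paper omits the proof.
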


\section{Non-singularity of Coefficient Matrices \texorpdfstring{$\boldsymbol{K}$}{TEXT}} \label{sectionNSCM}
\subsection{The On-diagonal Case}
In this subsection we prove the on-diagonal coefficient matrix $\boldsymbol{K}$ defined in \cref{MatrixK} is non-singular. We order all $\xi\in\mathcal{I}_p$ as follows, $\xi\in\mathcal{I}_p$ with $\xi_1 = 0$ first, followed by all $\xi\in\mathcal{I}_p$ with $\xi_2=0$ and then followed by the rest $\xi\in\mathcal{I}_p$ with $\xi_1\xi_2 \not= 0$, where we denote the three subsets by $\mathcal{C}_{1},\mathcal{C}_{2},\mathcal{C}_{3}$ respectively. Namely, $\mathcal{C}_{1}$ is arranged by \begin{align}
    \mathcal{C}_{1} = \{(0,r):0\leq r\leq p\} 
        = \{(0,0),(0,1),\dots,(0,p)\},
\end{align} and similarly, \begin{align}
    \mathcal{C}_{2} &= \{(s,0):1\leq s\leq p\} 
        =\{(1,0),(2,0),\dots,(p,0)\}.
\end{align} The arrangement for $\mathcal{C}_{3}$ can in fact be arbitrary, thus we simply choose a natural one as follows, pictorially, we can list all elements in $\mathcal{C}_{3}$ as a triangle \begin{align}
    \begin{array}{cccccc}
        (1,1) & (1,2) & \dots & (1,k) & \cdots & (1,p-1) \\
         \downarrow & (k-1,2) \nearrow & \vdots & \nearrow\\
         (k,1)\nearrow & \dots & (k,p-k) & \\
         \vdots & \nearrow & & \\
         (p-1,1) & & &
    \end{array}\
\end{align} We arrange all elements in $\mathcal{C}_{3}$ from $(1,1)$, followed by diagonal $(2,1),(1,2)$, $\dots$, followed by diagonal $(k,1),\dots,(1,k)$, $\dots$, i.e. \begin{align}
    \mathcal{C}_{3} = \{(1,1),(2,1),(1,2),\dots,(p-1,1),(p-2,2),\dots,(1,p-1)\}.
\end{align} We also arrange $\mathcal{G}$ in the same way. We first show matrix $\boldsymbol{K}$ organized under this arrangement has a special structure. \begin{lemma}\label{StructureLemma}
Let $p\geq 1$, there exist an ordering on $\mathcal{I}_p$ such that \begin{enumerate}
    \item $K_{1,1} = 1$ while $K_{j,1} = 0$ for all $2\leq j\leq N_p$.
    \item $\boldsymbol{K}$ can be expressed as a block matrix of the form \begin{align}\label{Matrix_K_on}
        \boldsymbol{K} = \begin{pmatrix} \boldsymbol{A} & \boldsymbol{B} \\ \boldsymbol{C} & \boldsymbol{D} \end{pmatrix},
    \end{align} such that $\boldsymbol{A}\in \mathbb{R}^{(2p+1)\times (2p+1)}$, $\boldsymbol{C} = \boldsymbol{0}\in \mathbb{R}^{\frac{p(p-1)}{2}\times (2p+1)}$, $\boldsymbol{D}\in\mathbb{R}^{\frac{p(p-1)}{2}\times \frac{p(p-1)}{2}}$. As a result, \begin{align}
        \det \boldsymbol{K} = \det \boldsymbol{A} \cdot \det \boldsymbol{D}.
    \end{align}
\end{enumerate} Moreover, the block $\boldsymbol{A}$ is non-singular.
\end{lemma}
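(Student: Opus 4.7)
The plan is to exploit the coordinate-axis structure of the ordering: the sets $\mathcal{G}_j$ corresponding to $\gamma_j\in\mathcal{C}_1$ or $\mathcal{C}_2$ live on a single axis, while the $\xi_i\in\mathcal{C}_3$ have both components strictly positive, so that the two interact only trivially.

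First I would dispatch part (1). Since $\gamma_1=(0,0)$, the definition in \cref{MatrixK} gives $\mathcal{G}_1=\{(0,0)\}$, so $K_{i,1}=0^{2\xi_i}=0^{2\xi_{i,1}}0^{2\xi_{i,2}}$. This equals $1$ precisely when $\xi_i=(0,0)$ and $0$ otherwise, which yields $K_{1,1}=1$ and $K_{j,1}=0$ for $j\geq 2$.

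Next I turn to part (2). The columns of $\boldsymbol{C}$ are indexed by $\gamma_j\in\mathcal{C}_1\cup\mathcal{C}_2$; explicitly, $\mathcal{G}_{(0,k)}=\{(0,\pm k)\}$ and $\mathcal{G}_{(s,0)}=\{(\pm s,0)\}$. For a row $\xi_i\in\mathcal{C}_3$ we have $\xi_{i,1},\xi_{i,2}\geq 1$, so
\begin{align*}
K_{i,j}\big|_{\gamma_j=(0,k)}&=2\cdot 0^{2\xi_{i,1}}k^{2\xi_{i,2}}=0,\\
K_{i,j}\big|_{\gamma_j=(s,0)}&=2\cdot s^{2\xi_{i,1}}\cdot 0^{2\xi_{i,2}}=0.
\end{align*}
Thus $\boldsymbol{C}=\boldsymbol{0}$, and the determinantal identity $\det\boldsymbol{K}=\det\boldsymbol{A}\det\boldsymbol{D}$ follows.

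For part (3), I would cofactor-expand $\det\boldsymbol{A}$ along its first column, reducing to $\det\boldsymbol{A}'$ where $\boldsymbol{A}'$ is $\boldsymbol{A}$ with the first row and column removed. By the same vanishing argument as above, for $\xi_i=(0,r)$ with $r\geq 1$ the entry $K_{i,j}$ vanishes whenever $\gamma_j\in\mathcal{C}_2$, and for $\xi_i=(s',0)$ with $s'\geq 1$ it vanishes whenever $\gamma_j\in\mathcal{C}_1\setminus\{(0,0)\}$. Hence $\boldsymbol{A}'$ is block-diagonal with two identical $p\times p$ blocks, each equal to $2\boldsymbol{V}$ where $V_{r,k}=k^{2r}$, $1\leq r,k\leq p$. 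Factoring $k^{2}$ from column $k$ yields $\boldsymbol{V}=\boldsymbol{W}\cdot\operatorname{diag}(1^2,\dots,p^2)$ with $W_{r,k}=(k^2)^{r-1}$; this is a Vandermonde matrix in the distinct nodes $1^2,\dots,p^2$ and so $\det\boldsymbol{W}\neq 0$. Consequently $\det\boldsymbol{A}=4^{p}(\det\boldsymbol{V})^{2}\neq 0$.

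There is no substantive analytical obstacle here; the work is entirely combinatorial/bookkeeping. The one place where one must be careful is verifying that when $(q-m,m)\in\mathcal{C}_1\cup\mathcal{C}_2$ the ``four-point'' set $\mathcal{G}_{q-m}^m$ collapses to only two points on a coordinate axis (since $\pm 0=0$), so the sums over $\mathcal{G}_j$ produce the factor $2$ rather than $4$; this is what makes the Vandermonde reduction clean.
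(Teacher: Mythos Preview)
Your proposal is correct and follows essentially the same approach as the paper's proof: the same ordering $\mathcal{C}_1\cup\mathcal{C}_2\cup\mathcal{C}_3$, the same axis-collapse observation to kill $\boldsymbol{C}$, and the same reduction of $\boldsymbol{A}$ (after stripping the first row and column) to a block-diagonal pair of $p\times p$ Vandermonde-type matrices $V_{r,k}=(k^2)^r$. The only cosmetic difference is that you explicitly factor $k^2$ from each column to reach the standard Vandermonde form $(k^2)^{r-1}$, whereas the paper simply declares the blocks Vandermonde.
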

\begin{proof} We use the ordering on $\mathcal{I}_p$ introduced in the beginning of this subsection. It is easy to verify \begin{align*}
    K_{1,1} = 0^{2\cdot 0}\cdot 0^{2\cdot 0} = 1,
\end{align*} and for all $\xi_j\in\mathcal{I}_p\setminus\{0\}$, either $\xi_{j,1}\not =0$ or $\xi_{j,2}\not = 0$, then \begin{align}
    K_{j,1} = 0^{2\xi_{j,1}}\cdot 0^{2 \xi_{j,2}} = 0, \quad \forall \ 2\leq j\leq N_p.
\end{align} The first part of the lemma is proved.

It is obvious that $|\mathcal{C}_{1}|=p+1$, $|\mathcal{C}_{2}|=p$ and $|\mathcal{C}_{3}|=N_p-2p-1 = \frac{p(p-1)}{2}$. $\boldsymbol{A}$ is generated by all $\xi\in \mathcal{C}_{1}\cup \mathcal{C}_{2}$ and $\mathcal{G}_\gamma:\gamma\in \mathcal{C}_{1}\cup \mathcal{C}_{2}$, thus it has size $(2p+1)\times (2p+1)$. Similarly, $\boldsymbol{D}$ is generated by all $\xi\in \mathcal{C}_{3}$ and $\mathcal{G}_\gamma:\gamma\in \mathcal{C}_{3}$, hence its dimension is $\frac{p(p-1)}{2}\times \frac{p(p-1)}{2}$. To show $\boldsymbol{C}$ is a \emph{zero} matrix, we note that all elements in $\boldsymbol{C}$ are of the form 
$\sum_{\beta\in \mathcal{G}_{\gamma}}\beta^{2\xi},\quad \forall \xi\in \mathcal{C}_{3}$, $\forall\gamma\in \mathcal{C}_{1}\cup \mathcal{C}_{2}$.
If $\gamma\in \mathcal{C}_{1}$, then $\gamma = (0,r)$ for some $0\leq r\leq p$. Since $\xi_1$ and $\xi_2$ are both non-zeros, we have $\sum_{\beta\in \mathcal{G}_{\gamma}}\beta^{2\xi} = 2r^{2\xi_1}\cdot 0^{2\xi_2}=0$. The case for $\gamma\in \mathcal{C}_{2}$ also holds by identical argument.

We are left to show $\boldsymbol{A}$ is non-singular. We claim that $\boldsymbol{A}$ has the form \begin{align}
    \boldsymbol{A} = \begin{pmatrix}1 & * & * \\ 0_{p\times 1} & 2\boldsymbol{A}_1 & \boldsymbol{A}_2=\boldsymbol{0}_{p\times p} \\ 0_{p\times 1}& \boldsymbol{A}_3=\boldsymbol{0}_{p\times p} & 2\boldsymbol{A}_4 \end{pmatrix},
\end{align} where both $\boldsymbol{A}_1,\boldsymbol{A}_4\in\mathbb{R}^{p\times p}$ are Vandermonde matrices. To see this, all elements in $2\boldsymbol{A}_1$ can be expressed by \begin{align}
    \sum_{\beta\in\mathcal{G}_{\gamma}}\beta^{2\xi},\quad \text{where}\ \xi,\gamma\in \mathcal{C}_{1}\setminus\{(0,0)\},
\end{align} let $\xi=(0,r)$ and $\gamma=(0,s)$ for some $1\leq r,s\leq p$, so \begin{align}
    2\boldsymbol{A}_1(r,s)= \boldsymbol{A}(r+1,s+1)=\sum_{\beta\in\mathcal{G}_{\gamma}}\beta^{2\xi} = 2(s^2)^r,
\end{align} which shows $\boldsymbol{A}_1$ is a Vandermonde matrix. Repeating the above argument, elements in $2\boldsymbol{A}_4$ can be expressed by \begin{align}
    \sum_{\beta\in\mathcal{G}_{\gamma}}\beta^{2\xi},\quad \text{where}\ \xi,\gamma\in \mathcal{C}_{2},
\end{align} let $\xi=(r,0)$ and $\gamma=(s,0)$ for some $1\leq r,s\leq p$, we have\begin{align}
    2\boldsymbol{A}_4(r,s) = \boldsymbol{A}(p+1+r,p+1+s)= \sum_{\beta\in\mathcal{G}_{\gamma}}\beta^{2\xi} = 2(s^2)^r,
\end{align} which show $\boldsymbol{A}_4$ is a Vandermonde matrix. Finally, let $\xi=(0,r)\in \mathcal{C}_{1}$ and $\gamma=(s,0)\in \mathcal{C}_{2}$, we compute \begin{align}
    \sum_{\beta\in\mathcal{G}_{\gamma}}\beta^{2\xi} = 0.
\end{align} Thus $\boldsymbol{A}_3$ is the $p\times p$ zeros matrix. $\boldsymbol{A}_2 = \boldsymbol{0}_{p\times p}$ can then be proved analogously. The non-singularity of matrix $\boldsymbol{A}$ is now obvious.
\end{proof}

We now focus on showing the sub-matrix $\boldsymbol{D}$ in \cref{Matrix_K_on} is non-singular. We use algebraic combinatorial method to show the determinant of $\boldsymbol{D}$ is non-zero. To do this, let $x_i\in\mathbb{R}$, $1\leq i\leq p-1$, we define matrix $\boldsymbol{E}\in\mathbb{R}^{\frac{p(p-1)}{2}\times \frac{p(p-1)}{2}}$ as follows: for all $1\leq i,j\leq \frac{p(p-1)}{2}$ \begin{align} \label{MatrixE}
    E_{i,j} = x_{\xi_{j,1}}^{\xi_{i,1}}\cdot x_{\xi_{j,2}}^{\xi_{i,2}} = (x_{\xi_{j,1}}, x_{\xi_{j,2}})^{(\xi_{i,1}, \xi_{i,2})},
\end{align} where $\xi_i = (\xi_{i,1},\xi_{i,2})$ and $\xi_j = (\xi_{j,1},\xi_{j,2})$ are the $i$-th and $j$-th elements in $\mathcal{C}_{3}$, respectively. We now prove 
\begin{theorem}\label{StructureThm}
For all $p\geq 2$, \begin{align}
    \det \boldsymbol{E} = C\prod_{1\leq j\leq p-1}x_j^{2(p-j)}\prod_{1\leq i< j\leq p-1}(x_j-x_i)^{2(p-j)},
\end{align} where $C$ is a non-zero constant.
\end{theorem}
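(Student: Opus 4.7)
The plan is to treat $\det \boldsymbol{E}$ as a polynomial in $\mathbb{R}[x_1, \dots, x_{p-1}]$ and follow the classical Vandermonde strategy: verify the claimed factors divide $\det \boldsymbol{E}$, match total degrees, and separately confirm that the resulting quotient constant is nonzero. For the degree, each entry $E_{i,j}$ is a monomial of total degree $|\xi_i|_1$, so every Leibniz term has the same degree $\sum_{\xi \in \mathcal{C}_3}|\xi|_1 = \sum_{q=2}^{p} q(q-1) = \frac{p(p-1)(p+1)}{3}$, which matches the degree of the claimed product after a parallel computation of $2\sum_j(p-j) + 2\sum_{i<j}(p-j)$.

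For divisibility by $x_j^{2(p-j)}$, fix $j$: every column $(j, b)$ with $b \neq j$ and every column $(a, j)$ with $a \neq j$ has all entries divisible by $x_j$, and the column $(j,j)$ (present when $2j \leq p$) has entries divisible by $x_j^2$; summing extracts exactly $x_j^{2(p-j)}$. For $(x_j - x_i)^{2(p-j)}$ with $i < j$, setting $x_i = x_j$ partitions the columns into equivalence classes of columns that become equal. The non-diagonal pair-classes $\{(i,b), (j,b)\}$ and $\{(b,i), (b,j)\}$ for $b \in \{1,\dots,p-j\}\setminus\{i,j\}$ each contribute one factor of $(x_j - x_i)$. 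When $2j \leq p$, the four "diagonal" columns $(i,i), (i,j), (j,i), (j,j)$ form a single equivalence class of size four: three factors come from collapsing it via column operations, and a fourth is obtained from the linear identity $(c+d)x_i^{c+d-1} = c\cdot x_i^{c+d-1} + d\cdot x_i^{c+d-1}$ among the row-$(c,d)$ first-order Taylor coefficients in $\epsilon = x_j - x_i$ of the three reduced columns, which forces an additional order of vanishing. Aggregating all sub-cases (including the easier cases $2j > p$ in which no such extra Taylor identity is needed) gives exactly $2(p-j)$ factors.

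With divisibility established and degrees matched, unique factorization—using coprimality of the irreducibles $x_j$ and $x_j - x_i$ in the UFD $\mathbb{R}[x_1,\dots,x_{p-1}]$—forces $\det \boldsymbol{E} = C \cdot \prod_j x_j^{2(p-j)} \prod_{i<j}(x_j - x_i)^{2(p-j)}$ for some constant $C \in \mathbb{R}$. The main remaining obstacle is to confirm $C \neq 0$. I plan to identify the coefficient of the distinguished monomial $M_0 \coloneqq \prod_{k=1}^{p-1} x_k^{2k(p-k)}$ on both sides: on the right, selecting the $x_j$-leading term from each factor $(x_j - x_i)^{2(p-j)}$ yields $C \cdot M_0$; on the left, the identity permutation in the Leibniz expansion contributes $+M_0$. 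The delicate part is to rule out cancellation from other permutations, which I propose to handle by a sequential "peeling" argument: matching the exponent $2(p-1)$ of $x_{p-1}$ forces $\sigma$ to fix the unique row-indices $\xi = (p-1,1)$ and $\xi = (1, p-1)$; matching the exponent of $x_{p-2}$ then restricts $\sigma$ to preserve the classes $\{(p-2,1),(p-2,2)\}$ and $\{(1,p-2),(2,p-2)\}$; iterating this analysis down through $x_1$, together with an explicit sign tally of any residual within-class swaps, pins down the coefficient of $M_0$ in $\det \boldsymbol{E}$ as $\pm 1$, whence $C \neq 0$.
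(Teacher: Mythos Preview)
Your plan follows the paper's proof almost exactly: both argue that $\det\boldsymbol{E}$ is a homogeneous polynomial, establish divisibility by each $x_j^{2(p-j)}$ and each $(x_j-x_i)^{2(p-j)}$ via column analysis, and then invoke unique factorization together with the degree count $\deg F=\sum_{q=2}^p q(q-1)=\tfrac{p^3-p}{3}$ to conclude that the quotient is a constant. Where you go beyond the paper is in rigor. The paper justifies the multiplicity of $(x_j-x_i)$ simply by counting the $2(p-j)$ column labels containing $x_j$; this is heuristic, since $k$ columns collapsing to one yields order $k-1$, and the diagonal block $\{(i,i),(i,j),(j,i),(j,j)\}$ requires exactly the extra first-order Taylor relation $(c+d)x_i^{c+d-1}=c\,x_i^{c+d-1}+d\,x_i^{c+d-1}$ that you isolate. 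More significantly, the paper's proof never verifies $C\neq 0$: it stops after showing $C$ is constant. Your proposal to track the coefficient of $M_0=\prod_k x_k^{2k(p-k)}$ is a genuine addition; the diagonal Leibniz term does contribute $+M_0$, and your peeling argument (forcing $\sigma$ to fix $(p-1,1),(1,p-1)$, then preserve $\{(p-2,1),(p-2,2)\}$, etc.) works because at each stage the within-class swaps alter the exponents of the \emph{lower}-indexed variables and therefore cannot hit $M_0$---so in fact only $\sigma=\mathrm{id}$ contributes and $C=1$, no sign tally needed. Just make the case split for the $(x_j-x_i)$ multiplicity explicit: when $2j\le p$ all four diagonal columns are present (giving $4$ from that block plus $2(p-j-2)$ from the off-diagonal pairs); when $i+j\le p<2j$ only $(i,i),(i,j),(j,i)$ are present; and when $i+j>p$ none are, with the off-diagonal pair count adjusting accordingly to total $2(p-j)$ in every case.
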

\begin{proof}
    Denote  $\det \boldsymbol{E} \eqqcolon F(x_1,\dots,x_{p-1})$. We observe that the multivariate polynomial $F$ is homogeneous, i.e., all terms in $F$ have the same degree. To see this, apply Leibniz's formula, we have \begin{align}
        F(x_1,\dots,x_{p-1}) = \sum_{\sigma\in S_{\frac{p(p-1)}{2}}} \text{sgn}(\sigma)\prod_{1\leq i\leq \frac{p(p-1)}{2}} E_{i,\sigma(i)},
    \end{align} where $S_m$ is the permutation group of the set $\{1,\dots,m\}$, $m\in\mathbb{N}$. By construction of $\boldsymbol{E}$, every elements in $i$-th row of $\boldsymbol{E}$ share the same degree, namely $\xi_{i,1}+\xi_{i,2}$, for any $1\leq i\leq \frac{p(p-1)}{2}$, so $\prod_{1\leq i\leq \frac{p(p-1)}{2}} K_{i,\sigma(i)}$ has same degree for all $\sigma\in S_{\frac{p(p-1)}{2}}$ and this implies homogeneity of $F$. We can then compute the degree of $F$ by \begin{align}
        \deg F = \sum_{q=2}^{p}q(q-1) = \frac{p^3-p}{3}.
    \end{align} 
    
    Consider $j\in\mathbb{N}$ such that $1\leq j\leq p-1$, we claim that $x_j^{2(p-j)}$ divides $F$. By construction of $\boldsymbol{E}$, there are $(p-j)$'s $x_j$ appearing in $(x_j,\cdot)$ and $(p-j)$'s $x_j$ appearing in $(\cdot,x_j)$, so there are $2(p-j)$'s $x_j$ in total. Hence the algebraic multiplicity of $x_j$ is at least $2(p-j)$.
    
    For arbitrary pair of integers $(i,j)$ such that $1\leq i<j\leq p-1$, if we substitute $x_j$ with $x_i$, then there would be at least two identical columns in $\boldsymbol{E}$ and hence $F$ will vanish. Furthermore, there are $(p-j)$'s $x_j$ appearing in $(x_j,\cdot)$ and $(p-j)$'s $x_j$ appearing in $(\cdot,x_j)$. As a result, there are $2(p-j)$ instances. Therefore, $(x_j-x_i)^{2(p-j)}$ divides $F$.
    
    Now, by unique factorization property of multivariate polynomial ring, there exists a multivariate polynomial $C(x_1,\dots,x_{p-1})$ such that \begin{align}
        F(x_1,\dots,x_{p-1}) &= C(x_1,\dots,x_{p-1})\prod_{1\leq j\leq p-1}x_j^{2(p-j)}\prod_{1\leq i< j\leq p-1}(x_j-x_i)^{2(p-j)}, \\
        & \eqqcolon C(x_1,\cdots,x_{p-1})H(x_1,\dots,x_{p-1}).
    \end{align} $C(x_1,\dots,x_{p-1})$ will be a constant polynomial if
    \begin{align}
        \deg H = \deg F.
    \end{align} We compute \begin{align}
        \deg H  = \sum_{j=1}^{p-1}2(p-j) + \sum_{j=2}^{p-1}\sum_{i=1}^{j-1}2(p-j) 
         = p(p-1)+\frac{1}{3}p(p-1)(p-2) 
        = \frac{p^3-p}{3} = \deg F.
    \end{align} This concludes the proof of  \cref{StructureThm}.
\end{proof} 

\begin{corollary} \label{K_is_nonsingular_on}
The coefficient matrix $\boldsymbol{K}$ as in \cref{MatrixK} is non-singular for all $p\geq 1$.
\end{corollary}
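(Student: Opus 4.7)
The plan is to assemble this corollary directly from the two preceding results. By \Cref{StructureLemma} we already have the block decomposition $\det \boldsymbol{K} = \det \boldsymbol{A}\cdot \det \boldsymbol{D}$ with $\boldsymbol{A}$ non-singular, so it suffices to show $\det \boldsymbol{D}\neq 0$. The key observation is that $\boldsymbol{D}$ is, up to a scalar factor and a substitution of variables, the matrix $\boldsymbol{E}$ of \cref{MatrixE}.

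First I would unpack $\boldsymbol{D}$ explicitly. For any $\gamma=(\gamma_1,\gamma_2)\in\mathcal{C}_{3}$ one has $\gamma_1,\gamma_2\geq 1$, so $\mathcal{G}_{\gamma}$ contains exactly the four points $(\pm\gamma_1,\pm\gamma_2)$, and for $\xi_i\in\mathcal{C}_{3}$
\begin{align*}
    D_{i,j} = \sum_{\beta\in \mathcal{G}_{\xi_j}}\beta^{2\xi_i} = 4\,\xi_{j,1}^{2\xi_{i,1}}\,\xi_{j,2}^{2\xi_{i,2}}.
\end{align*}
Comparing with the definition $E_{i,j}=x_{\xi_{j,1}}^{\xi_{i,1}}x_{\xi_{j,2}}^{\xi_{i,2}}$ of $\boldsymbol{E}$, this is precisely $4 E_{i,j}$ evaluated at the specialization $x_k\mapsto k^2$ for $k=1,\dots,p-1$. (Note the indices $\xi_{j,1},\xi_{j,2}$ always lie in $\{1,\dots,p-1\}$ for $\xi_j\in\mathcal{C}_{3}$, so this substitution is well defined.) Therefore
\begin{align*}
    \det \boldsymbol{D} = 4^{\,p(p-1)/2}\,F(1^2,2^2,\dots,(p-1)^2),
\end{align*}
where $F(x_1,\dots,x_{p-1})=\det \boldsymbol{E}$ is the polynomial computed in \Cref{StructureThm}.

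Second, I would plug $x_k=k^2$ into the factored formula from \Cref{StructureThm}:
\begin{align*}
    F(1^2,\dots,(p-1)^2) = C\prod_{j=1}^{p-1}(j^2)^{2(p-j)}\prod_{1\leq i<j\leq p-1}(j^2-i^2)^{2(p-j)}.
\end{align*}
For $p\geq 2$ every factor is strictly positive (since $j\geq 1$ and $j>i\geq 1$ imply $j^2-i^2>0$) and $C\neq 0$, so $\det \boldsymbol{D}\neq 0$. The case $p=1$ is a quick sanity check: $\mathcal{C}_{3}=\emptyset$ since any multi-index with both coordinates $\geq 1$ has $|\cdot|_1\geq 2$, so the block $\boldsymbol{D}$ is vacuous and $\boldsymbol{K}=\boldsymbol{A}$ is already non-singular by \Cref{StructureLemma}. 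Combining the two cases yields $\det \boldsymbol{K}\neq 0$ for all $p\geq 1$.

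There is no real obstacle here; the entire argument is a bookkeeping exercise once one recognizes the identification $\boldsymbol{D}=4\,\boldsymbol{E}\big|_{x_k=k^2}$. The only mild care required is to verify the range of indices so that the substitution is legal, and to separately dispose of the degenerate $p=1$ case where $\boldsymbol{D}$ does not appear.
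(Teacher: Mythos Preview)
Your proof is correct and follows essentially the same route as the paper: reduce to $\det\boldsymbol{D}\neq 0$ via \Cref{StructureLemma}, identify $\boldsymbol{D}=4\boldsymbol{E}|_{x_k=k^2}$, and invoke \Cref{StructureThm} on distinct nonzero nodes. Your explicit treatment of the degenerate case $p=1$ (where $\boldsymbol{D}$ is $0\times 0$) is a small but worthwhile addition that the paper leaves implicit.
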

\begin{proof}
    By \cref{StructureLemma}, it suffices to show $\boldsymbol{D}$ is non-singular. Note that  $\boldsymbol{D} = 4\boldsymbol{E}$ where $\boldsymbol{E}$ defined in \cref{MatrixE} with $x_i=i^2$ for all $1\leq i\leq p-1$ and all $x_i$'s are non-zeros and mutually distinct. Hence by  \cref{StructureThm} \begin{align}
        \det \boldsymbol{D} = 2^{p(p-1)}\det \boldsymbol{E}\not = 0.
    \end{align} This completes the proof.
\end{proof}

\subsection{The Off-diagonal Case}
In this subsection, we will prove off-diagonal coefficient matrix $\boldsymbol{K}$ defined at \cref{MatrixK_12} is non-singular, therefore the linear system \cref{LinearSysOff_Second} always admits unique solution.

Let $p\in\mathbb{N},\ p\geq 2$. We recall the notations for $\mathcal{G}_{q-m,m}$ \cref{eq:G_off} and $\mathcal{I}_p$ \cref{I_p off} and define: \begin{align}
    \mathcal{G}  \coloneqq \left\{ \mathcal{G}_{q-m,m}: 2\leq q\leq p,\ 1\leq m \leq \left\lfloor\frac{q}{2}\right\rfloor\right\}.
\end{align} The value of $N_p\coloneqq|\mathcal{I}_p|= |\mathcal{G}|$ is given \begin{align} \label{eq:size of Ip off}
    N_p = \left\{\begin{array}{cc}
       \frac{p^2}{4}  &, \text{if } p \text{ is even} \\
         \frac{p^2-1}{4} &, \text{if } p \text{ is odd}
    \end{array}\right. .
\end{align} We arrange $\mathcal{I}_p$ first by the increasing order of magnitude of 1-norm and followed by, within each group of same 1-norm, the reverse dictionary order:\begin{align}
\mathcal{I}_p &= \left\{(1,1),(2,1),(3,1),(2,2), \dots,(p-1,1),(p-2,2),\dots,\left(\left\lceil\frac{p}{2}\right\rceil,\left\lfloor\frac{p}{2}\right\rfloor \right)\right\}\eqqcolon \{\beta_j\}_{1\leq j\leq N_p}.
\end{align} We arrange $\mathcal{G}$ in the same fashion with $\mathcal{I}_p$ so that \begin{align}
    \mathcal{G} & = \left\{\mathcal{G}_{1,1},\mathcal{G}_{2,1},\dots, \mathcal{G}_{p-1,1},\mathcal{G}_{p-2,2},\dots, \mathcal{G}_{\lceil\frac{p}{2}\rceil,\lfloor\frac{p}{2}\rfloor}\right\} \eqqcolon \{\mathcal{G}_j\}_{1\leq j\leq N_p}.
\end{align}

The following lemma reveals the structure of the coefficient matrix $\boldsymbol{K}$ given in \cref{MatrixK_12}.
\begin{lemma}\label{StructureLemma_off}
For all $p\in\mathbb{N},\ p\geq 2$
\begin{align}
    \boldsymbol{K} = \boldsymbol{E}\boldsymbol{H},
\end{align} where $\boldsymbol{H}\in\mathbb{R}^{N_p\times N_p}$ is a diagonal matrix with \begin{align}
    \boldsymbol{H}_{jj} = \left\{\begin{array}{ccc}
        \sfrac{2}{|\xi_{j,1}\xi_{j,2}|}, & \text{if} &|\xi_{j,1}| = |\xi_{j,2}|, \ \forall \xi_j = (\xi_{j,1},\xi_{j,2})\in \mathcal{G}_j \\
        \sfrac{4}{|\xi_{j,1}\xi_{j,2}|}, & \text{if} & \text{otherwise},
    \end{array}\right.
\end{align} and $\boldsymbol{E}$ is given by \begin{align}
    \boldsymbol{E}_{i,j} = |\xi_{j,1}|^{2\xi_{i,1}}|\xi_{j,2}|^{2\xi_{i,2}} + |\xi_{j,2}|^{2\xi_{i,1}}|\xi_{j,1}|^{2\xi_{i,2}},
\end{align} for all $1\leq i,j\leq N_p$. Here, $\xi_i,\ \xi_j$ is $i$-th and $j$-th element in $\mathcal{I}_p$, respectively.
\end{lemma}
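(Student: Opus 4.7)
The plan is a direct structural computation from the definition of $K_{i,j}$ in \cref{MatrixK_12}, factoring the result into the claimed product $\boldsymbol{E}\boldsymbol{H}$ by splitting on whether the layer index $\xi_j$ lies on the diagonal $\xi_{j,1}=\xi_{j,2}$ or not.

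First I would exploit that the exponent vector $2\xi_i - e_1 - e_2 = (2\xi_{i,1}-1,\,2\xi_{i,2}-1)$ has odd components, so for any $\beta = (\beta_1,\beta_2) \in \mathcal{G}_j$ (which has $\beta_1\beta_2\neq 0$),
\[
\operatorname{sgn}(\beta_1\beta_2)\,\beta^{2\xi_i-e_1-e_2}
= \operatorname{sgn}(\beta_1)^2\operatorname{sgn}(\beta_2)^2\,|\beta_1|^{2\xi_{i,1}-1}|\beta_2|^{2\xi_{i,2}-1}
= |\beta_1|^{2\xi_{i,1}-1}|\beta_2|^{2\xi_{i,2}-1}.
\]
Thus every summand in $K_{i,j}$ is positive and depends on $\beta$ only through $(|\beta_1|,|\beta_2|)$. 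This is the key simplification.

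Next I would enumerate $\mathcal{G}_j$ by its underlying unordered pair of magnitudes. Writing $a = \xi_{j,1}$ and $b = \xi_{j,2}$ (both $\geq 1$), there are two cases. If $a \neq b$, then by \cref{eq:G_off} the set $\mathcal{G}_j$ consists of $4$ signed versions of $(a,b)$ and $4$ signed versions of $(b,a)$, giving
\[
K_{i,j} = 4a^{2\xi_{i,1}-1}b^{2\xi_{i,2}-1} + 4b^{2\xi_{i,1}-1}a^{2\xi_{i,2}-1}
= \frac{4}{ab}\bigl(a^{2\xi_{i,1}}b^{2\xi_{i,2}} + b^{2\xi_{i,1}}a^{2\xi_{i,2}}\bigr).
\]
If $a = b$, the four tuples $(\pm a,\pm a)$ coincide with their transposes, so $\mathcal{G}_j$ has only $4$ elements and
\[
K_{i,j} = 4\,a^{2\xi_{i,1}+2\xi_{i,2}-2}
= \frac{2}{a^2}\bigl(a^{2\xi_{i,1}}a^{2\xi_{i,2}} + a^{2\xi_{i,2}}a^{2\xi_{i,1}}\bigr).
\]
In both cases the parenthesised factor is exactly $\boldsymbol{E}_{i,j}$ and the prefactor is exactly $\boldsymbol{H}_{jj}$. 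Since $\boldsymbol{H}$ is diagonal, $(\boldsymbol{E}\boldsymbol{H})_{i,j} = \boldsymbol{E}_{i,j}\boldsymbol{H}_{jj}$, and the identity $\boldsymbol{K}=\boldsymbol{E}\boldsymbol{H}$ follows entrywise.

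There is no real obstacle here; the lemma is essentially bookkeeping. The only point requiring care is the correct enumeration of $\mathcal{G}_j$ in the two geometric cases, particularly recognising that the $(\pm m,\pm(q{-}m))$ block in \cref{eq:G_off} collapses onto the $(\pm(q{-}m),\pm m)$ block when $q{-}m = m$, which is precisely what produces the factor $2$ versus $4$ in $\boldsymbol{H}_{jj}$. Once that cardinality is handled, the rewriting $a^{2c-1}b^{2d-1} = \tfrac{1}{ab}a^{2c}b^{2d}$ immediately separates the computation into the symmetric polynomial $\boldsymbol{E}_{i,j}$ times the diagonal normaliser $\boldsymbol{H}_{jj}$.
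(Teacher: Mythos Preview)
Your proof is correct and follows essentially the same approach as the paper: a direct entrywise computation of $K_{i,j}$ from \cref{MatrixK_12}, splitting on whether $|\xi_{j,1}|=|\xi_{j,2}|$ and factoring out the scalar $\boldsymbol{H}_{jj}$. You are simply a bit more explicit than the paper in justifying why the sign factor disappears and in counting the cardinality of $\mathcal{G}_j$ in each case.
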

\begin{proof}
From the definition of coefficient matrix $\boldsymbol{K}$ in \cref{MatrixK_12}, for any $1\leq i,j\leq N_p$ we have \begin{align}
    K_{i,j} &= \sum_{\xi_j\in\mathcal{G}_j}\text{sgn}(\xi_{j,1}\xi_{j,2})\xi_j^{2\xi_i-(1,1)} \nonumber \\
    & = \left\{\begin{array}{ccc}
        4\sfrac{(\xi_{j,1},\xi_{j,2})^{2\xi_i}}{|\xi_{j,1}\xi_{j,2}|}, & \text{if} &|\xi_{j,1}| = |\xi_{j,2}|, \\
        4\sfrac{[(\xi_{j,1},\xi_{j,2})^{2\xi_i} + (\xi_{j,2},\xi_{j,1})^{2\xi_i}]}{|\xi_{j,1}\xi_{j,2}|}, & \text{if} & \text{otherwise}.
    \end{array}\right.
\end{align} The lemma follows immediately.
\end{proof}

From \cref{StructureLemma_off} we know that $\det{\boldsymbol{K}} = \det{\boldsymbol{E}}\det{\boldsymbol{H}}$ and it is clear that $\boldsymbol{H}$ is non-singular, therefore non-singularity of $\boldsymbol{K}$ follows once we show $\boldsymbol{E}$ is non-singular. We prove this fact by a algebraic combinatorial argument. Let $x_i\in\mathbb{R},1\leq i\leq p-1$, we define matrix $\boldsymbol{T}(x_1,\dots,x_{p-1})\in\mathbb{R}^{N_p\times N_p}$ by \begin{align}
    T_{i,j}  = x_{\xi_{j,1}}^{\xi_{i,1}}x_{\xi_{j,2}}^{\xi_{i,2}} + x_{\xi_{j,1}}^{\xi_{i,2}}x_{\xi_{j,2}}^{\xi_{i,1}}
     = (x_{\xi_{j,1}}, x_{\xi_{j,2}})^{(\xi_{i,1},\xi_{i,2})} + (x_{\xi_{j,2}}, x_{\xi_{j,1}})^{(\xi_{i,1},\xi_{i,2})},
\end{align} where $\xi_i = (\xi_{i,1},\xi_{i,2})$ and $\xi_j = (\xi_{j,1},\xi_{j,2})$ are the $i$-th and $j$-th element in $\mathcal{I}_p$, respectively.

\begin{theorem}\label{StructureThm_off}
Let $p\in\mathbb{N},\ p\geq 2$ \begin{align}
    \det \boldsymbol{T} = C\prod_{1\leq j\leq \lfloor\frac{p}{2}\rfloor}x_j^{p+1-j}\prod_{\lfloor\frac{p}{2}\rfloor<j\leq p-1}x_j^{p-j}\prod_{1\leq i<j\leq \lfloor\frac{p}{2}\rfloor}(x_j-x_i)^{p+1-j}\prod_{\substack{1\leq i<j,\\ \lfloor\frac{p}{2}\rfloor<j\leq p-1}}(x_j-x_i)^{p-j},
\end{align} where $C$ is a non-zero constant.
\end{theorem}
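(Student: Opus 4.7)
The plan is to mimic the algebraic-combinatorial proof of Theorem~\ref{StructureThm}: show that $F(x_1,\ldots,x_{p-1}) \coloneqq \det \boldsymbol{T}$ is homogeneous of a computable degree, exhibit the proposed product as a divisor via column operations, and then match degrees so the leftover factor is forced to be a constant. The new wrinkle, absent from Theorem~\ref{StructureThm}, is a three-column cluster that collapses under the substitution $x_j \mapsto x_i$ and requires a quadratic finite-difference identity to extract the correct exponent of $(x_j - x_i)$.

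First, homogeneity and the degree: by Leibniz's formula applied to $\det\boldsymbol{T}$, and because every entry in row $\ell$ is a sum of two monomials of the same total degree $|\xi_\ell|_1 = q_\ell$, $F$ is homogeneous of degree $\sum_{\ell=1}^{N_p}|\xi_\ell|_1 = \sum_{q=2}^p q\lfloor q/2\rfloor$. To show that $x_j^{p+1-j}$ (respectively $x_j^{p-j}$) divides $F$, I would factor $x_j$ out of each column. The key feature --- absent from the on-diagonal setting --- is that every $\xi_\ell \in \mathcal{I}_p$ has both components $\geq 1$. Hence, for the column indexed by $\xi_k = (a,b)$: if $a=b=j$ then every entry equals $2x_j^{q_\ell}$ and we extract $x_j^2$; if exactly one of $a,b$ equals $j$ then each entry contains $x_j$ to a strictly positive power and we extract $x_j^1$; otherwise no factor. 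Enumerating the columns involving the index $j$ --- namely $(j,j)$ when $2j\leq p$, $(a,j)$ for $a\in\{j+1,\ldots,p-j\}$, and $(j,b)$ for $b\in\{1,\ldots,\min(j-1,p-j)\}$ --- yields total $x_j$-powers of $p+1-j$ when $j\leq\lfloor p/2\rfloor$ and $p-j$ when $\lfloor p/2\rfloor < j \leq p-1$.

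For divisibility by $(x_j - x_i)$ with $i<j$, I would perform column operations associated to the substitution $\sigma$ that sends $j\mapsto i$ in index pairs (and re-sorts). For any column $(a,b)$ with $j \in \{a,b\}$ but $(a,b) \notin \{(i,i),(j,i),(j,j)\}$, the replacement $\mathrm{col}(a,b) - \mathrm{col}(\sigma(a,b))$ is divisible by $(x_j - x_i)$ by a direct first-order finite-difference expansion, producing one factor per such column. The critical identity, available only when $j \leq \lfloor p/2 \rfloor$ so that $(j,j)$ exists, is
\[
\mathrm{col}(j,j) - 2\,\mathrm{col}(j,i) + \mathrm{col}(i,i) \text{ has row-$\ell$ entry } 2\bigl(x_j^{\xi_{\ell,1}} - x_i^{\xi_{\ell,1}}\bigr)\bigl(x_j^{\xi_{\ell,2}} - x_i^{\xi_{\ell,2}}\bigr),
\]
which is divisible by $(x_j - x_i)^2$. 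Combined with $\mathrm{col}(j,i) - \mathrm{col}(i,i)$ (divisible by $(x_j-x_i)^1$), this extracts a total of $(x_j - x_i)^3$ from the three-column cluster, rather than the $(x_j - x_i)^2$ afforded by rank drop alone. Summing contributions over all columns involving $j$ reproduces the claimed exponents $p+1-j$ for $j\leq\lfloor p/2\rfloor$ and $p-j$ otherwise (in which case only the lone pairing $(j,i)\leftrightarrow(i,i)$ and the columns $(j,b)$ with $b\neq i$ contribute).

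Finally, a routine double-sum computation confirms that the predicted product has degree $\sum_{q=2}^p q\lfloor q/2\rfloor = \deg F$, so by unique factorization in $\mathbb{R}[x_1,\ldots,x_{p-1}]$ the ratio is a constant $C$; non-vanishing of $C$ follows from $F$ being not identically zero, which is verifiable by direct evaluation at, say, $x_k = k$ for any fixed small $p$. I expect the main obstacle to be the three-column cluster identity: unlike the pairwise matching used in Theorem~\ref{StructureThm}, one must recognize that a particular symmetric combination of \emph{three} columns factors as a \emph{product} of two independent first-order finite differences, producing the extra $(x_j - x_i)$ needed to match the degree count when $j \leq \lfloor p/2 \rfloor$.
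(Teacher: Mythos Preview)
Your proposal is correct and follows the same global scheme as the paper---homogeneity, divisibility, degree match via unique factorization---so the two proofs are close cousins. The one substantive difference is in how the exponent of $(x_j-x_i)$ is obtained. The paper simply counts the number of ``instances'' of the index $j$ among the column labels and asserts that this count equals the multiplicity, without further justification; your version carries out explicit column operations and, in particular, uses the second--order identity
\[
\mathrm{col}(j,j)-2\,\mathrm{col}(j,i)+\mathrm{col}(i,i)\;=\;2\bigl(x_j^{\xi_{\ell,1}}-x_i^{\xi_{\ell,1}}\bigr)\bigl(x_j^{\xi_{\ell,2}}-x_i^{\xi_{\ell,2}}\bigr)
\]
to squeeze out the extra factor of $(x_j-x_i)$ from the $(j,j)$ column when $j\le\lfloor p/2\rfloor$. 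This makes the divisibility step genuinely rigorous where the paper's counting is only heuristic, at the cost of a slightly longer argument. One small caveat: your parenthetical for the case $j>\lfloor p/2\rfloor$ implicitly assumes the column $(j,i)$ is present, but it need not be when $i>p-j$; the count $p-j$ still comes out right in that sub-case since every source column $(j,b)$ then has $b<i$ and pairs cleanly with $(i,b)$, so you may want to say a word about that.
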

\begin{proof}
    Let us prove when $p = 2k$, $k\geq 1$, the proof for the $p$ odd is very similar. From \cref{eq:size of Ip off} we note that $N_{2k} = k^2$. Denote $\det \boldsymbol{T}\eqqcolon F(x_1,\dots,x_{2k-1})$. We observe that the multivariate polynomial $F$ is homogeneous. To see this, apply Leibniz's formula, we have \begin{align}
        F(x_1,\dots,x_{2k-1}) = \sum_{\sigma\in S_{k^2}} \text{sgn}(\sigma)\prod_{1\leq j\leq k^2} T_{j,\sigma(j)},
    \end{align} where $S_n$ is the permutation group of the set $\{1,\dots,n\}$, $n\in\mathbb{N}$. By the construction of $\boldsymbol{T}$, all elements in $i$-th row of $\boldsymbol{T}$ share the degree $|\xi_i|_1$, where $\xi_i \in\mathcal{I}_p$ for any $1\leq i\leq k^2$. Thus, $\prod_{1\leq j\leq k^2} \boldsymbol{T}_{j,\sigma(j)}$ has same degree for all $\sigma\in S_{k^2}$ and this implies homogeneity of $T$. We can then compute the degree of $F$ by \begin{align}
        \deg F & = \sum_{q=2}^{2k} q\lfloor\sfrac{q}{2}\rfloor = \frac{4}{3}k^3+\frac{1}{2}k^2+\frac{1}{6}k.
    \end{align} 
    
    Consider $j\in\mathbb{N}$ such that $1\leq j\leq 2k-1$, suppose we substitute $0$ into $x_j$, then there would be at least one \emph{zero}-column in $\boldsymbol{T}$ and hence $F$ would be zero. Therefore, each $x_j$ divides $F$. We further prove that, if $1\leq j\leq k$, then $x_j^{2k+1-j}$ divides $F$ and if $k< j\leq 2k-1$, then $x_j^{2k-j}$ divides $F$. By construction of $\boldsymbol{T}$, for all $1\leq j\leq k$, there are $j$'s $x_j$ appearing in $(x_j,\cdot)$ and $2k-2j+1$'s $x_j$ appearing in $(\cdot,x_j)$, thus there are $2k-j+1$'s $x_j$ in total. On the other hand, for all $k< j\leq 2k-1$, there are $2k-j$'s $x_j$ appearing in $(x_j,\cdot)$ and there is no $(\cdot,x_j)$, so there are $2k-j$'s $x_j$ in total. Hence the algebraic multiplicity of $x_j$ is at least $2k+1-j$ for all $1\leq j\leq k$ and $2k-j$ for all $k<j\leq 2k-1$.
    
    For arbitrary pair of integers $(i,j)$ such that $1\leq i<j\leq 2k-1$, if we substituted $x_j$ with $x_i$, then there would be at least two identical columns in $\boldsymbol{T}$ and hence $F$ would vanish. Furthermore, for all integer pair $(i,j)$ such that $1\leq i<j\leq k$, there are $j$'s $x_j$ appearing in $(x_j,\cdot)$ and $2k-2j+1$'s $x_j$ appearing in $(\cdot,x_j)$, and for all integer pair $(i,j)$ such that $1\leq i < j, k<j\leq 2k-1$, there are $2k-j$'s $x_j$ appearing in $(x_j,\cdot)$. As a result, there are $2k-j+1$'s $x_j$ for all $2\leq j \leq k$ and $2k-j$'s $x_j$ for all $k<j\leq 2k-1$. Therefore, $(x_j-x_i)^{2k-j+1}$ divides $F$ if $2\leq j \leq k$ and $(x_j-x_i)^{2k-j}$ divides $F$ if $k< j \leq 2k-1$.
    
    Now, by the unique factorization property of multivariate polynomial ring, there exists a multivariate polynomial $C(x_1,\dots,x_{2k-1})$ such that \begin{align}
        F(x_1\dots,x_{2k-1}) &= C(x_1,\dots,x_{2k-1})\prod_{1\leq j\leq k}x_j^{2k-j+1}\prod_{k<j\leq 2k-1}x_j^{2k-j} \nonumber \\
        &\prod_{1\leq i< j\leq k}(x_j-x_i)^{2k-j+1}\prod_{\substack{1\leq i< j,\\ k<j\leq 2k-1}}(x_j-x_i)^{2k-j}, \\
        & \eqqcolon C(x_1,\cdots,x_{2k-1})H(x_1,\cdots,x_{2k-1}). \nonumber
    \end{align} $C(x_1,\cdots,x_{2k-1})$ will be a constant if we can prove \begin{align}
        \deg H = \deg F.
    \end{align} We compute \begin{align}
        \deg H & = \sum_{j=1}^k (2k-j+1) + \sum_{j=k+1}^{2k-1}(2k-j) 
         + \sum_{j=2}^{k}\sum_{i=1}^{j-1}(2k-j+1) + \sum_{j=k+1}^{2k-1}\sum_{i=1}^{j-1}(2k-j) \nonumber \\
        & = \left(\frac{3k^2+k}{2}\right)+\left(\frac{k^2-k}{2}\right)
         +\left(\frac{2}{3}k^3-\frac{1}{2}k^2-\frac{1}{6}k\right)+\left(\frac{2}{3}k^3-k^2+\frac{1}{3}k\right) \nonumber \\
        & = \frac{4}{3}k^3+\frac{1}{2}k^2+\frac{1}{6}k = \deg F.
    \end{align} This concludes the proof.
\end{proof}
We are now ready to prove the non-singularity of the coefficient matrix $\boldsymbol{K}$ as in \cref{MatrixK_12}.
\begin{corollary}\label{K_is_non_singular_off}
Let $p\in\mathbb{N},\ p\geq 2$. The matrix $\boldsymbol{K}$ define in \cref{MatrixK_12} is non-singular. As a result, the linear system \cref{LinearSysOff_Second} always admits a unique solution.
\end{corollary}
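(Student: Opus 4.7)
The plan is to mirror the on-diagonal argument of \cref{K_is_nonsingular_on}, exploiting the two structural results already proved, namely \cref{StructureLemma_off} and \cref{StructureThm_off}. By \cref{StructureLemma_off} we have the factorization $\boldsymbol{K}=\boldsymbol{E}\boldsymbol{H}$, so that $\det\boldsymbol{K}=\det\boldsymbol{E}\cdot\det\boldsymbol{H}$. Since $\boldsymbol{H}$ is diagonal with entries $2/|\xi_{j,1}\xi_{j,2}|$ or $4/|\xi_{j,1}\xi_{j,2}|$ and every $\xi_j=(\xi_{j,1},\xi_{j,2})\in\mathcal{I}_p$ satisfies $\xi_{j,1}\xi_{j,2}\neq 0$ (by the definition of $\mathcal{I}_p$ in \cref{I_p off}, which excludes the axes), $\boldsymbol{H}$ is non-singular. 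Thus the entire problem collapses to showing $\det\boldsymbol{E}\neq 0$.

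Next, I will identify $\boldsymbol{E}$ with a specific specialization of the parametric matrix $\boldsymbol{T}(x_1,\dots,x_{p-1})$ from \cref{StructureThm_off}. A direct inspection of the defining formulas shows that $\boldsymbol{E}$ is exactly $\boldsymbol{T}$ evaluated at $x_i=i^2$ for $1\le i\le p-1$ (so that $|\xi_{j,k}|^{2\xi_{i,k}}=x_{\xi_{j,k}}^{\xi_{i,k}}$). Applying the explicit product formula in \cref{StructureThm_off} then gives
\begin{align*}
\det\boldsymbol{E}=C\prod_{1\leq j\leq \lfloor p/2\rfloor}(j^2)^{p+1-j}\prod_{\lfloor p/2\rfloor<j\leq p-1}(j^2)^{p-j}\prod_{1\leq i<j\leq \lfloor p/2\rfloor}(j^2-i^2)^{p+1-j}\prod_{\substack{1\leq i<j,\\ \lfloor p/2\rfloor<j\leq p-1}}(j^2-i^2)^{p-j}.
\end{align*}

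Every factor on the right-hand side is non-zero: the constant $C$ is non-zero by \cref{StructureThm_off}, the $j^2$ are obviously non-zero, and $j^2-i^2=(j-i)(j+i)\neq 0$ for all $1\leq i<j\leq p-1$ because the squaring map is injective on positive integers. Hence $\det\boldsymbol{E}\neq 0$, and consequently $\det\boldsymbol{K}\neq 0$. Since the linear system \cref{LinearSysOff_Second} has coefficient matrix $\boldsymbol{K}\boldsymbol{G}(h)$ with $\boldsymbol{G}(h)$ diagonal and non-singular for $h$ small enough (as argued in the proof of \cref{OffdiagThirdThm}), the conclusion about unique solvability follows at once. I do not anticipate a serious obstacle: all the substantive work has already been done in \cref{StructureLemma_off,StructureThm_off}, and what remains is essentially an evaluation together with the trivial verification that the chosen nodes $i^2$ are distinct and non-zero.
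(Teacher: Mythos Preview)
Your proposal is correct and follows essentially the same approach as the paper: reduce to non-singularity of $\boldsymbol{E}$ via \cref{StructureLemma_off}, identify $\boldsymbol{E}$ with $\boldsymbol{T}(1^2,\dots,(p-1)^2)$, and invoke \cref{StructureThm_off} together with the fact that the nodes $i^2$ are distinct and non-zero. Your write-up is slightly more explicit (you spell out the product and the $\boldsymbol{G}(h)$ factor), but the argument is the same.
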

\begin{proof}
    We only need to prove $\boldsymbol{E}$ is non-singular, according to \cref{StructureLemma_off}. Letting $x_i = i^2,\ 1\leq i\leq p-1$, we have \begin{align}
        \boldsymbol{E} = \boldsymbol{F}(1^2,\dots,(p-1)^2).
    \end{align} Since all $x_i$'s are non-zeros and mutually distinct, therefore  \cref{StructureThm_off} implies $\boldsymbol{E}$ is non-singular and the proof is complete.
\end{proof}
\begin{remark}\label{Rmk_on_MarinTornburg_prob}
By combining \cref{StructureThm} and \cref{K_is_non_singular_off}, with a few modifications one can show the linear systems (2.13) in \cite{MarinTornberg2014} corresponding to 2D weakly singular integral with singular kernel $s(x) = \sfrac{1}{|x|}$ is non-singular for arbitrary $p$. Consequently, this leads to a unconditional proof of Theorem 3.7 in \cite{MarinTornberg2014}.
\end{remark}

\section{Numerical Results} \label{sectionNR}
\subsection{Computation of Correction Weights}
In order to use the modified trapezoidal rules \cref{FirstCorrectedTrapezRule,,CorrectedTrapz_off_diag}, one needs accurate values of weights $\bar{\omega}_\beta$'s. In this section, we describe numerical method for computing $\bar{\omega}_\beta$'s.
\subsubsection{On-diagonal case}
We present the computation of the correction weights for the grids $\mathcal{I}_0$, $\mathcal{I}_1$ and $\mathcal{I}_2$ in \cref{Ip_on}. We solve the linear system \cref{MatrixLinearSystem3} in the proof of \cref{MainTheorem} to compute correction weight. Compared with the linear system \cref{MatrixLinearSystem}, it has the advantage of evaluating less limits as $h\to 0$. To find out $\mathfrak{C}(\alpha)$, which is the RHS of the system \cref{MatrixLinearSystem3}, we need $c(h)$ in \cref{RHSofLinearSystem}. We choose a radially symmetric, practically compactly supported function \begin{align}
    g(x) = \exp(-|x|^k), \label{eq:pract C_c}
\end{align} with $k = 6$. The reason that we choose $g$ as in \cref{eq:pract C_c}, rather than truly compactly supported one is that we can analytically evaluate the integral \begin{align*}
    \int_{\mathbb{R}^2\setminus\{0\}}g(x)s(x)x^{2\xi}\ \d x = \frac{2\Gamma(1.5 + \xi_1)\Gamma(0.5+\xi_2)\Gamma(\sfrac{(2-\alpha+2\xi_1+2\xi_2)}{k})}{k\Gamma(2+\xi_1+\xi_2)},
\end{align*} for all $0<\alpha<2$ and $\xi\in\mathcal{I}_2$. Combining \cref{MatrixLinearSystem3} with \cref{Taylor_of_c(h)_on} we have \begin{align}
    \boldsymbol{K}\bar{\omega} = \mathfrak{C}(\alpha) = c(h) + h^{6}\sum_{j=0}^{J}C_j h^{2j}. \label{eq:Compute weights by Richardson}
\end{align} \Cref{eq:Compute weights by Richardson} enables us to apply twice the Richardson extrapolation to find out $\mathfrak{C}(\alpha)$:\begin{align*}
    c_1(h) = c(\sfrac{h}{2}) + \frac{c(\sfrac{h}{2}) - c(h)}{4^{3}-1},\quad c_2(h) = c_1(\sfrac{h}{2}) + \frac{c_1(\sfrac{h}{2}) - c_1(h)}{4^{4}-1}.
\end{align*} We obtain the weights with 20 correct digits by solving $\bar{\omega} \approx \boldsymbol{K}^{-1}c_2(h)$ and ensuring $|c_2(h) - c_2(\sfrac{h}{2})|<10^{-21}$ for some small enough $h$. In practice, we found $h = \frac{1}{32}$ sufficiently small to give more than 20 correct digits. \cref{CorrectionWeightsDiagonal1} and \cref{CorrectionWeightsDiagonal2} present the correction weights corresponding to the singular kernel $s$ in \cref{SingularFunctionGeneral} when $\alpha = 0.5,1.5$ and $p=0,1,2$, respectively.
\begin{table}[h]
\caption{The on-diagonal correction weights for $\alpha = 0.5$.}
\begin{center}
\begin{tabular}{ c|c|c } 
\hline\hline
 $p$ & Grid points & Correction weights \\
  \hline
   0 & $\mathcal{G}_{0,0} = \{(0,0)\}$ & $\omega_{0,0} = 9.608446105899650591\times 10^{-1}$ \\
  \hline
   \multirow{3}{*}{1} & $\mathcal{G}_{0,0} = \{(0,0)\}$ & $\omega_{0,0} = 9.2275199269460481567\times 10^{-1}$ \\ 
   & $\mathcal{G}_{1,0} = \{(\pm 1,0)\}$ & $\omega_{1,0} = -3.8305792599451481531\times 10^{-2}$ \\ 
   & $\mathcal{G}_{0,1} = \{(0,\pm 1)\}$ & $\omega_{0,1} = 5.7352101547131603247\times 10^{-2}$\\ 
  \hline
  \multirow{6}{*}{2}   & $\mathcal{G}_{0,0} = \{(0,0)\}$ & $\omega_{0,0} = 0.91354757991861649779\times 10^{-1}$ \\ 
  & $\mathcal{G}_{1,0} = \{(\pm 1,0)\}$ & $\omega_{1,0} = -4.9714459296827069288\times 10^{-2}$ \\ 
   & $\mathcal{G}_{0,1} = \{(0,\pm 1)\}$ & $\omega_{0,1} = 7.3324618127490001511\times 10^{-2}$ \\ 
    & $\mathcal{G}_{2,0} = \{(\pm 2, 0)\}$ & $\omega_{2,0} = 2.2625071864653714109\times 10^{-3}$ \\
   &$\mathcal{G}_{1,1} = \{(\pm 1,\pm 1),(\mp 1,\pm 1)\}$ & $\omega_{1,1} = 1.1793189757570510571\times 10^{-3}$ \\
   & $\mathcal{G}_{0,2} = \{(0,\pm 2)\}$ & $\omega_{0,2} = -4.5827886329681250944\times 10^{-3}$ \\
 \hline\hline
\end{tabular}
\end{center}
\label{CorrectionWeightsDiagonal1}
\end{table}
\begin{table}[h]
\caption{The on-diagonal correction weights for $\alpha = 1.5$.}
\begin{center}
\begin{tabular}{ c|c|c } 
\hline\hline
 $p$ & Grid points & Correction weights \\
  \hline
   0 & $\mathcal{G}_{0,0} = \{(0,0)\}$ & $\omega_{0,0} = 5.0387797393965760507\times 10^{0}$ \\
  \hline
  \multirow{3}{*}{1}   & $\mathcal{G}_{0,0} = \{(0,0)\}$ & $\omega_{0,0} = 4.7857569346819649328\times 10^{0}$ \\ 
   & $\mathcal{G}_{1,0} = \{(\pm 1,0)\}$ & $\omega_{1,0} = 1.0971059048869895449\times 10^{-2}$ \\ 
    & $\mathcal{G}_{0,1} = \{(0,\pm 1)\}$ & $\omega_{0,1} = 1.1554034330843566347\times 10^{-1}$\\ 
  \hline
  \multirow{6}{*}{2}   & $\mathcal{G}_{0,0} = \{(0,0)\}$ & $\omega_{0,0} = 4.7305900462046469972\times 10^{0}$ \\ 
    & $\mathcal{G}_{1,0} = \{(\pm 1,0)\}$ & $\omega_{1,0} = 1.7018648395611181367\times 10^{-2}$ \\ 
   & $\mathcal{G}_{0,1} = \{(0,\pm 1)\}$ & $\omega_{0,1} = 1.3848756814856511801\times 10^{-1}$ \\ 
    & $\mathcal{G}_{2,0} = \{(\pm 2, 0)\}$ & $\omega_{2,0} = -4.4305641359382777203\times 10^{-3}$ \\
   &$\mathcal{G}_{1,1} = \{(\pm 1,\pm 1),(\mp 1,\pm 1)\}$ & $\omega_{1,1} = 5.8373335985059124819\times 10^{-3}$ \\
   & $\mathcal{G}_{0,2} = \{(0,\pm 2)\}$ & $\omega_{0,2} = -8.6554730092853198753\times 10^{-3}$ \\
 \hline\hline
\end{tabular}
\end{center}
\label{CorrectionWeightsDiagonal2}
\end{table}
\subsubsection{Off-diagonal Case}
We present correction weights for grids $\mathcal{I}_2$, $\mathcal{I}_3$ and $\mathcal{I}_4$ in \cref{I_p off}. In off-diagonal case, the procedure for computing correction weights is similar with that of the on-diagonal case described above. We choose $g$ as in \cref{eq:pract C_c} with $k = 8$. The analytical values of the integrals are \begin{align*}
    \int_{\mathbb{R}^2\setminus\{0\}}g(x)s(x)x^{2\xi-(1,1)}\ \d x = \frac{2\Gamma(0.5 + \xi_1)\Gamma(0.5+\xi_2)\Gamma(\sfrac{(2\xi_1+2\xi_2-\alpha)}{k})}{k\Gamma(1+\xi_1+\xi_2)},
\end{align*} for all $0<\alpha<2$ and $\xi\in\mathcal{I}_4$. \Cref{CorrectionWeightsOff_0.5} and \cref{CorrectionWeightsOff_1.5} present the correction weights corresponding to singular kernel in \cref{eq:off singular kernel} for $\alpha = 0.5,1.5$ and $p=2,3,4$, respectively.

\begin{table}
\caption{The off-diagonal corrections weights for $\alpha = 0.5$.}
\begin{center}
\begin{tabular}{ c|c|c } 
\hline\hline
 $p$ & Discretization points & Correction weights \\
  \hline
   2 & $\mathcal{G}_{1,1} = \{(\pm 1,\pm 1),(\mp 1,\pm 1)\}$ & $\omega_{1,1} = 0.0286760507735658016236634025724$ \\
  \hline
  \multirow{3}{*}{3}  & $\mathcal{G}_{1,1} = \{(\pm 1,\pm 1),(\mp 1,\pm 1)\}$ & $\omega_{1,1} = 0.0470072053054383020013851917611$ \\ 
    & $\mathcal{G}_{2,1} = \{(\pm \mp 2,\pm \pm 1), (\pm \mp 1,\pm \pm 2)\}$ & $\omega_{2,1} = -0.00458278863296812509443044729718$ \\ 
  \hline
  \multirow{4}{*}{4}   & $\mathcal{G}_{1,1} = \{(\pm 1,\pm 1),(\mp 1,\pm 1)\}$ & $\omega_{1,1} = 0.058498692309201978109$ \\ 
   & $\mathcal{G}_{2,1} = \{(\pm \mp2,\pm \pm1),(\pm \mp1,\pm \pm2)\}$ & $\omega_{2,1} = -0.0092844902620645196084$ \\ 
    & $\mathcal{G}_{3,1} = \{(\pm \mp3,\pm \pm1),(\pm \mp1,\pm \pm3)\}$ & $\omega_{3,1} = 0.0010440418727854435399$ \\ 
    & $\mathcal{G}_{2,2} = \{(\pm 2, \pm 2),(\mp 2, \pm 2)\}$ & $\omega_{2,2} = 0.00026276706897731017725$ \\
 \hline\hline
\end{tabular}
\end{center}
\label{CorrectionWeightsOff_0.5}
\end{table}

\begin{table}
\caption{The off-diagonal corrections weights for $\alpha = 1.5$.}
\begin{center}
\begin{tabular}{ c|c|c } 
\hline\hline
 $p$ & Discretization points & Correction weights \\
  \hline
   2 & $\mathcal{G}_{1,1} = \{(\pm 1,\pm 1)\}$ & $\omega_{1,1} = 0.0577701716542178317339761161235$ \\
  \hline
  \multirow{2}{*}{3}  & $\mathcal{G}_{1,1} = \{(\pm \mp 1,\pm 1)\}$ & $\omega_{1,1} = 0.0923920636913591112353501723599$ \\ 
    & $\mathcal{G}_{2,1} = \{(\pm 2,\pm 1), (\pm 1,\pm 2)\}$ & $\omega_{2,1} = -0.0086554730092853198753435140591$ \\ 
  \hline
  \multirow{4}{*}{4}  & $\mathcal{G}_{1,1} = \{(\pm 1,\pm 1)\}$ & $\omega_{1,1} = 0.11372612810258708544$ \\ 
   & $\mathcal{G}_{2,1} = \{(\pm 2,\pm 1),(\pm 1,\pm 2)\}$ & $\omega_{2,1} = -0.017474957624915655234$ \\ 
    & $\mathcal{G}_{3,1} = \{(\pm 3,\pm 1),(\pm 1,\pm 3)\}$ & $\omega_{3,1} = 0.0018475475899836517452$ \\ 
    & $\mathcal{G}_{2,2} = \{(\pm 2, \pm 2),(\mp 2, \pm 2)\}$ & $\omega_{2,2} = 0.00071464712784786418872$ \\
 \hline\hline
\end{tabular}
\end{center}
\label{CorrectionWeightsOff_1.5}
\end{table}
\subsection{Order of Convergence of \texorpdfstring{$Q_h^p$}{TEXT}}
\subsubsection{On-diagonal case}
We verify the order of convergence of the corrected trapezoidal rule \cref{SecondCorrectedTrapezRule} for the on-diagonal weakly singular integral $I_{11}$ in \cref{singularint}. We choose a compactly supported function \begin{align}
    \phi(x) &= (1+x_1 + x_1^2)(1+x_2+x_2^2)\big((1-x_1^2)_+(1-x_2^2)_+\big)^7, 
\end{align} where $x_+\coloneqq \max\{x,0\}$. Note that $\phi\in C_c^6(\mathbb{R}^2)$ such that $\partial^k\phi(0)\not = 0,\ \forall k\in\mathbb{N}_0^2:|k|_1\leq 6$ and $supp(\phi) = [-1,1]^2$. We use $\phi$ to test \cref{MainTheorem} for $p=0,1,2$. Note that due to limited smoothness of this $\phi$, \cref{MainTheorem} guarantees the order of convergence $2p+4-\alpha$ for $p=0,1$, but is inconclusive for $p=2$. However we still observe the high order of accuracy. This implies there is room for further relaxation on smoothness criteria on $\phi$. The analytical integral $I = \int_{\mathbb{R}^2\setminus\{0\}}\phi(x)s(x)\ \d x$ for arbitrary $0<\alpha<2$ can be found by Mathematica. However the resulting formula
is extremely long, therefore we only evaluate the integral for $\alpha = 0.5,\ 1.5$. We evaluate $|I - Q_h^p|$ for different values of $h$, where $I$ denotes the true values, and perform linear regressions in log-log plots to find out the order of convergence, as shown in \cref{fig:On_diag_order}. We find that the numerical results in general match very well with theoretically predicted order of accuracy $2p+4-\alpha$ for all $p = 0,1,2$ and $\alpha = 0.5,1.5$. When the value of $p$ becomes large and $h$ becomes small, the round-off errors dominate. In this case, multi-precision will help when one chooses large $p$.


\begin{figure}[h]
    \centering
    \subfloat[$\alpha = 0.5$]{\includegraphics[width = 6.5cm]{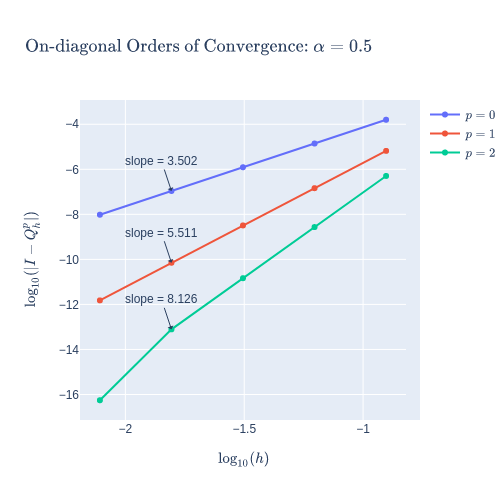}}
    \subfloat[$\alpha = 1.5$]{\includegraphics[width = 6.5cm]{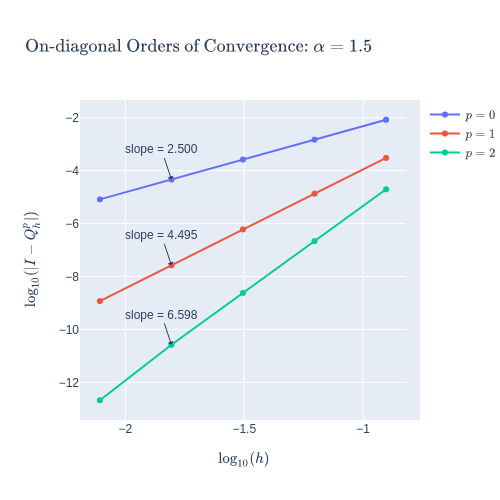}}
    \caption{Numerical orders of accuracy: Log-log plots of the quadrature error against $h$ for the modified trapezoidal rule \cref{SecondCorrectedTrapezRule} corresponding to the on-diagonal singular kernel $s(x) = \frac{x_1^2}{|x|^{2+\alpha}}$ and $p=0,1,2$, (a) $\alpha = 0.5$, (b) $\alpha = 1.5$.}
    \label{fig:On_diag_order}
\end{figure}





\subsubsection{Off-diagonal case}
We verify the order of convergence of the corrected trapezoidal rule \cref{CorrectedTrapz_off_diag} for the off-diagonal weakly singular integral $I_{12}$ in \cref{singularint}, we choose a compact supported function \begin{align}
    \phi(x) &= (1+x_1)(1+x_2)\big((1-x_1^2)_+(1-x_2^2)_+\big)^7, 
\end{align} Note that $\phi\in C_c^6(\mathbb{R}^2)$ and $supp(\phi) = [-1,1]^2$. We use $\phi$ to test \cref{OffdiagThirdThm} for $p=1,2,3$. Note that due to limited smoothness of this $\phi$, \cref{OffdiagThirdThm} guarantees the order of accuracy of $2p+2-\alpha$ for $p=1,2$, but is inconclusive for $p=3$. However we still observe the  high order of accuracy. This implies there are room for further relaxation on smoothness criteria on $\phi$. The true value of the integral $ I = \int_{\mathbb{R}^2\setminus\{0\}}\phi(x)s(x)\ \d x$ for $\alpha = 0.5,1.5$ can be found by Mathematica. We evaluate $|I - Q_h^p|$ for different values of $h$, where $I$ denotes the true values and perform linear regressions in log-log plots to find out the order of convergence, as shown in \cref{fig:Off_diag_order}. We find that the numerical results in general match very well with the theoretically predicted order of accuracy $2p+2-\alpha$ for $p = 1,2,3$ and $\alpha = 0.5,1.5$.


\begin{figure}[h]
    \centering
    \subfloat[$\alpha = 0.5$]{\includegraphics[width = 6.5cm]{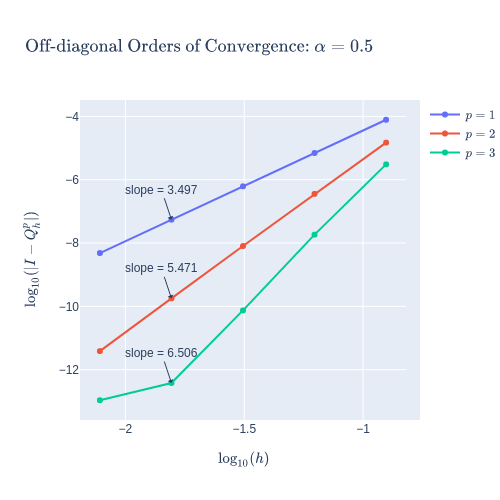}}
    \subfloat[$\alpha = 1.5$]{\includegraphics[width = 6.5cm]{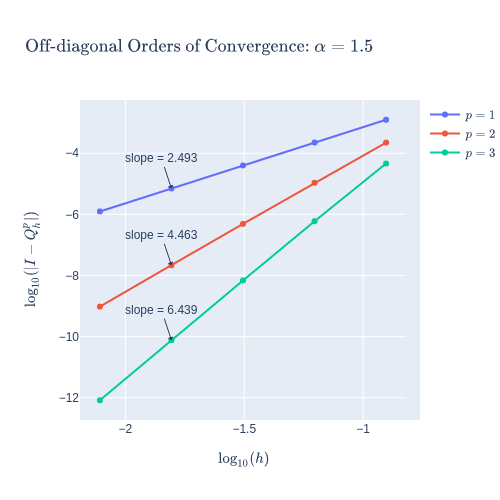}}
    \caption{Numerical orders of accuracy: Log-log plots of the quadrature error against $h$ for the modified trapezoidal rule \cref{CorrectedTrapz_off_diag} corresponding to the off-diagonal singular kernel $s(x) = \frac{x_1x_2}{|x|^{2+\alpha}}$ and $p=1,2,3$, (a) $\alpha = 0.5$, (b) $\alpha = 1.5$.}
    \label{fig:Off_diag_order}
\end{figure}



\section{Conclusion}
We proposed an arbitrarily high-order modified trapezoidal rules for both on-diagonal and off-diagonal weakly singular integrals $I_{i,j} = \int_{\mathbb{R}^2}\phi(x)\frac{x_ix_j}{|x|^{2+\alpha}}\d x$, $i,j\in\{1,2\}$ with some sufficient smooth function $\phi$ with compact support. The quadrature rule is a punctured-hole trapezoidal rule with correction terms. We proved the order of accuracy of the modified quadrature given the number of correction layers. We focused on error due to singularity in this paper. The quadrature rule can be combined with any boundary error correction for regular functions $\phi$ without compact support to attain high-order convergence. The correction weights can be pre-computed and stored for future use. We tabulated correction weights in more-than-17 digits in scientific form for $\alpha = 0.5, 1.5$ and for both on-diagonal and off-diagonal case. We provided theoretical guarantee that the quadrature rule works with arbitrary $p$, the number associated with correction layers, though we do not recommend to use large $p$ since numerical evidence suggested that linear systems \cref{MatrixLinearSystem} and \cref{LinearSysOff_Second} become increasingly ill-conditioned for large $p$. By using the method suggested in \cref{Rmk_on_MarinTornburg_prob}, one can prove the non-singularity left in the work of \cite{MarinTornberg2014}. For future works, one can further relax the smoothness criteria for $\phi$ in \cref{SecondMainThm,,OffdiagThirdThm}. Alternatively, the modified trapezoidal rule is of course not confined to singular kernels described here. With proper modification, one can easily adapt this rule to other singular kernels in 2D such as $x_1\log{(|x|)}$ or $\frac{P(x_1,x_2)}{|x|^{\sfrac{(2k+1)}{2}}}$ where $P(x_1,x_2)$ is a irreducible polynomial in $x_1,x_2$ with degree $k$. Another possible direction is to generalize the quadrature rule to arbitrary $n$ dimensions, as some results in this paper are already set up in $n$ dimensions.

\bibliographystyle{plain}
\bibliography{main}

\end{document}